\newcommand{\bC}{\mathbb{C}}
\newcommand{\bF}{\mathbb{F}}
\newcommand{\bG}{\mathbb{G}}
\newcommand{\bQ}{\mathbb{Q}}
\newcommand{\bR}{\mathbb{R}}
\newcommand{\bZ}{\mathbb{Z}}
\newcommand{\unit}{\mathbf{1}}
\newcommand{\fg}{\mathfrak{g}}
\newcommand{\uA}{\underline{A}}
\newcommand{\simto}{\overset{\sim}{\to}}
\newcommand{\Aut}{\operatorname{Aut}}
\newcommand{\uAut}{\operatorname{\underline{Aut}}}
\newcommand{\End}{\operatorname{End}}
\newcommand{\Hom}{\operatorname{Hom}}
\newcommand{\uHom}{\operatorname{\underline{Hom}}}
\newcommand{\id}{\operatorname{id}}
\newcommand{\Ind}{\operatorname{Ind}}
\newcommand{\Lie}{\operatorname{Lie}}
\newcommand{\rank}{\operatorname{rank}}
\newcommand{\Spec}{\operatorname{Spec}}
\newcommand{\Sym}{\operatorname{Sym}}
\newtheorem{thm}{Theorem}[subsection]
\newtheorem{prop}[thm]{Proposition}
\newtheorem{lem}[thm]{Lemma}
\newtheorem{cor}[thm]{Corollary}
\theoremstyle{definition}
\newtheorem{defn}[thm]{Definition}
\theoremstyle{remark}
\newtheorem{rem}[thm]{Remark}
\begin{document}

\title{Automorphism group schemes of lattice vertex algebras}
\author{Scott Carnahan, Hayate Kobayashi}

\begin{abstract}
Given a positive definite even lattice and a commutative ring, there is a standard construction of a lattice vertex algebra over the commutative ring, and it admits a natural grading by non-negative integers.  We describe the groups of automorphisms of these graded vertex algebras as affine group schemes, showing in particular that each is an extension of an explicitly described split reductive group of ADE type by the outer automorphism group of the lattice.
\end{abstract}

\maketitle

\tableofcontents

\subsection*{Introduction}

Lattice vertex operator algebras, or more precisely, some of the structures underlying them, were the original motivation for the development of vertex algebras in \cite{B86}, and a fundamental step toward the construction of the monster vertex operator algebra $V^\natural$ in \cite{FLM88}.  Physically, lattice vertex operator algebras describe the propagation of bosonic strings in spacetime tori, and they form a rich family of examples with many interesting symmetry groups.  In particular, the symmetry groups of the lattice vertex operator algebras attached to certain Niemeier lattices were essential to the proof that the monster simple group acts faithfully on $V^\natural$.  For these reasons, it is natural to study the symmetries of lattice vertex operator algebras in general.  The full symmetry group of an arbitrary lattice vertex operator algebra was described in \cite{DN98} for the case where the base ring is the field of complex numbers.  In this paper, we generalize their results to determine the groups of symmetries of lattice vertex operator algebras over arbitrary commutative rings.

There are several challenges to overcome when doing this generalization, but perhaps the most prominent is that, because we are working over general commutative rings, we are forced to use the language of group schemes instead of the traditional tools of complex Lie group theory. This linguistic barrier can be substantial for mathematicians whose work is not ``scheme-adjacent''. Fortunately, we do not have to wade far into the depths of the theory - we need sheaf-theoretic arguments on a few occasions, but we never need to leave the world of affine schemes.

The statement of Dong-Nagatomo's theorem is that if $L$ is a positive-definite even lattice, then the automorphism group of the lattice vertex operator algebra $V_{L,\bC}$ is a product of a certain Lie group $N$ with a finite group $O(\hat{L})$.  They also show that the intersection $N \cap O(\hat{L})$ contains the 2-torsion group $\Hom(L,\pm 1)$.  Our result (Theorem \ref{thm:main}) replaces $N$ with a split reductive algebraic group $G_L$ defined over $\bZ$, and the finite group $O(\hat{L})$ with a finite flat group scheme $O(\tilde{L})$.  It should be unsurprising that the complex analytification of $G_L$ is isomorphic to $N$, and that $O(\tilde{L})(\bC) \cong O(\hat{L})$, so we obtain Dong-Nagatomo's result by specialization.  We also determine the intersection $G_L \cap O(\tilde{L})$ precisely, identifying it with the ``Tits group'' of $G_L$, and use this to show that the quotient $\Aut V_L/G_L$ is isomorphic to the outer automorphism group $\Aut L/W_L$ of the lattice, i.e., the quotient of all orthogonal transformations by the subgroup generated by reflections.

In addition to these generalizations, we remove the hypothesis that $V_L$ has a conformal vector, instead considering homogeneous vertex algebra automorphisms.  This effectively means we allow ourselves to consider commutative rings $R$ where the determinant of $L$ is not invertible.  The fact that we still get a uniform description of the automorphism group is somewhat surprising, since $V_L$ is no longer a simple vertex algebra when $\det L = 0$ in $R$. In particular, there may be quotients with exceptionally large symmetry.  One concrete example is the $A_2$ lattice over $\bF_3$, whose vertex algebra has a simple quotient with automorphism group $G_2(3)$: see \cite{GL13}, Proposition 6.1 and Theorem 6.3 \footnote{We thank C. H. Lam for bringing this example to our attention.}.

Viewed from a suitable distance, our proof is more or less a translation of Dong-Nagatomo's proof, with some necessary adjustments. One such adjustment lies in the construction of $G_L$. Dong and Nagatomo construct $N$ by simply exponentiating the weight 1 Lie algebra of $V_{L,\bC}$, but reductive algebraic groups in general cannot be constructed by exponentiating a Lie algebra.  Instead, we construct $G_L$ by describing its torus $T = \uHom(L, \bG_m)$ separately from its one-dimensional root subgroups $U_\alpha$, then showing that these subgroups satisfy the appropriate relations.

The main sticking point for us was the conjugacy of tori: over an algebraically closed field, all Cartan subgroups of a smooth affine group scheme are conjugate to each other, but this doesn't hold for general commutative rings.  The standard example is $SL_2(\bR)$, in which the diagonal torus is not conjugate to $SO(2)$. Since the maximal tori we consider are split, there are fairly strong replacement results: Proposition 6.2.11 in \cite{C14} asserts that if $G$ is split reductive over a nonempty connected scheme $S$ with trivial Picard group, then all split maximal tori are conjugate.  It may be possible to turn this result into a solution, but we found it more straightforward to use \'etale-local conjugacy of all maximal tori. Using the \'etale sheaf property of affine schemes, we find that this is sufficient to show that all automorphisms are covered.

\subsection*{Overview of the paper}

In section 1, we introduce vertex algebras and the lattice vertex algebra construction over commutative rings.  We show that if $\det L$ is invertible in a commutative ring $R$, then the standard conformal vector over $\bQ$ extends to the lattice vertex algebra over $R$.

In section 2, we introduce some basic theory of reductive group schemes, and define the group $G_L$.  It is a split reductive group over $\bZ$, and it acts faithfully on $V_L$.

In section 3, we define the finite flat group scheme $O(\tilde{L})$, and show that it acts faithfully on $V_L$.

In section 4, we analyze the automorphism group scheme of $V_L$, and prove the main theorem.

In section 5, we propose some generalizations.

\subsection*{Acknowledgements}

S. C. would like to thank Robert Griess and Ching-Hung Lam for helpful discussions, and Arturo Pianzola for helpful comments on an earlier version of this paper. This work was supported by JSPS Kakenhi Grant-in-Aid for Young Scientists (B) 17K14152, and JSPS Kakenhi Grant-in-Aid for Scientific Research (C) 22K03264

\section{Review of vertex algebras over commutative rings}

\subsection{Vertex algebras}

\begin{defn}
A \textbf{vertex algebra} over a commutative ring $R$ is an $R$-module $V$ equipped with a distinguished vector $\unit \in V$ and a multiplication map $Y: V \otimes_R V \to V((z))$, written as $a \otimes b \mapsto Y(a,z)b = \sum_{n \in \bZ} a_n b z^{-n-1}$, satisfying the following conditions:
\begin{enumerate}
\item $Y(\unit,z) = id_V z^0$ and $Y(a,z)\unit \in a + zV[[z]]$.
\item For any $r,s,t \in \bZ$, and any $u,v,w \in V$, the `Borcherds identity' holds:
\[ \sum_{i \geq 0} \binom{r}{i} (u_{t+i} v)_{r+s-i} w = \sum_{i \geq 0} (-1)^i \binom{t}{i} (u_{r+t-i}(v_{s+i}w) - (-1)^t v_{s+t-i}(u_{r+i} w)) \] 
\end{enumerate}
A vertex algebra homomorphism $(V, \unit_V, Y_V) \to (W, \unit_W, Y_W)$ is an $R$-module homomorphism $\phi: V \to W$ satisfying $\phi(\unit_V) = \unit_W$ and $\phi(u_n v) = \phi(u)_n \phi(v)$ for all $u,v \in V$ and all $n \in \bZ$.  Given a vertex algebra $V$ over $R$ and a vertex algebra $W$ over $S$, we say that $V$ is an $R$-form of $W$ with respect to a ring homomorphism $\phi: R \to S$ if $V \otimes_{R,\phi} S$ is isomorphic to $W$ as a vertex algebra over $S$.  If $\phi$ is implicitly fixed, we will simply say that $V$ is an $R$-form of $W$.  
\end{defn}

\begin{rem}
This definition of vertex algebra is equivalent to many others.  For example, one may replace the Borcherds identity with its generating function version, the ``Jacobi identity'':
\[ \begin{aligned}
x^{-1} \delta\left(\frac{y-z}{x}\right) &Y(a,y) Y(b,z) - x^{-1} \delta\left(\frac{z-y}{-x}\right) Y(b,z) Y(a,y) =  \\
&= z^{-1} \delta\left(\frac{y-x}{z}\right) Y(Y(a,x)b,z),
\end{aligned} \]
where $\delta(z) = \sum_{n \in \bZ} z^n$, and $\delta\left(\frac{y-z}{x}\right)$ is expanded as a formal power series with non-negative powers of $z$, i.e., as $\sum_{n \in \bZ, m \in \bZ_{\geq 0}} (-1)^m \binom{n}{m} x^{-n}y^{n-m}z^m$.  Alternatively, one may replace it with the combination of one of:
\begin{itemize}
\item (commutator formula) For any $u,v \in V$, and any $m,n \in \bZ$,
\[ [u_m, v_n] = \sum_{k \geq 0} \binom{m}{k} (u_k v)_{m+n-k}, \]
or its equivalent $[u_m,Y(v,z)] = \sum_{k \geq 0} \binom{m}{k} Y(u_k v,z) z^{m-k}$
\item (locality) For any $u,v \in V$, there exists $N \geq 0$ such that $(x-y)^N [Y(u,x),Y(v,y)] = 0$.
\item (skew-symmetry) For any $u,v \in V$ and any $n \in \bZ$, $v_n u = (-1)^{n+1} \sum_{i \geq 0} (-1)^i L_{-1}^{(i)}(u_{n+i}v)$. 
\end{itemize}
and one of
\begin{itemize}
\item (associativity) $(u_t v)_s w = \sum_{i \geq 0} (-1)^i \binom{t}{i} (u_{t-i}(v_{s+i}w) - (-1)^t v_{s+t-i}(u_i w))$
\item (weak associativity) For any $u,v, w \in V$, $(x + y)^NY(Y(u,x)v, y)w = (x+y)^N Y(u,x + y)Y(v,y)w$ for sufficiently large $N$.
\item (translation-covariance) $[T^{(m)}, Y(u,z)] = \sum_{i=1}^m \partial^{(i)}Y(u,z) T^{(m-i)}$, where $T^{(m)} : V \to V$ is the linear map $u \mapsto u_{-n-1}\unit$.
\end{itemize}
Proofs can be easily translated from the usual equivalences over $\bC$ found in, e.g., \cite{LL04} and \cite{MN97}.  We also note that the condition $Y(\unit,z) = id_V z^0$ is redundant: see Theorem 2.7 in \cite{M17} for a surprisingly long proof.
\end{rem}

\begin{defn}
A $\bZ$\textbf{-graded vertex algebra} over a commutative ring $R$ is a $\bZ$-graded $R$-module $V = \bigoplus_{n \in \bZ} V_n$, with a vertex algebra structure $(\unit, Y)$ on the underlying $R$-module $V$, such that the following grading-compatibility condition is satisfied:
\begin{itemize}
\item For any $k,m,n \in \bZ$, and vectors $u \in V_k, v \in V_m$, we have $u_n v \in V_{k+m-n-1}$.
\end{itemize}
A homomorphism of $\bZ$-graded vertex algebras is a vertex algebra homomorphism that preserves degrees.
\end{defn}
                                             
We note that since $\unit_n = \delta_{n,-1}\id_V$, we necessarily have $\unit \in V_0$ (see \cite{M17} Lemma 7.9 for details).

\begin{defn}
A \textbf{vertex operator algebra} over a commutative ring $R$ with \textbf{central half-charge} $c \in R$ is a $\bZ$-graded vertex algebra $V = \bigoplus_{n \in \bZ} V_n$, together with a conformal vector $\omega \in V$ with coefficients $Y(\omega,z) = \sum_{i \in \bZ} L_i z^{-i-2}$.  These data must satisfy the following:
\begin{enumerate}
\item $L_0$ acts diagonalizably with integer eigenvalues.
\item For all $n \in \bZ$, $v \in V_n$, we have $L_0 v = nv$.
\item There is some $N \in \bZ$ such that $V_n = 0$ for $n < N$.
\item Each $V_n$ is a finite type projective $R$-module.
\item For all $u \in V$, $L_{-1}u = u_{-2} \unit$
\item For all $m,n \in \bZ$, $[L_m, L_n] = (m-n)L_{m+n} + \binom{m+1}{3}c \id_V$.
\end{enumerate}
A homomorphism of vertex operator algebras over $R$ is a vertex algebra homomorphism that respects both the $\bZ$-grading, and the conformal vectors.
\end{defn}

When $2$ is invertible in $R$, it is common to use the ``central charge'', which is $2$ times the central half-charge.

\begin{defn}
Given a $\bZ$-graded vertex algebra $V$ over a commutative ring $R$, the automorphism functor $\uAut_{V/R}$ takes any commutative ring $S$ over $R$ to the group $\uAut_{V/R}(S)$ of automorphisms of $V \otimes_R S$ as a $\bZ$-graded vertex algebra over $S$.
\end{defn}

\begin{rem}
We have multiple choices for defining $\uAut_{V/R}$: we could alternatively use vertex algebra automorphisms, or if $V$ is a vertex operator algebra, we could consider automorphisms that fix $\omega$.  We have chosen to require degree-preserving automorphisms, because the examples we consider have a Lie algebra structure on the weight 1 space, and this gives us strong control over the infinitesimal automorphisms.  However, we have chosen to not require conformal vectors, because we would like to construct a group scheme over $\bZ$, and the examples we consider do not always have conformal vectors over $\bZ$.
\end{rem}

\begin{lem} \label{lem:faithfully-flat-sheaf}
Let $V$ be a $\bZ$-graded vertex algebra over a commutative ring $R$.  Then, $\uAut_{V/R}$ is a group sheaf for the faithfully flat topology.
\end{lem}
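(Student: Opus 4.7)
The plan is to verify directly that for every faithfully flat $R$-algebra map $S \to S'$, the diagram
\[ \uAut_{V/R}(S) \to \uAut_{V/R}(S') \rightrightarrows \uAut_{V/R}(S' \otimes_S S') \]
is an equalizer of sets (the group structure then automatically becomes a morphism of sheaves). The whole argument rests on two elementary consequences of faithful flatness, applied componentwise: for every $S$-module $M$ the canonical map $M \to M \otimes_S S'$ is injective, and the longer sequence $0 \to M \to M \otimes_S S' \to M \otimes_S (S' \otimes_S S')$ (with the difference of the two coprojections as second map) is exact. Since tensor product commutes with direct sums we have $(V \otimes_R S)_n = V_n \otimes_R S$, and degree-preserving endomorphisms decompose as sums over $n$, so it suffices to work one graded piece at a time.

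Separation is immediate from the first fact: if two automorphisms of $V \otimes_R S$ become equal after base change to $S'$, then on each $V_n \otimes_R S$ their difference vanishes after tensoring with $S'$ and is therefore zero. For gluing, let $\phi \in \uAut_{V/R}(S')$ satisfy the cocycle condition. For each $n$, view $\phi_n$ as an $S$-linear map $\tilde\phi_n: V_n \otimes_R S \to V_n \otimes_R S'$ by restriction along the canonical inclusion. The cocycle identity says that the two $S$-linear composites $V_n \otimes_R S \rightrightarrows V_n \otimes_R (S' \otimes_S S')$ obtained from $\tilde\phi_n$ and the two coprojections of $S'$ into $S' \otimes_S S'$ agree, so the image of $\tilde\phi_n$ lies in the equalizer of those coprojections, which by the second fact is exactly $V_n \otimes_R S$. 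The resulting maps $\psi_n: V_n \otimes_R S \to V_n \otimes_R S$ assemble into a graded $S$-linear endomorphism $\psi$ of $V \otimes_R S$ whose base change is $\phi$.

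It remains to check that $\psi$ is a vertex algebra automorphism. The identities $\psi(\unit) = \unit$ and $\psi(u_n v) = \psi(u)_n \psi(v)$ hold after base change to $S'$ because $\phi$ is one, and the injectivity of $V \otimes_R S \to V \otimes_R S'$ pulls them back. Running the same construction on $\phi^{-1}$ produces a two-sided inverse for $\psi$. The one mildly delicate point, and the step I would be most careful about, is avoiding any hypothesis of finite presentation on $V$: because the paper allows $V$ to be an arbitrary graded $R$-module, descent of the full module $V \otimes_R S$ is not the right tool. Working componentwise reduces the problem to descent of \emph{morphisms} between fixed $S$-modules, for which the fpqc descent sequence above is exact without any finiteness assumption.
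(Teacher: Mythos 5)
Your proof is correct, and at the outline level it matches the paper: both arguments verify the equalizer condition for a single faithfully flat ring map directly, and both reduce the gluing step to descent of \emph{morphisms} between fixed modules rather than descent of modules. The differences are in the implementation, and they mostly favor your version. First, you check the condition for an arbitrary faithfully flat map $S \to S'$ of $R$-algebras, which is what the sheaf axiom actually requires; the paper only writes out the case $R \to S$ (its argument does generalize verbatim, since $V \otimes_R S$ is again a graded vertex algebra over $S$, but your formulation is the precise one). Second, where you invoke the standard exactness of the Amitsur-type sequence $0 \to M \to M \otimes_S S' \to M \otimes_S (S' \otimes_S S')$ as a black box, the paper proves the special case it needs by hand: given $x$ with equal images under the two coprojections, it identifies $x(v \otimes s)$ inside the intersection of $V \otimes_R S \otimes_R Rs'$ and $V \otimes_R Rs \otimes_R S$ to conclude $x = x' \otimes \id$. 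Your citation of the standard descent sequence is cleaner and sidesteps the slightly delicate intersection claim the paper relies on; the paper's version is more self-contained. One remark on the point you flag as delicate: your caution about finite presentation is sound in spirit but the componentwise reduction is not actually needed for it, since the fpqc exact sequence above holds for an \emph{arbitrary} $S$-module $M$ (and even full fpqc descent of quasi-coherent modules needs no finiteness), so you could run your gluing argument with $M = V \otimes_R S$ in one stroke; the graded decomposition is harmless and does conveniently build in that the descended automorphism $\psi$ is degree-preserving, which the paper leaves implicit.
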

\begin{proof}
First, we show the sheaf property of the group-valued functor $\uAut_{V/R}$.  For any faithfully flat ring homomorphism $R \to S$, base change of automorphisms induces a diagram of group homomorphisms
\[ \xymatrix{\uAut_{V/R}(R) \ar[r]^f & \uAut_{V/R}(S) \ar@/^/[r]^-{g} \ar@/_/[r]_-{h} & \uAut_{V/R}(S \otimes_R S) } \]
Here, $g$ and $h$ take an $S$-automorphism $x$ of $V \otimes_R S$ and produce the automorphisms $g(x)$ and $h(x)$ of $V \otimes_R S \otimes_R S$ defined as follows:
\[ \begin{aligned}
g(x)(v \otimes s \otimes s') &= \sum_i v_i \otimes s_i \otimes s' \\
h(x)(v \otimes s \otimes s') &= \sum_j v_j \otimes s \otimes s'_j
\end{aligned} \]
where we write $x(v \otimes s) = \sum_i v_i \otimes s_i$ and $x(v \otimes s') = \sum_j v_j \otimes s'_j$.

We claim that this diagram is an equalizer, that is, $f$ is injective and the image of $f$ is equal to $\{ x \in \uAut_{V/R}(S) | g(x) = h(x) \}$.  Let $x,y \in \uAut_{V/R}(R)$ satisfy $f(x) = f(y)$.  Then for any $v \otimes s \in V \otimes_R S$ we have $x(v) \otimes s = y(v) \otimes s$.  Since $R \to S$ is injective, this implies $x(v) = y(v)$, so $f$ is injective.

For any $x\in \uAut_{V/R}(R)$ and any $v \otimes s \in V \otimes_R S$, $f(x)(v \otimes s) = x(v) \otimes s$, and both $g(f(x))$ and $h(f(x))$ take $v \otimes s \otimes s'$ to $x(v) \otimes s \otimes s'$.  Thus, the image of $f$ is contained in $\{ y \in \uAut_{V/R}(S) | g(y) = h(y) \}$.

We now prove the opposite containment.  Let $x \in \uAut_{V/R}(S)$ satisfy $g(x) = h(x)$, so with our earlier notation, $\sum_i v_i \otimes s_i \otimes s' = \sum_j v_j \otimes s \otimes s'_j$.  These elements lie in the intersection of $V \otimes_R S \otimes_R Rs'$ and $V \otimes_R Rs \otimes_R S$ in $V \otimes_R S \otimes_R S$, which is $V \otimes_R Rs \otimes_R Rs'$.  By shifting scalars in $R$ to the $V$ factor, we see that both $g(x)(v \otimes s \otimes s')$ and $h(x)(v \otimes s \otimes s')$ have the form $v' \otimes s \otimes s'$ for some $v' \in V$, so $x(v \otimes s) = v' \otimes s$.  We conclude that $x$ has the form $x' \otimes \id$ for some $x' \in \uAut_{V/R}(R)$.
\end{proof}

\begin{lem} \label{lem:represented-by-affine}
 Let $V$ be a $\bZ$-graded vertex algebra over a commutative ring $R$.  If $V$ is finitely generated, then $\uAut_{V/R}$ is represented by a finite type affine group scheme over $R$.
\end{lem}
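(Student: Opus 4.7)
The plan is to realize $\uAut_{V/R}$ as a closed subscheme of a finite type affine scheme parameterizing the possible images of a finite generating set. First I would choose homogeneous generators $v_1, \ldots, v_n$ of $V$ with $d_i := \deg v_i$ and verify that each graded piece $V_m$ is finitely generated as an $R$-module: under the implicit assumption of a bounded-below grading (or after arranging one), $V_m$ is spanned by the finitely many iterated Borcherds products of the $v_i$ whose total degree equals $m$. Since any automorphism $\phi \in \uAut_{V/R}(S)$ preserves the $\bZ$-grading and is determined by its values on the generators, one obtains a functorial injection
\[
\uAut_{V/R} \hookrightarrow \prod_{i=1}^n X_i,
\]
where $X_i$ is the finite type affine scheme representing the functor $S \mapsto V_{d_i} \otimes_R S$ (using finite generation, and projectivity where it is needed to ensure representability of the module functor).

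Next I would identify the image of this injection as a closed subfunctor cut out by polynomial conditions. A tuple $(w_i) \in \prod_i X_i(S)$ lies in the image iff (a) the assignment $v_i \mapsto w_i$ extends to a vertex algebra endomorphism $\tilde\phi$ of $V \otimes_R S$, and (b) $\tilde\phi$ is invertible. Condition (a) translates to the $w_i$ satisfying every vertex algebra relation satisfied by the $v_i$ in $V$; each relation, via the finite generation of $V_m$ in the relevant degree, becomes a polynomial identity in the coordinates on $\prod_i X_i$. Condition (b) can be imposed by embedding $\uAut_{V/R}$ inside $\underline{\End}_{V/R} \times \underline{\End}_{V/R}$ as the closed subfunctor of pairs $(\phi, \psi)$ satisfying $\phi\psi = \psi\phi = \id$, after first showing (by the identical argument) that $\underline{\End}_{V/R}$ is represented by a finite type affine scheme.

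The main obstacle is step (a): the relations are a priori indexed by the infinite family of triples $(i, j, k) \in \{1, \ldots, n\}^2 \times \bZ$, and in the non-Noetherian setting an infinite intersection of closed subschemes need not remain of finite type. The vertex algebra axioms should cut this family down to a finite one: locality forces $v_{i,k} v_j = 0$ for $k \gg 0$, leaving only finitely many nontrivial non-negative-mode relations per pair of generators; skew-symmetry and the translation operator $T$ then reduce the negative-mode relations to the non-negative ones. Combining these reductions with the finite generation of each $V_m$ should show that the defining ideal of relations is finitely generated over $R$, yielding the finite type conclusion.
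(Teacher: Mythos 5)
There is a genuine gap at the first step of your argument. You try to derive finite generation of each graded piece $V_m$ from finite generation of $V$ (plus a bounded-below grading), claiming $V_m$ is spanned by ``the finitely many iterated Borcherds products of the $v_i$ whose total degree equals $m$.'' Both the finiteness claim and the conclusion fail: infinitely many iterated products can land in a fixed degree, since a mode $u_n$ with $n<0$ can raise degree and later modes can lower it again --- or preserve it outright. Concretely, take $V = R[x,y]$ concentrated in degree $0$, with $Y(f,z)g = fg$: a commutative algebra with zero derivation is a vertex algebra, and the grading-compatibility condition $u_n v \in V_{k+m-n-1}$ holds because $u_n v = 0$ unless $n = -1$. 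This is a bounded-below $\bZ$-graded vertex algebra generated by $\{x,y\}$; the products $x_{-1}\cdots x_{-1}\unit = x^k$ all lie in degree $0$, and $V_0 = R[x,y]$ is not a finitely generated $R$-module. Worse, this example shows the statement is simply false without a hypothesis on the graded pieces: here $\uAut_{V/R}$ is the automorphism functor of the affine plane, which contains $y \mapsto y + f(x)$ for every polynomial $f$, hence subgroup functors $\bG_a^n$ for all $n$, and so cannot be represented by a finite type group scheme. No refinement of your counting can repair this; the paper instead \emph{assumes} the graded pieces are finite projective (its proof opens with ``the assumption that graded pieces are finite and projective\ldots''; this hypothesis appears in the vertex operator algebra definition and holds for the intended application $V_L$, whose pieces are finite free). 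Your own representability of the schemes $X_i$ needs exactly this projectivity anyway, as you acknowledge parenthetically. Once that hypothesis is granted, your construction --- record the images of homogeneous generators, cut out by the conditions of respecting multiplication, and handle invertibility via pairs of endomorphisms with $\phi\psi = \psi\phi = \id$ --- is essentially the paper's embedding into a product of general linear groups cut out by Zariski-closed conditions.

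Your ``main obstacle'' paragraph misdiagnoses the situation. Finite type means finitely generated as an $R$-algebra, and a quotient of $R[x_1,\ldots,x_N]$ by an \emph{arbitrary} ideal is still a finitely generated $R$-algebra; so imposing infinitely many polynomial relations, one per triple $(i,j,k)$, inside your finite type ambient scheme automatically yields a finite type closed subscheme, Noetherian hypotheses or not. An infinite family of relations would only threaten finite \emph{presentation}, which the lemma does not claim. So the proposed reduction to finitely many relations is unnecessary --- and also unsubstantiated as written: skew-symmetry expresses $v_n u$ in terms of the products $u_{n+i}v$ with the arguments swapped, not negative modes in terms of non-negative ones, and well-definedness of the prescribed extension must in any case be checked against a spanning set of each graded piece, of which there are infinitely many.
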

\begin{proof}
The assumption that graded pieces are finite type and projective implies any finite generating set lies in some finite type projective module $U$ that is invariant under automorphisms, so we obtain an embedding in a product of general linear groups.  It remains to show that the condition ``vertex algebra multiplication is respected'' is Zariski closed, so $\uAut_{V/R}$ is represented by a closed affine group subscheme of some general linear group.

To prove the Zariski closed condition, we adapt the following variant of the argument in section 2 of \cite{DG01}. Let $U$ be a finite dimensional sum of homogeneous pieces of $V$ that contains a full generating set. For each $n$-tuple $(a_1,\ldots,a_n)$ of integers, we obtain a linear map $U^{\otimes n + 1} \to V$ by composing the corresponding vertex operators:
\[ u^1 \otimes \cdots \otimes u_{n+1} \mapsto u^1_{a_1}u^2_{a_2}\cdots u^n_{a_n}u^{n+1} \]
For a homogeneous linear transformation $g$ on $U$ to be the restriction of an automorphism of $V$, it is necessary and sufficient that all of these products be preserved, for all $n \in \bZ_{\geq 0}$.  That is, for any $v \in V$, and any two presentations of $v$ as a finite sum of elements of the form $u^1_{a_1}u^2_{a_2}\cdots u^n_{a_n}u^{n+1}$, we require that the corresponding finite sums of elements of the form $g(u^1)_{a_1} g(u^2)_{a_2} \cdots g(u^n)_{a_n}g(u^{n+1})$ coincide. Each pair of presentations of a vector $v$ thus yields a polynomial relation on the coordinates of $g$ (viewed as a linear transformation on $U$), and if $g$ satisfies all such relations, then it is the restriction of a uniquely defined automorphism of $V$.  These polynomial relations therefore describe Zariski-closed subsets of $GL(U)$.
\end{proof}

\begin{defn}
Let $V$ be a finitely generated $\bZ$-graded vertex algebra over a commutative ring $R$.  We write $\Aut_{V/R}$ for the affine group scheme that represents the functor $\uAut_{V/R}$.
\end{defn}

\subsection{Lattice vertex algebras} \label{sec:lattice}

We review the lattice vertex algebra construction over $\bZ$, outlined in \cite{B86}, with more refined explanations in \cite{P92}, \cite{DG12}, and \cite{M14}.

\begin{defn}
Let $L$ be an integer lattice, i.e., a finitely generated free abelian group equipped with a nondegenerate integer-valued symmetric bilinear form.  We define the \textbf{rational Heisenberg Lie algebra} $H_L$ on $L$ to be the $\bQ$-vector space $(L \otimes \bQ)[t,t^{-1}] \oplus \bQ K$ with bracket given by $[t^i,t^j] = i \delta_{i+j,0}$ and $[t_i,K] = 0$.  This is a central extension of the abelian Lie algebra $(L \otimes \bQ)[t,t^{-1}]$ by the one dimensional Lie algebra $\bQ K$.  We write $H_L^{\geq 0}$ for the abelian subalgebra $(L \otimes \bQ)[t] \oplus \bC K$, and for any $a \in L \otimes \bQ$ and $r \in \bQ$, we write $\bQ_{a,r}$ for the 1-dimensional representation on which the subalgebra $t(L \otimes \bQ)[t]$ acts trivially, $(L \otimes \bQ)$ acts by inner product with $a$, and $K$ acts by the scalar $r$.  Then, we write $\pi^L_{a,\bQ}$ for the induced module $\Ind_{H_L^{\geq 0}}^{H_L} \bQ_{a,1}$, and identify this space with $\Sym_{\bQ} (t^{-1}(L \otimes \bQ)[t^{-1}]) \otimes \bQ e_a$ for a distinguished vector $e_a$.  For any $b \in L \otimes \bQ$ and $n \in \bZ$, we define $b(n)$ to be the operator
\[ a_{i_1} t^{i_1} \cdots a_{i_k} t^{i_k} e_a \mapsto \begin{cases} bt^n a_{i_1}t^{i_1} \cdots a_{i_k} t^{i_k} & n < 0 \\ (b,a) a_{i_1} t^{i_1} \cdots a_{i_k} t^{i_k} e_a & n = 0 \\
\sum_{j | i_j = -n} b_{i_1}t^{i_1}\cdots b_{i_{j-1}}t^{i_{j-1}} (a,b_{i_j}) b_{i_{j+1}}t^{i_{j+1}} \cdots b_{i_k} t^{i_k} e_a & n > 0 \end{cases} \]
\end{defn}

\begin{thm} (\cite{B86}, \cite{FLM88})
There is a unique $\bZ$-graded $\bQ$-vertex algebra structure on $\pi^L_{0,\bQ}$, with identity $1 \in \Sym_{\bQ} (t^{-1}(L \otimes \bQ)[t^{-1}])$, such that $L \otimes \bQ$ is the degree 1 subspace and $Y(a(n),z) = \sum_{i \in \bZ} a(i) z^{-i-1}$ for all $a \in L \otimes \bQ$.  Furthermore each $\pi^L_{a,\bQ}$ is an irreducible $\pi^L_{0,\bQ}$-module.  Given a basis $\{e_j\}$ of $L$, with Gram matrix $G$ (so $G_{i,j} = (e_i,e_j)$), the vector $\sum_{i,j} \frac{(G^{-1})_{i,j}}{2} e_i(-1)e_j(-1)1$ is a conformal vector for $\pi^L_0$, giving it the structure of a vertex operator algebra of central charge equal to the rank of $L$.
\end{thm}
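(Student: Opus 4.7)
The plan is to verify the standard Heisenberg vertex algebra construction adapted to the rational base. First I would establish the Heisenberg commutation relations for the generating currents directly from the definition: the operators $a(n)$ satisfy $[a(m),b(n)] = m(a,b)\delta_{m+n,0}$ on any $\pi^L_{a,\bQ}$. This is the only bracket one ever needs; it follows by reading off the three cases ($n<0$, $n=0$, $n>0$) in the definition of $b(n)$ and tracking how annihilation and creation operators commute past each other.

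For existence of the vertex algebra structure, I would invoke a reconstruction theorem: given a vacuum $\unit$, a set of generating fields $a(z) = \sum a(n) z^{-n-1}$ whose modes generate the whole space from $\unit$ under iterated application, and which are pairwise local, there exists a unique vertex algebra whose vertex operators on the generating states are the given fields. Locality of the currents is immediate from the Heisenberg bracket, since the commutator lives in a single degree. Vertex operators for the monomials $a_1(-n_1)\cdots a_k(-n_k)\unit$ are then assembled as iterated normal-ordered products and derivatives of generating fields. Uniqueness follows because $L\otimes\bQ$ generates $\pi^L_{0,\bQ}$ from $\unit$ using the modes $a(-n)$, so the values of $Y$ are forced on generators and then propagated by the Borcherds identity. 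The $\bZ$-grading is compatible because each $a(n)$ has conformal weight $-n$, so normal-ordered products preserve gradings additively.

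For irreducibility of $\pi^L_{a,\bQ}$ as a $\pi^L_{0,\bQ}$-module: the modes $b(n)$ with $b \in L\otimes\bQ$ lie in the module action of $\pi^L_{0,\bQ}$ (they are the zero modes of the generating fields), so the module structure factors through the Heisenberg algebra $H_L$. As an $H_L$-module, $\pi^L_{a,\bQ}$ is freely generated from $e_a$ by the creation operators $t^{-n}(L\otimes\bQ)$ with $n>0$, and any nonzero submodule can be lowered to $e_a$ by finitely many annihilation operators, whence it contains the generator and equals the whole space.

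The conformal vector claim is the step I would expect to be the main obstacle, as it is a bookkeeping-heavy normal-ordering calculation. Writing
\[ L_n = \tfrac{1}{2} \sum_{i,j} (G^{-1})_{i,j} \sum_{k \in \bZ} :e_i(k) e_j(n-k): , \]
I would check three properties. First, basis-independence and $L_0$-action: using the dual bases $\{e_i\}$ and $\{e_j^* := \sum_k (G^{-1})_{j,k} e_k\}$, one has $L_0 = \tfrac12\sum_i (e_i(0)e_i^*(0) + 2\sum_{k>0} e_i^*(-k)e_i(k))$, which acts diagonally on monomials by summing $-n_i$ over factors $a_i(-n_i)$, matching the grading. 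Second, $L_{-1}$ agrees with the translation operator $T$: one verifies $[L_{-1},a(n)] = -n\, a(n-1)$ on generators and checks $L_{-1}\unit = 0$, then uses that $T$ is the unique derivation with these properties. Third, the Virasoro relation $[L_m,L_n] = (m-n)L_{m+n} + \binom{m+1}{3} c\,\delta_{m+n,0}$ with $c = \rank L$: this is obtained by applying the Heisenberg bracket twice inside the double normal-ordered expression, tracking the anomalous contractions; the central term collects as $\tfrac{1}{2}\sum_{i,j,k}(G^{-1})_{i,j} G_{j,k} (G^{-1})_{k,i} \cdot (\text{combinatorial factor}) = \tfrac{1}{2}\Tr(G^{-1}) \cdot (\text{factor})$ after the dust settles, but since the Gram matrix inverse pairing with one $G_{j,k}$ produces $\delta_{i,k}$, the residual trace is $\Tr(I) = \rank L$, yielding the stated central charge. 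The calculation is classical (e.g.\ \cite{FLM88}) and all denominators appearing are either $2$ or entries of $G^{-1}$, both of which lie in $\bQ$, so working over $\bQ$ rather than $\bC$ changes nothing.
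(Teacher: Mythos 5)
The paper offers no proof of this theorem at all: it is quoted as a known result and cited to \cite{B86} and \cite{FLM88}, so there is no in-paper argument to compare against. Your sketch is, in outline, the standard proof from those references (see also \cite{LL04}): Heisenberg commutators, a reconstruction theorem for mutually local generating fields, irreducibility of Fock modules in characteristic zero, and the Sugawara-type normal-ordering computation for the Virasoro relations. Your observation that all structure constants are rational, so the classical complex computations descend verbatim to $\bQ$, is exactly the right point and is why the paper can cite the characteristic-zero literature without further comment.

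Three small repairs would tighten the sketch. First, the reconstruction theorem you invoke requires as input a translation operator $T$ with $T\unit = 0$ and $[T, a(z)] = \partial_z a(z)$; you should define it at the outset (the unique derivation with $[T, a(n)] = -n\,a(n-1)$) rather than only recovering it afterwards as $L_{-1}$. Second, the theorem asserts that each $\pi^L_{a,\bQ}$ \emph{is} a $\pi^L_{0,\bQ}$-module, so you need existence of the module structure (by the same local-fields machinery applied to the currents acting on $\pi^L_{a,\bQ}$), not merely irreducibility of a presumed structure; your irreducibility argument itself (lowering any nonzero vector to a multiple of $e_a$ by annihilation operators, using nondegeneracy of the form and invertibility of nonzero integers in $\bQ$) is fine. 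Third, your intermediate expression for the central term is garbled: the double contraction pairs \emph{two} factors of $G$ against \emph{two} factors of $G^{-1}$, producing $\Tr\bigl((G^{-1}G)^2\bigr) = \Tr(I)$, not $\frac{1}{2}\sum_{i,j,k}(G^{-1})_{i,j}G_{j,k}(G^{-1})_{k,i} = \frac{1}{2}\Tr(G^{-1})$; your stated conclusion $c = \rank L$ is nevertheless correct.
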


\begin{defn} \label{defn:twisted-group-ring} (\cite{FLM88} chapter 5)
Let $L$ be an even lattice, i.e., an integer lattice where $(a,a) \in 2\bZ$ for all $a \in L$.  We define the \textbf{double cover} $\hat{L}$ to be the unique (up to isomorphism) central extension of $L$ by $\pm 1$ such that the commutator $[a,b]$ is given by $(-1)^{(\bar{a},\bar{b})}$, where $\bar{a}, \bar{b} \in L$ are the images of $a, b\in \hat{L}$ under the covering homomorphism $\hat{L} \to L$.  We will write $e: L \to \hat{L}$, given by $\alpha \mapsto e_\alpha$, to denote a set-theoretic section of the covering homomorphism.  The \textbf{twisted group ring} $\bZ\{L\}$ of $L$ is the quotient of $\bZ[\hat{L}]$ by the ideal generated by $\kappa + 1$, where $\kappa$ is the nontrivial element in $\hat{L}$ that maps to $0 \in L$.  We write $\iota: \hat{L} \to \bZ\{L\}$, given by $a \mapsto a \otimes 1$ for the multiplicative map.  We note that there is an isomorphism $\bZ[L] \to \bZ\{L\}$ of abelian groups taking the basis vector $e^\alpha$ to $\iota(e_\alpha)$, but this is in general not a ring homomorphism.
\end{defn}

\begin{defn}
Let $L$ be an even lattice.  We define $V_{L,\bQ}$ to be the $\bQ\{L\}$-module $\bQ\{L\} \otimes_\bQ \pi^L_{0,\bQ}$, with $L$-graded decomposition $V_{L,\bQ} \cong \bigoplus_{a\in L} \pi^L_{a,\bQ}$ into Heisenberg modules.  For any $a \in L \otimes \bQ$, we define the operators
\[ \begin{aligned} E^-(-a,z) &= \exp\left( \sum_{n>0} \frac{a(-n)}{n}z^n \right)  \\
E^+(-a,z) &= \exp\left( \sum_{n>0} \frac{-a(n)}{n}z^{-n} \right) \end{aligned} \]
Furthermore, we define the operator $z^a$ to be scalar multiplication by $\langle a, b \rangle$ on $\pi^L_{b,\bQ}$, and $\iota(e_a)$ to be left multiplication by $\iota(e_a) \in \bQ\{L\}$.
\end{defn}

\begin{thm}\label{thm:lattice-VOA-Q} (\cite{B86}, \cite{FLM88})
Let $L$ be an even lattice.  Then, there is a unique $\bQ$-vertex algebra structure on $V_{L,\bQ}$ such that $1 \otimes e_0$ is the identity element, and $Y(e_a,z) = E^-(-a,z)E^+(a,z) z^a \iota(e_a)$ for all $a \in L$.  Fix a basis $\{\alpha_i\}$ of $L$, and let $G$ be the corresponding Gram matrix.  If $L$ is positive definite, then the vector $\omega = \sum_{i,j} \frac{(G^{-1})_{i,j}}{2} \alpha_i(-1)\alpha_j(-1)1$ endows $V_{L,\bQ}$ with the structure of a  vertex operator algebra over $\bQ$ with central charge equal to the rank of $L$.  Furthermore, $V_{L,\bQ}$ has an $L$-grading, in the sense that restriction of the product to $\pi^L_{a,\bQ} \otimes \pi^L_{b,\bQ}$ takes values in $\pi^L_{a+b,\bQ}((z))$.
\end{thm}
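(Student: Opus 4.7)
The plan is to use the Reconstruction Theorem (sometimes called the existence theorem, e.g. \cite{LL04} Theorem 5.7.1 or \cite{MN97}): since $V_{L,\bQ}$ is spanned by products of operators $\alpha(-n)$ applied to the elements $1 \otimes e_a$, it suffices to specify the vertex operators $Y(\alpha(-1)1,z) = \sum_{n \in \bZ} \alpha(n) z^{-n-1}$ and $Y(1 \otimes e_a,z) = E^-(-a,z)E^+(a,z) z^a \iota(e_a)$ on these generating sets and verify mutual locality. The vertex operator structure on the Heisenberg piece $\pi^L_{0,\bQ}$ (and its action on each $\pi^L_{a,\bQ}$) is already established in the preceding theorem, so the task reduces to four locality checks: the $\alpha(n)$ among themselves, the $\alpha(n)$ against $Y(e_a,z)$, and $Y(e_a,z)$ against $Y(e_b,z)$.

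The middle check follows from a direct computation: conjugation of $E^\pm(a,z)$ by $\beta(n)$ produces only the scalar shifts dictated by the Heisenberg bracket, yielding $[\beta(n), Y(e_a,z)] = (\beta,a) z^n Y(e_a,z)$, which is local of order $\leq 1$. The central calculation is the locality of $Y(e_a,z)$ and $Y(e_b,z)$. Using the identity
\[
E^+(a,y) E^-(-b,z) = (1 - z/y)^{-(a,b)} E^-(-b,z)E^+(a,y),
\]
one expands both $Y(e_a,y)Y(e_b,z)$ and $Y(e_b,z)Y(e_a,y)$ as normally ordered products times rational-in-$(y,z)$ factors, multiplied by an operator $\iota(e_a)\iota(e_b)$ or $\iota(e_b)\iota(e_a)$. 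The factor $(y-z)^{(a,b)}$ (expanded in the two different regions) kills the poles and produces equal expressions up to the scalar $(-1)^{(a,b)}$. This sign is precisely the commutator in the double cover $\hat{L}$ by Definition \ref{defn:twisted-group-ring}, so after multiplying by $(y-z)^N$ for $N \geq \max(0,-(a,b))$ the two sides agree, giving locality.

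Uniqueness is immediate from the reconstruction framework: any $\bZ$-graded vertex algebra structure with identity $1 \otimes e_0$ and the specified $Y(e_a,z)$ has all the $\alpha(n)$-operators determined (via $Y(\alpha(-1)(1 \otimes e_0),z)$ and the translation $T$), which in turn determines $Y(v,z)$ for every monomial $v \in V_{L,\bQ}$ by the iterate formula $Y(u_{-n-1}v,z) = \partial^{(n)}Y(u,z)Y(v,z) + \cdots$ coming from associativity. For the conformal structure when $L$ is positive definite, one verifies directly that the given $\omega$ satisfies the Virasoro relations with central charge $\rank L$ (this is the standard Heisenberg Sugawara calculation, insensitive to the $L$-grading since $\omega$ lies in $\pi^L_{0,\bQ}$), that $L_{-1}$ acts as $T$, and that $L_0$ has integer eigenvalues equal to $\tfrac{1}{2}(a,a) + n$ on the degree-$n$ piece of $\pi^L_{a,\bQ}$, which reproduces the $\bZ$-grading. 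Finally, the $L$-grading assertion follows by inspection: $Y(e_a,z)$ shifts the $L$-degree by $a$ on generators, and this property is preserved under forming iterates.

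The main technical obstacle is the locality computation for $Y(e_a,z)$ against $Y(e_b,z)$; everything depends on the cocycle from $\hat{L}$ aligning precisely with the sign produced by reordering the $E^\pm$ factors, which is the core reason the double cover was introduced in Definition \ref{defn:twisted-group-ring}. Once this sign match is verified, the remaining steps are formal consequences of reconstruction and the Virasoro-Sugawara computation.
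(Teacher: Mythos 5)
The paper offers no proof of this theorem: it is quoted as background with citations to \cite{B86} and \cite{FLM88}, and your argument --- reconstruction from the mutually local generating fields $Y(\alpha(-1)1,z)$ and $Y(e_a,z)$, with the reordering factor $(y-z)^{(a,b)}$ (expanded in the two regions) matched against the cocycle sign $(-1)^{(a,b)}$ from the double cover, plus the standard Sugawara--Virasoro computation for $\omega$ --- is precisely the proof found in those references (and in \cite{LL04}). So your proposal is correct and takes essentially the same route as the sources the paper relies on.
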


\begin{defn}
Let $L$ be an even lattice, and let $a \in L$.  We write $s_{a,n}$ to be the $z^n$-coefficient of $E^-(a,z)$, and $V_L$ for the $\bZ$-span of all composites $s_{a^1,n_1}\cdots s_{a^k,n_k}\iota(e_a)$ for $a^1,\ldots,a^k,a \in L$, $n_1,\ldots,n_k \in \bZ_{\geq 0}$.
\end{defn}

\begin{thm} (\cite{B86}, \cite{P92}, \cite{DG12})
Let $L$ be an even lattice.  Then, $V_L$ is a $\bZ$-graded vertex algebra over $\bZ$, and is a $\bZ$-form of $V_{L, \bQ}$.  If $L$ is positive definite and unimodular, then $V_L$ is a vertex operator algebra over $\bZ$ of central charge equal to the rank of $L$, and its graded pieces are positive definite unimodular lattices.
\end{thm}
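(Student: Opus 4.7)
My plan is to establish the three assertions in sequence, following the strategy of \cite{B86}, \cite{P92}, and \cite{DG12}. First, I would verify that $V_L$ is closed under all products $u_n v$ inherited from $V_{L,\bQ}$. The generators $s_{a,n}$ are the $z^n$-coefficients of $E^-(a,z) = \exp\bigl(\sum_{n>0} a(-n)z^n/n\bigr)$, and they satisfy the commutator $[a(m), s_{b,n}] = (a,b)\, s_{b,n-m}$ for $m > 0$ (with the convention $s_{b,n} = 0$ for $n < 0$). Applied to a spanning element $s_{a_1,n_1}\cdots s_{a_k,n_k}\iota(e_b)$, the formula $Y(\iota(e_a),z) = E^-(-a,z)\, E^+(a,z)\, z^a\, \iota(e_a)$ of Theorem \ref{thm:lattice-VOA-Q} produces modes in which each occurrence of $a(n)$ for $n>0$ coming from $E^+(a,z)$ collapses via the commutator formula to $\bZ$-linear combinations of $s$-operators, using $(a,b)\in \bZ$ and that the cocycle values in $\hat L$ lie in $\pm 1$. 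An induction on generator length, together with direct degree-counting for grading compatibility, completes this step.

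For the $\bZ$-form claim, I would exhibit a PBW-type integral basis: fixing a basis $\{\alpha_i\}$ of $L$, the products $\prod_i \prod_k s_{\alpha_i, n^{(i)}_k}\cdot \iota(e_a)$, indexed by partition-valued functions on the basis and by $a \in L$, span $V_L$ by the closure computation and are $\bZ$-linearly independent because their images in $V_{L,\bQ}$ are $\bQ$-linearly independent (this reduces to the standard PBW basis for the universal enveloping algebra of the Heisenberg Lie algebra, combined with the $L$-grading from $\iota(e_a)$). This also exhibits $V_L \otimes_\bZ \bQ \cong V_{L,\bQ}$ as vertex algebras.

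For the VOA structure when $L$ is positive definite and unimodular, the main point is to show $\omega \in V_L$. Writing $\omega = \tfrac{1}{2}\sum_{i,j}(G^{-1})_{ij}\, s_{\alpha_i,1} s_{\alpha_j,1}\cdot 1$, the off-diagonal terms combine symmetrically to $\sum_{i<j}(G^{-1})_{ij}\, s_{\alpha_i,1}s_{\alpha_j,1}\cdot 1 \in V_L$, since $(G^{-1})_{ij} \in \bZ$ by unimodularity. For the diagonal, unimodularity places the dual basis $f_i = \sum_k (G^{-1})_{ik}\alpha_k$ inside $L$, and evenness of $L$ forces $(f_i,f_i) = (G^{-1})_{ii}$ to be even, so $(G^{-1})_{ii}/2 \in \bZ$. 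The remaining VOA axioms then transfer from $V_{L,\bQ}$ by the $\bZ$-form property, and each $V_n$ is free of finite rank by the PBW basis. The invariant bilinear form pulled back from the VOA endows each $V_n$ with the structure of an integer lattice, which is positive definite by positive definiteness of $L$ and unimodular by a Gram determinant computation on the explicit basis.

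The main obstacle is the integrality of products computed via $Y(\iota(e_a),z)$: both $E^-(-a,z)$ and $E^+(a,z)$ individually involve denominators, and it is only their combined action together with the cocycle-twisted multiplication by $\iota(e_a)$ that produces an integral operator, as captured precisely by the $s_{a,n}$-based presentation of $V_L$. Getting the bookkeeping right on this cancellation, in the presence of the $L$-grading shifts, is the heart of the argument.
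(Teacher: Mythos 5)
The paper offers no internal proof of this theorem: it is imported wholesale from \cite{B86}, \cite{P92}, and \cite{DG12}, so your proposal can only be compared against the standard arguments in those references. Your outline does reproduce their overall shape — closure of the $s$-operator span under vertex operations, a PBW-type monomial basis indexed by partition-valued functions and lattice elements, integrality of $\omega$ via unimodularity (your off-diagonal symmetry argument and the computation $(f_i,f_i) = (G^{-1})_{ii} \in 2\bZ$ are correct, and match the computation in Proposition \ref{prop:lattice-VOA-subring-Q}), and self-duality of graded pieces.

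However, two steps as written would not go through. First, the commutator $[a(m), s_{b,n}] = (a,b)\,s_{b,n-m}$ is correct but insufficient for the closure step: the individual modes of $E^+(a,z)$ are divided-power expressions in the $a(m)$ (for instance the $z^{-2}$-coefficient is $\tfrac12 a(1)^2 - \tfrac12 a(2)$), so commuting one $a(m)$ at a time produces rational coefficients whose cancellation is exactly the ``bookkeeping'' you defer to the end and never carry out. The standard resolution is to move the entire group-like element at once via the normal-ordering identity $E^+(a,z)E^-(b,w) = (1 - w/z)^{(a,b)}\,E^-(b,w)\,E^+(a,z)$, which is integral because the binomial series with integer exponent $(a,b)$ has integer coefficients, combined with $E^+(a,z)\iota(e_b) = \iota(e_b)$; this identity, not the infinitesimal commutator, is where \cite{P92}, \cite{DG12}, and \cite{M14} do the real work. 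Relatedly, your spanning claim — that monomials in $s_{\alpha_i,n}$ for a \emph{fixed basis} $\{\alpha_i\}$ already span — silently requires the multiplicativity $E^-(a,z)E^-(b,z) = E^-(a+b,z)$ together with the fact that $s_{-\alpha,n}$ is an integer polynomial in the $s_{\alpha,m}$ (the elementary-versus-complete symmetric function relation); neither is stated. Second, unimodularity of the graded pieces is not ``a Gram determinant computation on the explicit basis'': the monomial basis is neither orthogonal nor self-dual for the invariant form, and the actual argument in \cite{DG12} pairs $\pi^L_a$ against $\pi^L_{-a}$ and invokes the Hall-pairing duality between complete homogeneous and monomial symmetric functions, adapted to the lattice form — self-duality genuinely fails for non-unimodular $L$, so this step consumes the unimodularity hypothesis in an essential way that a naive determinant count does not expose.
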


\begin{defn}
For any commutative ring $R$, we write $V_{L,R}$ to denote $V_L \otimes R$, and for each $a \in L$, we define $\pi^L_{a,R}$ to be the $a$-graded part of $V_{L,R}$ in the lattice grading.  By the previous theorem, this notation does not create a conflict in the case $R = \bQ$.
\end{defn}

\begin{prop} \label{prop:lattice-VOA-subring-Q}
Let $L$ be an even lattice.  For any subring $R$ of $\bQ$, the vector $\omega \in V_{L,\bQ}$ lifts to an element in $V_{L,R}$ if and only if the determinant of $L$ is invertible in $R$.  If this is the case, and if $L$ is positive definite, then this lift of $\omega$ endows $V_{L,R}$ with a vertex operator algebra structure over $R$ of central half-charge equal to half the rank of $L$.
\end{prop}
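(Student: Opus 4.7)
The plan is to first identify an explicit $\bZ$-basis of the weight-$2$ part of $V_L \cap \pi^L_0$, read off the coefficients of $\omega$ in this basis, test when they lie in $R$, and then descend the vertex operator algebra axioms from $\bQ$ to $R$.

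Set $v_{ij} := \alpha_i(-1)\alpha_j(-1)\mathbf{1}$ for $i \leq j$, $w_i := \alpha_i(-2)\mathbf{1}$, and $u_i := s_{\alpha_i,2}\mathbf{1} = \tfrac{1}{2}(v_{ii} - w_i)$. My first step is to check that the weight-$2$ part of $V_L$ inside $\pi^L_0$ is the free $\bZ$-module with basis $\{u_i\} \cup \{v_{ij} : i < j\} \cup \{w_i\}$. The containment $\supseteq$ uses $s_{\alpha_i,1} s_{\alpha_j,1}\mathbf{1} = v_{ij}$, the definition of $u_i$, and $w_i = v_{ii} - 2u_i$; the containment $\subseteq$ reduces to checking that an arbitrary generator $s_{\beta,2}\mathbf{1}$ with $\beta = \sum c_k\alpha_k \in L$ has integer coefficients on the proposed basis. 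A direct expansion shows the coefficient of $w_k$ comes out to $\tfrac{1}{2}c_k(c_k-1) \in \bZ$, which is an integer because $c_k(c_k-1)$ is always even. Rewriting $\omega$ via $v_{ii} = 2u_i + w_i$ then gives
\[ \omega = \sum_i (G^{-1})_{ii}\, u_i + \sum_{i<j}(G^{-1})_{ij}\, v_{ij} + \sum_i \tfrac{(G^{-1})_{ii}}{2}\, w_i, \]
so $\omega \in V_{L,R}$ if and only if $(G^{-1})_{ij} \in R$ for $i < j$ and $\tfrac{(G^{-1})_{ii}}{2} \in R$ for all $i$.

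The backward direction of the iff is then quick: these conditions put all entries of $G^{-1}$ in $R$, so the identity $G \cdot G^{-1} = I$ over $\bQ$ holds in $M_n(R)$ and $\det L \in R^\times$. For the forward direction, assuming $D := \det L \in R^\times$, the cofactor formula gives $G^{-1} \in M_n(\bZ[1/D]) \subseteq M_n(R)$, handling the off-diagonal constraint. The delicate point is absorbing the $\tfrac{1}{2}$ on the diagonal, and here the evenness of $L$ is essential: writing $\alpha_i^* := \sum_j (G^{-1})_{ji}\alpha_j$ for the dual basis vector, we have $D\alpha_i^* \in L$ and hence $D^2 (G^{-1})_{ii} = (D\alpha_i^*, D\alpha_i^*) \in 2\bZ$. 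Setting $k_i := D(G^{-1})_{ii} \in \bZ$, the relation $Dk_i \in 2\bZ$ forces $k_i$ to be even when $D$ is odd (giving $\tfrac{(G^{-1})_{ii}}{2} \in \tfrac{1}{D}\bZ \subseteq R$), while when $D$ is even the hypothesis $D \in R^\times$ already forces $2 \in R^\times$ (giving $\tfrac{(G^{-1})_{ii}}{2} \in \tfrac{1}{2D}\bZ \subseteq R$). This parity step is the one genuine obstacle.

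With the lift in hand and $L$ positive definite, the vertex operator algebra axioms over $R$ will follow by restriction from $V_{L,\bQ}$. The Virasoro commutation relations, the identity $L_{-1}u = u_{-2}\mathbf{1}$, and the weight decomposition $L_0|_{V_n} = n\cdot\id$ are identities among operators coming from $Y(\omega,z)$ with $\omega \in V_{L,R}$, so they restrict from $V_{L,\bQ}$ to $V_{L,R}$; semisimplicity of $L_0$ is then automatic. Positive-definiteness of $L$ ensures that each weight space of $V_L$ is a finite free $\bZ$-module and that $V_n = 0$ for $n < 0$, so after base change we obtain finite projective $R$-modules and the required lower bound on the grading. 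The central half-charge equals $\tfrac{1}{2}\rank(L)$ as computed over $\bQ$.
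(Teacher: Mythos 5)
Your proof is correct and takes essentially the same route as the paper: the paper obtains your membership criterion ($(G^{-1})_{ij} \in R$ for $i \neq j$ and $(G^{-1})_{ii} \in 2R$) by citing the weight-$2$ calculation of Proposition 5.8 in \cite{M14}, which your explicit basis $\{u_i\} \cup \{v_{ij}\}_{i<j} \cup \{w_i\}$ reconstructs, and then deduces the equivalence with $\det L \in R^\times$ and descends the vertex operator algebra axioms by base change along $R \to \bQ$ exactly as you do. Your parity case-split on $D$ is a valid (if slightly roundabout) justification of the step the paper asserts as ``evenness of $L$ implies $(G^{-1})_{ii} \in 2R$,'' which can also be seen directly: $\det L \in R^\times$ gives $\alpha_i^* \in L \otimes R$, and the even form on $L$ extends to an $R$-valued quadratic form on $L \otimes R$, so $(G^{-1})_{ii} = (\alpha_i^*,\alpha_i^*) \in 2R$ without any case distinction.
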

\begin{proof}
The first claim for the special case $R = \bZ$ is Proposition 5.8 of \cite{M14}.  We give a sketch of how the argument extends to subrings of $\bQ$.

Given a basis $\{\alpha_1,\ldots,\alpha_l\}$ of $L$ and the dual basis $\{\alpha'_1,\ldots,\alpha'_l\}$ in $L \otimes \bQ$, one notes that the constants $c_{ij}$ defined by $\alpha'_i = \sum_{j=1}^l c_{ji}\alpha_i$ are the entries of the inverse Gram matrix $(G^{-1})_{i,j}$.
A short calculation (identical to \textit{loc. cit.}) shows that $\omega \in V_{L,R}$ if and only if $(G^{-1})_{i,i} \in 2R$ for all $i$ and $(G^{-1})_{i,j} + (G^{-1})_{j,i} \in 2R$ for all $j \neq i$.  By symmetry of the Gram matrix, the second condition is equivalent to $(G^{-1})_{i,j} \in R$ for all $j \neq i$.  If $\det L \in R^\times$, then $(G^{-1})_{i,j} \in R$ for all $i,j$, and evenness of $L$ implies $(G^{-1})_{i,i} \in 2R$.  Conversely, if $(G^{-1})_{i,j} \in R$ for all $i,j$, then $\alpha'_i \in L \otimes R$ for all $i$, so the determinant of $L$ is invertible in $R$.

For the second claim, it suffices to note that all of the defining properties of a vertex operator algebra are preserved and reflected by base change along $R \to \bQ$.  Thus, the claim follows from Theorem \ref{thm:lattice-VOA-Q}.
\end{proof}

\begin{cor}
Let $L$ be an even lattice, and let $R$ be a commutative ring.  If $\det L$ is invertible in $R$, then the formula for $\omega$ in Theorem \ref{thm:lattice-VOA-Q} defines a conformal vector $\omega$ that endows $V_{L,R}$ with the structure of a vertex operator algebra over $R$.
\end{cor}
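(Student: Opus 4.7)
The plan is to reduce to Proposition \ref{prop:lattice-VOA-subring-Q} by a base change argument. Since $\det L \in R^\times$, the unique ring map $\bZ \to R$ factors through the localization $\bZ[1/\det L] \subseteq \bQ$. I would first apply that proposition to the subring $\bZ[1/\det L] \subseteq \bQ$: it asserts that the formula from Theorem \ref{thm:lattice-VOA-Q} produces a well-defined element $\omega \in V_{L, \bZ[1/\det L]}$, and that (with $L$ positive definite, which is the standing hypothesis of the paper) this $\omega$ endows $V_{L, \bZ[1/\det L]}$ with the structure of a vertex operator algebra of central half-charge equal to half the rank of $L$.

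Using the natural identification
\[ V_{L,R} \;\cong\; V_{L, \bZ[1/\det L]} \otimes_{\bZ[1/\det L]} R, \]
I would then take the image of $\omega$ along this base change to be the conformal vector for $V_{L,R}$; tautologically it is given by the formula of Theorem \ref{thm:lattice-VOA-Q}. What remains is to verify that each defining property of a vertex operator algebra descends along the ring map $\bZ[1/\det L] \to R$. The Virasoro commutator relations $[L_m, L_n] = (m-n) L_{m+n} + \binom{m+1}{3} c \cdot \id$ and the identity $L_{-1} u = u_{-2} \unit$ are $R$-linear polynomial identities among the modes, so they transfer directly. The $\bZ$-grading, the condition that $L_0$ acts as multiplication by $n$ on $V_n$ (which delivers both semisimplicity of $L_0$ and the degree-eigenvalue compatibility at once), and the boundedness $V_n = 0$ for $n$ sufficiently small are all features of the integral model $V_L$ itself and are plainly preserved under tensoring with $R$. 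Finally, each graded piece of $V_L$ is a free abelian group of finite rank, so its base change to $R$ is a free, hence finite projective, $R$-module.

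The only conceptual step is the factorization of the structure map $\bZ \to R$ through $\bZ[1/\det L]$, which converts the hypothesis into an instance of the preceding proposition; everything after that is formal bookkeeping about tensor products. I therefore do not anticipate any significant obstacle.
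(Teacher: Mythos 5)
Your proposal is correct and matches the paper's proof in essence: both factor the structure map $\bZ \to R$ through a subring of $\bQ$ in which $\det L$ is invertible, invoke Proposition \ref{prop:lattice-VOA-subring-Q} there, and base-change the resulting vertex operator algebra structure to $R$. The only cosmetic difference is that you use the localization $\bZ[1/\det L]$ while the paper uses the maximal subring of $\bQ$ admitting a homomorphism to $R$; your explicit verification that each axiom is preserved under base change is a harmless elaboration of the paper's one-line appeal to the same fact.
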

\begin{proof}
Suppose $\det L$ is invertible in $R$, and let $S$ be the maximal subring of $\bQ$ that admits a ring homomorphism to $R$.  Then, $\det L$ is invertible in $S$, and by Proposition \ref{prop:lattice-VOA-subring-Q}, the formula for $\omega$ yields a vertex operator algebra structure on $V_{L,S}$.  This base-changes to a vertex operator algebra over $R$ by the unique ring homomorphism $S \to R$.
\end{proof}

Note that if we specialize this result to the case where $R$ is a field of characteristic $p > 0$, we obtain Theorem 14 in \cite{Mu14}.

\section{Reductive group schemes}

\subsection{Fundamentals}

We review the parts of the theory of reductive groups that are necessary for us. All definitions are taken from \cite{SGA3}. A basic modern reference is section 1 of \cite{C14}.

\begin{defn}
For any commutative ring $R$, the \textbf{multiplicative group} $\bG_{m,R}$ is the group scheme $\Spec R[x,y]/(xy-1) = \Spec R[x,x^{-1}]$ with multiplication rule given by the coproduct $\Delta(x) = x \otimes x$ and unit given by the counit $\epsilon(x) = 1$.  As a functor, $\bG_{m,R}$ takes a commutative $R$-algebra $S$ to the abelian group $S^\times$.  We define the \textbf{contravariant duality functor} $D_R$ from commutative group sheaves on $\Spec R$ to commutative group sheaves on $\Spec R$, by $D_R(M) = \uHom_{R-gp}(M, \bG_{m,R})$.  That is, for any commutative ring homomorphism $R \to S$, the group of $S$-points is $D_R(M)(S) = \Hom_S(M_S,\bG_{m,S})$.  A \textbf{torus} is a group scheme that represents $D_R(M)$ for $M$ an fpqc locally constant sheaf whose fibers are free abelian groups of finite rank.  If the sheaf $M$ is constant, then we say the torus $D_R(M)$ is split.
\end{defn}

Note: By \cite{SGA3} Exp. X Corollary 5.9, $D_R$ gives an involutive bijection between \'etale locally constant sheaves of free abelian groups of finite rank and tori over $R$.  In particular, (by \textit{loc. cit.} Corollary 4.5) any torus is trivialized by some \'etale cover.  Choosing a basis of a free abelian group $M$ of rank $n$ induces an isomorphism $D_R(M) \cong \bG_{m,R}^n$.

\begin{defn}
Given an affine group scheme $G$ over a commutative ring $R$, a \textbf{maximal torus} in $G$ is a subgroup scheme $T$ of $G$ that is a torus, such that for any geometric point $\Spec k \to \Spec R$, the torus $T_k$ is not contained in any strictly larger torus in $G_k$.
\end{defn}

By \cite{SGA3} Exp. XIV Corollary 3.20, maximal tori of smooth group $R$-schemes exist Zariski locally on $\Spec R$.  By \cite{SGA3} Exp. XXII Corollary 5.3.11, any two maximal tori (of an fpqc locally split reductive group) are conjugate \'etale locally.  This fact will play an essential role in our analysis of $\Aut V_L$.

\begin{defn} (\cite{SGA3} Exp. XIX Def. 1.6.1, 2.7)
An affine group scheme over a field is \textbf{reductive} if it is smooth, geometrically connected, and its maximal connected unipotent normal subgroup is trivial (here, a group is unipotent if it admits an increasing filtration whose quotients are subgroups of the additive group $\bG_a$, the group scheme which takes a ring to its underlying additive group).  Given a commutative ring $R$, a \textbf{reductive group scheme} over $R$ is a group scheme $G \to \Spec R$ whose structure morphism is smooth and affine, and whose geometric fibers are reductive in the previous sense.
\end{defn}

\begin{rem}
There are other definitions of ``reductive'' in the literature, where fibers are not necessarily connected (e.g., \cite{C14}). We follow the definition in \cite{SGA3} since that is where we draw the results we need.
\end{rem}

\begin{defn}
A \textbf{root datum} is a 4-tuple $(X,\Phi,X^\vee,\Phi^\vee)$, where
\begin{enumerate}
\item $X$ and $X^\vee$ are finite rank free abelian groups, equipped with a perfect duality pairing $\langle-,-\rangle: X \times X^\vee \to \bZ$
\item $\Phi \subset X$ and $\Phi^\vee \subset X^\vee$ are finite subsets, such that there exists a bijection $a \leftrightarrow a^\vee$ that satisfies $\langle a , a^\vee \rangle = 2$ and the associated reflections $s_a: x \mapsto x - \langle x,a^\vee \rangle a$ and $s_{a^\vee}: \lambda \mapsto \lambda - \langle a, \lambda \rangle a^\vee$ satisfy $s_a(\Phi) = \Phi$ and $s_{a^\vee}(\Phi^\vee) = \Phi^\vee$.
\end{enumerate}
A root datum is \textbf{reduced} if for any root $a \in \Phi$, the only scalar multiples that are roots are $a, -a$. A root datum is \textbf{semisimple} if the saturation of $\bZ \Phi$ in $X$ is all of $X$, or equivalently if $\bQ \Phi = X \otimes \bQ$.  
\end{defn}

\begin{thm}
Let $G$ be a reductive group over an algebraically closed field.  For a fixed maximal torus $T$, the 4-tuple $(X(T),\Phi(G,T),X^\vee(T),\Phi^\vee(G,T))$
defined as follows is a root datum:
\begin{enumerate}
\item $X(T) = \Hom_{k-gp}(T,\bG_m)$, the group of characters of $T$.
\item $X^\vee(T) = \Hom_{k-gp}(\bG_m, T)$, the group of cocharacters of $T$, dual to $X(T)$.
\item $\Phi(G,T)$ is the set of roots, i.e., nonzero characters of the $T$-action on $\fg = \Lie(G)$.  Each root $a$ induces a homomorphism (unique up to $T(k)$-conjugation on $G$) $\phi_a: SL_2 \to G$ taking the diagonal torus to $T$ and the strictly upper triangular matrices isomorphically to the root group $U_a$.
\item $\Phi^\vee(G,T)$ is the set of cocharacters induced by restricting $\phi_a$ to the diagonal torus of $SL_2$.
\end{enumerate}
\end{thm}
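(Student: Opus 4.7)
The proof would follow the standard development of root data for reductive groups over algebraically closed fields, along the lines of Borel's textbook or SGA3 Exp. XIX. The plan is to verify each axiom of the root datum in turn, using the structure theory of reductive groups of semisimple rank one as the key input.

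First, I would record the elementary part. Since $k$ is algebraically closed, $T$ is split, so a choice of isomorphism $T \cong \bG_m^n$ identifies $X(T)$ and $X^\vee(T)$ with $\bZ^n$; the composition pairing $X(T) \times X^\vee(T) \to \Hom_{k-gp}(\bG_m,\bG_m) = \bZ$ is then visibly perfect. Since $T$ is diagonalizable, it acts semisimply on $\fg$ with a weight space decomposition $\fg = \fg^T \oplus \bigoplus_{a \in \Phi} \fg_a$, and finite dimensionality of $\fg$ shows that $\Phi$ is finite. Thus the two free modules and two finite subsets in the 4-tuple are in place, and it remains to establish the bijection $a \leftrightarrow a^\vee$ with the required properties.

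Next I would construct $\phi_a$ and the coroots. For each root $a \in \Phi$, let $T_a = (\ker a)^\circ$, a subtorus of codimension one in $T$, and let $Z_a = Z_G(T_a)$ be its centralizer. By the standard analysis of centralizers of tori in reductive groups, $Z_a$ is connected reductive with maximal torus $T$ and semisimple rank one, meaning its derived subgroup $Z_a^{\mathrm{der}}$ is isomorphic to either $SL_2$ or $PGL_2$. In either case one obtains a canonical isogeny (well defined up to $T(k)$-conjugation) $\phi_a: SL_2 \to Z_a \hookrightarrow G$ sending the diagonal torus into $T$ and the strictly upper triangular subgroup isomorphically onto a one-dimensional unipotent subgroup $U_a$ with $\Lie U_a = \fg_a$. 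The coroot $a^\vee \in X^\vee(T)$ is then defined as the composition of $\phi_a$ with the standard cocharacter $t \mapsto \mathrm{diag}(t,t^{-1})$ of $SL_2$. This produces the bijection.

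Finally, I would verify the numerical and reflection axioms. The pairing $\langle a, a^\vee \rangle$ is computed directly from $\phi_a$: the composition $\bG_m \xrightarrow{a^\vee} T \xrightarrow{a} \bG_m$ coincides with the character by which the diagonal torus of $SL_2$ acts on its strictly upper triangular Lie subalgebra, giving $\langle a, a^\vee\rangle = 2$. For the reflections, one lifts the Weyl element $w = \bigl(\begin{smallmatrix}0 & 1 \\ -1 & 0\end{smallmatrix}\bigr) \in SL_2$ via $\phi_a$ to obtain an element $n_a \in N_G(T)(k)$ whose action on $X(T)$ and $X^\vee(T)$ is computed by direct calculation inside the rank one group $Z_a^{\mathrm{der}}$ to be exactly $s_a$ and $s_{a^\vee}$ respectively; since $n_a$ acts on $\fg$ preserving the weight decomposition, it permutes $\Phi$, and dually $\Phi^\vee$. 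This gives the required invariance $s_a(\Phi) = \Phi$ and $s_{a^\vee}(\Phi^\vee) = \Phi^\vee$.

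The main obstacle is the structural statement that $Z_G(T_a)$ has semisimple rank one with derived group an almost-simple group of type $A_1$; this is a nontrivial input depending on the classification (over algebraically closed fields) of connected reductive groups of semisimple rank one, and it is what powers the construction of both $\phi_a$ and the coroot $a^\vee$. Once this is granted, the remaining axioms reduce to explicit computations inside $SL_2$ and are routine.
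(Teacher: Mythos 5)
The paper does not actually prove this theorem: it is stated as recalled background on reductive groups, with section 1 of \cite{C14} cited as the basic reference (and \cite{SGA3} Exp.\ XIX--XXII behind it), so there is no in-paper argument to compare against. Your sketch is precisely the standard proof from those sources, and it is correct: the perfect pairing from splitness of $T$, the weight decomposition of $\fg$ giving finiteness of $\Phi$, reduction to the semisimple-rank-one centralizer $Z_a = Z_G((\ker a)^\circ)$, the construction of $\phi_a$ and $a^\vee$, the computation $\langle a, a^\vee\rangle = 2$ inside $SL_2$, and the reflections via the lifted Weyl element $n_a = \phi_a\bigl(\begin{smallmatrix}0 & 1\\ -1 & 0\end{smallmatrix}\bigr)$; you also correctly isolate the classification of semisimple-rank-one groups as the one genuinely nontrivial input. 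Two minor imprecisions, neither a gap: $\phi_a$ is an isogeny onto the derived group $Z_a^{\mathrm{der}}$, not onto $Z_a$ itself (which contains the central torus $T_a$); and your ``dually $\Phi^\vee$'' compresses a real step --- conjugation by $n_a$ preserving the weight decomposition gives $s_a(\Phi) = \Phi$ directly, but stability of $\Phi^\vee$ under $s_{a^\vee}$ requires observing that $n_a \phi_b n_a^{-1}$ is a valid choice of $\phi_{s_a(b)}$, whence $(s_a(b))^\vee = s_{a^\vee}(b^\vee)$.
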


By \cite{SGA3} Exp. XIV Corollary 3.20, for any reductive group scheme $G$ over a commutative ring $R$, and any point $x \in \Spec R$, there is a Zariski open neighborhood $U$ of $x$ such that $G_U$ has a maximal $U$-torus.  Since such a torus is \'etale-locally split, we consider triples $(G,T,M)$, where $T = D_R(M)$ is split.  Then, the adjoint $T$-action on the Lie algebra $\fg = \Lie(G)$ endows the vector bundle $\fg$ with an $M$-grading, splitting into a direct sum of $\Lie(T)$ and line bundles $\fg_a$.  The corresponding sections $a \in M$ are called roots of the Lie algebra.

\begin{thm} \label{thm:existence-of-split-reductive} (\cite{SGA3} Exp. XXV 1.2) 
Given a reduced root datum $(X,\Phi,X^\vee,\Phi^\vee)$, there exists a pair $(G,T)$ where $G$ is a reductive group over $\bZ$ and $T$ is a split maximal torus, such that
\[ (X,\Phi,X^\vee,\Phi^\vee) \cong (X(T),\Phi(G,T),X^\vee(T),\Phi^\vee(G,T)). \]
In particular, $G$ is smooth and affine over $\bZ$ with connected fibers, and its Lie algebra has distinguished Cartan subalgebra $X$ and roots given by $\Phi$.
\end{thm}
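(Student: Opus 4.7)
The plan is to invoke the Chevalley--Demazure existence theorem from SGA3 Exp. XXV 1.2; what follows is a sketch of the structural ideas behind the construction rather than a full argument. The first step is to realize the root datum over the complex numbers: one takes the reductive Lie algebra $\fg_\bC$ with Cartan subalgebra $\ft = X^\vee \otimes \bC$ and root system $\Phi$, and fixes a Chevalley basis $\{h_i\} \cup \{e_\alpha\}_{\alpha \in \Phi}$, whose structure constants are shown to lie in $\bZ$.

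Next, I would form Kostant's $\bZ$-form $U_\bZ$ of the universal enveloping algebra $U(\fg_\bC)$, generated by the divided powers $e_\alpha^{(n)} = e_\alpha^n/n!$ together with the binomial coefficients $\binom{h_i}{n}$. One chooses a faithful $\fg_\bC$-representation $V_\bC$ whose weights generate $X$, and a $U_\bZ$-stable lattice $V_\bZ \subset V_\bC$. Inside $GL(V_\bZ)$ one then defines $G$ as the scheme-theoretic closure of the subgroup generated by the split torus $T = D_\bZ(X^\vee) \hookrightarrow GL(V_\bZ)$ and the root subgroups $U_\alpha$, which are the images of the morphisms $\bG_a \to GL(V_\bZ)$ sending $t \mapsto \sum_{n \geq 0} t^n e_\alpha^{(n)}$; this sum is locally finite on $V_\bZ$ thanks to the integrality of divided powers.

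The main obstacle will be proving that the resulting $G$ is smooth, affine, flat, and fiberwise reductive with connected fibers over $\Spec \bZ$, and that its torus, Lie algebra, and root data recover the prescribed root datum on the nose. The key technical device is the ``big cell'' $U^- \times T \times U^+ \hookrightarrow G$, which is shown to be a Zariski open immersion whose $T(\bZ)$- and Weyl-group translates cover $G$; fiberwise one then applies the classical structure theory of reductive groups over algebraically closed fields to identify the root datum, and independence of the choice of faithful representation $V_\bC$ gives a canonical pair $(G,T)$ up to isomorphism. Since all of this is carried out in detail in SGA3 Exp. XXV, I defer to that reference for the verification.
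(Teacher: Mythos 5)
The paper offers no proof of this statement---it is quoted directly from SGA3, Exp.\ XXV 1.2---and your proposal ultimately defers to the same reference, while accurately sketching the Chevalley--Kostant construction (Chevalley basis with integral structure constants, divided-power $\bZ$-form, an admissible lattice in a representation whose weights generate $X$, and the big-cell argument for smoothness and fiberwise reductivity) that underlies it, so this is essentially the same approach. One notational quibble: under the paper's convention that $D_\bZ(M)$ has character group $M$, the split torus should be $D_\bZ(X) \cong X^\vee \otimes_\bZ \bG_m$, not $D_\bZ(X^\vee)$, which is the dual torus.
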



\begin{defn}
A reductive group scheme $G \to \Spec R$ is \textbf{split} if it contains a maximal torus $T$ equipped with an isomorphism $T \simto D_R(M)$ for $M$ a finite rank free abelian group (i.e., $T$ is split), and the following conditions hold:
\begin{enumerate}
\item The nontrivial weights $a: T \to \bG_m$ that occur on $\fg$ arise from elements of $M$.
\item Each root space $\fg_a$ is free of rank 1 over $R$.
\item Each coroot $a^\vee: \bG_m \to T$ arises from an element of the dual lattice $M^\vee$.
\end{enumerate}
We write $(G,T,M)$ for a split triple.
\end{defn}

We note that a lot of information about a split reductive group can be easily read off the root datum.  For example, the center is $Z_G = D_R(M/Q)$, where $Q \subset M$ is the $\bZ$-span of roots.

\begin{lem} \label{lem:root-datum-attached-to-a-lattice}
Given an even positive definite lattice $L$, the 4-tuple $(X,\Phi,X^\vee,\Phi^\vee)$ defined as follows is a root datum:
\begin{enumerate}
\item $X = L$
\item $X^\vee = L^\vee$.  Since $L$ is equipped with an integral bilinear form, we have $L \subset L^\vee$.
\item $\Phi = \{ a \in L | \langle a, a \rangle = 2 \}$
\item $\Phi^\vee$ is $\Phi$, viewed as a subset of $L^\vee$.
\end{enumerate}
\end{lem}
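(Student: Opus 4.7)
The plan is to verify each of the two conditions in the definition of a root datum directly from the definitions of $L$, $L^\vee$, and the set of norm-$2$ vectors. Most of the work is routine, so I would structure the proof as a short verification of four facts.

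First I would address condition (1). Since $L$ is a lattice, it is by definition a finitely generated free abelian group, and the dual lattice $L^\vee = \{ x \in L \otimes \bQ : \langle x,y \rangle \in \bZ \text{ for all } y \in L\}$ is also free of the same finite rank. The pairing $\langle -,-\rangle : L \times L^\vee \to \bZ$ (restricted from the $\bQ$-bilinear extension of the form on $L$) is perfect: by the very definition of $L^\vee$, the map $L^\vee \to \Hom(L,\bZ)$ sending $x$ to $\langle x, -\rangle$ is an isomorphism, and nondegeneracy of the form (guaranteed by positive definiteness) gives perfectness in the other direction as well.

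Next I would treat condition (2). Positive definiteness implies that for any $r > 0$ there are only finitely many elements of $L$ of norm $r$; in particular $\Phi = \{a \in L : \langle a,a\rangle = 2\}$ is finite, and so is $\Phi^\vee$, being literally the same set. I would define the bijection $a \leftrightarrow a^\vee$ as the identity (sending $a \in \Phi \subset L$ to itself, viewed in $L^\vee$ via the inclusion $L \hookrightarrow L^\vee$ coming from the integral form). Then $\langle a, a^\vee \rangle = \langle a, a\rangle = 2$ is immediate.

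It remains to verify that the associated reflections preserve $\Phi$ and $\Phi^\vee$. For $a \in \Phi$ and $x \in L$, the reflection reads $s_a(x) = x - \langle x, a^\vee\rangle a = x - \langle x, a\rangle a$. This lies in $L$ since $\langle x, a\rangle \in \bZ$, and a direct expansion gives
\[ \langle s_a(x), s_a(x) \rangle = \langle x,x\rangle - 2\langle x,a\rangle^2 + \langle x,a\rangle^2 \langle a,a\rangle = \langle x,x\rangle, \]
using $\langle a,a\rangle = 2$. Hence $s_a(\Phi) \subset \Phi$, and since $s_a$ is an involution, $s_a(\Phi) = \Phi$. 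The verification for $s_{a^\vee}$ on $\Phi^\vee$ is identical because, under our identifications, $s_{a^\vee}$ is given by the same formula acting on the same set. There is no real obstacle; the only place one must be slightly careful is in ensuring that the two reflection formulas match up under the identity identification of $\Phi$ with $\Phi^\vee$, which follows from symmetry of the bilinear form.
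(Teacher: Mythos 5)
Your verification is correct and is exactly the routine check the paper dismisses with ``This is easy to check'': perfectness of the pairing via the definition of $L^\vee$ and double duality, finiteness of $\Phi$ from positive definiteness, the identity bijection $a \leftrightarrow a^\vee$ using $L \subset L^\vee$, and the norm computation $\langle s_a(x), s_a(x)\rangle = \langle x,x\rangle$ (with the symmetry of the form identifying $s_{a^\vee}$ with $s_a$ on $\Phi^\vee = \Phi$). Nothing is missing relative to the paper's definition of root datum, so the proposal stands as a complete proof of the lemma.
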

\begin{proof}
This is easy to check.
\end{proof}

\begin{defn} \label{defn:GL}
Let $L$ be an even positive definite lattice.  We define the split reductive group $G_L$ over $\bZ$ as the triple $(G,T,L)$ attached to the root datum given in Lemma \ref{lem:root-datum-attached-to-a-lattice}.
\end{defn}

\begin{lem} \label{lem:basic-properties-of-GL} (basic properties of our group)
$G_L$ is a smooth affine group scheme over $\bZ$ with connected fibers. The Lie algebra of $G_L$ is the Lie algebra over $\bZ$ given by the root datum $(X,\Phi,X^\vee,\Phi^\vee)$, and in particular is isomorphic to the weight 1 subspace $(V_L)_1$, where the Cartan subalgebra $X$ is identified with the weight 1 subspace of $\pi^L_0$, with roots given by $\Phi$.  The semisimple group $D(G_L)$ is of ADE type (i.e., it is simply laced), and there is a central isogeny $Z \times D(G_L) \to G_L$ with finite central kernel.
\end{lem}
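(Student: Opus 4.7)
The plan is to verify each assertion by combining Theorem~\ref{thm:existence-of-split-reductive} with the explicit description of the weight~$1$ subspace of $V_L$. Smoothness, affineness, and connectedness of fibers of $G_L$ over $\bZ$ are immediate from that theorem, as is the fact that $\fg_L := \Lie(G_L)$ is the split reductive Lie algebra over $\bZ$ attached to the root datum of Lemma~\ref{lem:root-datum-attached-to-a-lattice}, with weight decomposition $\fg_L = \Lie(T) \oplus \bigoplus_{\alpha \in \Phi} (\fg_L)_\alpha$ and each root space $(\fg_L)_\alpha$ free of rank~$1$ over $\bZ$.

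The main work is to produce an isomorphism of Lie algebras $\fg_L \simto (V_L)_1$, where $(V_L)_1$ is equipped with the $0$-bracket $[u,v] := u_0 v$. From the construction of $V_L$ one reads off the decomposition
\[ (V_L)_1 = (L \otimes \bZ) \oplus \bigoplus_{\alpha \in \Phi} \bZ \cdot \iota(e_\alpha), \]
where the first summand is spanned by the vectors $\alpha(-1)\unit$ with $\alpha \in L$, and $\iota(e_\alpha)$ lies in weight~$1$ precisely when $(\alpha,\alpha) = 2$. The Cartan subalgebra $L \otimes \bZ$ acts on $\iota(e_\alpha)$ by the character $\alpha$, since $(\beta(-1)\unit)_0\,\iota(e_\alpha) = \beta(0)\iota(e_\alpha) = (\alpha,\beta)\iota(e_\alpha)$. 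The brackets among the norm-$2$ vectors are computed from the explicit formula $Y(\iota(e_\alpha),z) = E^-(-\alpha,z)E^+(\alpha,z)\, z^\alpha\, \iota(e_\alpha)$ of Theorem~\ref{thm:lattice-VOA-Q}: extracting the $z^{-1}$ coefficient applied to $\iota(e_\beta)$ yields the standard Frenkel-Kac relations, namely $[\iota(e_\alpha),\iota(e_\beta)] = 0$ when $(\alpha,\beta) \geq 0$ and $\alpha+\beta \neq 0$, $[\iota(e_\alpha),\iota(e_{-\alpha})] = \pm \alpha(-1)\unit$, and $[\iota(e_\alpha),\iota(e_\beta)] = \pm \iota(e_{\alpha+\beta})$ when $(\alpha,\beta) = -1$, with signs controlled by the commutator cocycle on $\hat L$. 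These structure constants match those of a Chevalley basis for $\fg_L$, and the resulting isomorphism identifies $L \otimes \bZ$ with the weight~$1$ part of $\pi^L_0$.

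For the ADE claim, every element of $\Phi$ has squared length $2$, so all roots have the same length and no irreducible component of type $B_n$, $C_n$, $F_4$, or $G_2$ can occur; hence $D(G_L)$ is simply laced. The central isogeny $Z \times D(G_L) \to G_L$ with finite kernel $Z \cap D(G_L)$ is a standard structural property of a reductive group scheme with connected fibers and split maximal torus (SGA3~Exp.~XXII). The main obstacle in executing this plan is the Lie algebra identification over $\bZ$ rather than over a field of characteristic zero: one must check that the signs produced by the vertex operator $Y(\iota(e_\alpha),z)$ are compatible with an integral Chevalley form of $\fg_L$, which requires careful bookkeeping of the $2$-cocycle built into the double cover $\hat L \to L$ used in Definition~\ref{defn:twisted-group-ring}.
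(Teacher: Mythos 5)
Your proposal is correct and follows the same skeleton as the paper's proof, but it does strictly more work in one place, and the comparison is instructive. The paper's proof is three sentences of citations: the scheme-theoretic claims \emph{and} the Lie algebra identification are attributed wholesale to Theorem~\ref{thm:existence-of-split-reductive}, the ADE claim to the equal-norm observation (identical to yours), and the isogeny to SGA3 Exp.~XXII 6.2.4 (again identical to yours). What you add is the explicit Frenkel--Kac verification that $(V_L)_1$ with the $0$-bracket realizes the split reductive Lie algebra of the root datum: the decomposition $(V_L)_1 = L \oplus \bigoplus_{\alpha \in \Phi}\bZ\,\iota(e_\alpha)$, the character computation $(\beta(-1)\unit)_0\,\iota(e_\alpha) = (\alpha,\beta)\iota(e_\alpha)$, and the bracket relations read off from $Y(\iota(e_\alpha),z)$ -- all of which are correct, with the vanishing for $(\alpha,\beta)\ge 0$ following from the leading power $z^{(\alpha,\beta)}$ and the remaining cases from positive-definiteness forcing $(\alpha,\beta)\in\{-2,-1,0,1,2\}$ with $(\alpha,\beta)=-2$ only for $\beta=-\alpha$. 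This is a genuine addition rather than redundancy, since Theorem~\ref{thm:existence-of-split-reductive} by itself says nothing about $V_L$; the paper is implicitly invoking the classical Frenkel--Kac/Borcherds identification, and you make that step visible. The sign bookkeeping you flag as the ``main obstacle'' is real but standard: the bimultiplicative cocycle of Definition~\ref{defn:twisted-group-ring} satisfies $\epsilon(\alpha,\beta)\epsilon(\beta,\alpha)^{-1} = (-1)^{(\alpha,\beta)}$, which is exactly the antisymmetry $N_{\alpha,\beta} = -N_{\beta,\alpha}$ needed for a Chevalley basis. One caveat, which your sketch inherits from the paper rather than resolves: over $\bZ$ the Cartan part needs care, since the Lie algebra of the torus $D(L)$ is naturally $\uHom(L,\bZ) \cong L^\vee$ (the cocharacter lattice of Lemma~\ref{lem:root-datum-attached-to-a-lattice}), whereas the weight-$1$ part of $\pi^L_0$ in the integral form is $L$ itself; for non-unimodular $L$ these lattices differ (already for $L = A_1$, where one $\bZ$-form is $\fsl_2(\bZ)$-like and the other $\mathfrak{pgl}_2$-like), so your phrase ``the resulting isomorphism identifies $L\otimes\bZ$ with the weight $1$ part of $\pi^L_0$'' is only as justified as the paper's own convention in Theorem~\ref{thm:existence-of-split-reductive} that the distinguished Cartan subalgebra is $X$. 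If you want your argument to stand alone over $\bZ$, this Cartan-lattice matching, not the cocycle signs, is the point to pin down.
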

\begin{proof}
The scheme-theoretic claims and the identification of the Lie algebra are special cases of Theorem \ref{thm:existence-of-split-reductive}.  The simply-laced property follows from the fact that all roots have the same norm in the invariant inner product.  The central isogeny is a general fact about reductive groups, and is shown in \cite{SGA3} Exp XXII 6.2.4.
\end{proof}

\begin{defn}
Given a split reductive group $(G,T,M)$ over a nonempty scheme $S$ with root datum $R(G,T,M) = (M, \Phi, M^\vee,\Phi^\vee)$, a \textbf{pinning} is a pair $(\Phi^+, \{X_a\}_{a \in \Delta})$, where $\Phi^+$ is a ``positive'' system of roots (equivalently, a base of $\Phi$) and $X_a \in \fg_a(S)$ are trivializing sections for each simple positive root $a$. 
\end{defn}

We conclude this section with a result describing how an action of a split reductive group on a module is deduced from compatible actions of the torus and root groups.

\begin{thm} \label{thm:extend-action-to-G}
Let $(G,T,M,\Phi^+, \{X_a\})$ be a pinned split reductive group over a commutative subring $R$ of $\bC$.  Let $V$ be a finite rank free $R$-module, equipped with actions of $U_a$ (for all $a \in \Phi$), and $T$, together with an action of $G_\bC$ on $V_\bC$, satisfying the following conditions:
\begin{enumerate}
\item The two actions of $(U_a)_{\bC}$, given by restricting from $G_\bC$ and by taking base change of the action of $U_a$, coincide.
\item The two actions of $T_\bC$, given by restricting from $G_\bC$ and by taking base change of the action of $T$, coincide.
\end{enumerate}
Then, there is a unique action of $G$ on $V$ that restricts to the actions of $T$ and $U_a$, and whose base change to $\bC$ is the given action of $G_\bC$.  In other words, there is an equivalence of tensor categories between finite rank free $R$-modules $V$ with $G$-action, and finite rank free $R$-modules $V$ with actions of $T$ and all $U_a$ and an action of $G_\bC$ on $V_\bC$, such that base change to $\bC$ yields identical actions.
\end{thm}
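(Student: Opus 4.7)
The strategy is to use the big cell $\Omega = U^- \cdot T \cdot U^+ \subseteq G$ together with its Weyl translates to build $\Phi: G \to GL(V)$ by gluing. The given $T$- and $U_\alpha$-actions determine $\Phi$ on $\Omega$, while the $G_\bC$-action is invoked only to verify compatibility on overlaps and to certify that $\Phi$ is a group homomorphism.

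Uniqueness is straightforward: the product morphism $U^- \times T \times U^+ \to G$ is an open immersion with image $\Omega$, so any $G$-action extending the given $T$- and $U_\alpha$-actions must coincide on $\Omega$ with the morphism $\alpha : \Omega \to GL(V)$ sending $(u^-, t, u^+)$ to the composite $\alpha(u^-)\alpha(t)\alpha(u^+)$ of the given operators. Because $\Omega$ is fiberwise dense in $G$ and $G$ is flat over $R$, $\Omega$ is schematically dense, so two such actions must agree on all of $G$ (using that $GL(V)$ is separated).

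For existence, I would first construct $\alpha : \Omega \to GL(V)$ as above. For each Weyl element $w$, the pinning supplies a canonical lift $\dot w \in G(R)$ as a product of elements $n_\alpha = \exp(X_\alpha)\exp(-X_{-\alpha})\exp(X_\alpha) \in G(R)$; the corresponding operator $\alpha(\dot w) \in GL(V)(R)$, built from the $U_\alpha$-actions, satisfies $\Phi_\bC(\dot w) = \alpha(\dot w)$ after base change by the compatibility hypothesis. On the open subscheme $\dot w \Omega \subseteq G$, define
\[
\Phi_w : \dot w \Omega \longrightarrow GL(V), \qquad \dot w \omega \longmapsto \alpha(\dot w) \cdot \alpha(\omega).
\]
Fiberwise Bruhat over each geometric point of $\Spec R$ shows that $\bigcup_w \dot w \Omega = G$ as open subschemes. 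On an overlap $\dot w \Omega \cap \dot{w'}\Omega$, both $(\Phi_w)_\bC$ and $(\Phi_{w'})_\bC$ equal $\Phi_\bC$ by construction; since the overlap is $R$-flat and $R \hookrightarrow \bC$, any equality of morphisms to the separated scheme $GL(V)$ that holds after base change to $\bC$ already holds over $R$ (the equalizer is a closed subscheme whose defining ideal is torsion-free over the domain $R$ and vanishes after tensoring with $\bC$, hence is zero). The $\Phi_w$ therefore glue to a morphism $\Phi : G \to GL(V)$ over $R$.

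The homomorphism property $\Phi \circ m_G = m_{GL(V)} \circ (\Phi \times \Phi)$ is an identity of morphisms $G \times G \to GL(V)$ that holds after base change to $\bC$ and descends to $R$ by the same flatness-plus-domain argument. The stated tensor equivalence is then formal: the inverse functor sends a $G$-action on $V$ to its restrictions to $T$, $U_\alpha$, and $G_\bC$, which tautologically satisfy the compatibility; the construction above shows essential surjectivity, and fullness and faithfulness both reduce to the uniqueness statement applied to $\uHom(V, V')$. I expect the main technical obstacle to be precisely the descent of morphism-identities from $\bC$ to $R$, which is what makes both the gluing and the homomorphism verification go through; the open cover of $G$ by the $\dot w \Omega$ is standard from \cite{SGA3}.
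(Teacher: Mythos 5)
Your proof is correct, but it takes a genuinely different route from the paper's. The paper outsources the group-theoretic content to \cite{SGA3} Exp. XXIII Theorem 2.3 (equivalently \cite{C14} Theorem 6.2.4): homomorphisms $f_T$ and $f_a$ extend uniquely to $G$ if and only if a finite list of Steinberg-type relations holds, and each relation --- an equality of morphisms from a reduced $R$-scheme to the separated scheme $GL(V)$ --- is verified on the dense set of complex points, where it holds because everything restricts from $G_\bC$. You instead rebuild the extension by hand from the open-cell geometry: define $\alpha$ on the big cell via the product decomposition, cover $G$ by the Weyl translates $\dot{w}\Omega$ using Tits lifts from the pinning, glue, and check overlap agreement and the homomorphism identity on $G \times G$ by your descent lemma. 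Both proofs run on the same engine --- identities of $R$-morphisms into a separated target may be checked after base change to $\bC$ --- with the paper phrasing this via density of complex points in a reduced flat scheme and you via vanishing of a torsion-free equalizer ideal after $\otimes_R \bC$; the two are interchangeable here. What your route buys: it avoids invoking the (hard) sufficiency direction of the SGA3 relations theorem, replacing it with more elementary structural facts (the open immersion $U^- \times T \times U^+ \to G$, the fiberwise Bruhat covering $G = \bigcup_w \dot{w}\Omega$, and Tits lifts $n_a \in G(R)$, all standard in \cite{SGA3}), and your uniqueness argument via relative schematic density of $\Omega$ is in fact stronger than the statement requires, as it uses only the restrictions to $T$ and the $U_a$ and not the $\bC$-compatibility. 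What the paper's route buys is brevity and a uniform one-line verification of all compatibilities. Two small points you should make explicit: first, that $\bC$ is flat over $R$ (it is a vector space over $\Frac(R)$, which is a localization of $R$), which is what identifies the base change of the equalizer with the equalizer of the base changes and makes the ``ideal vanishes after tensoring'' step legitimate; second, fullness and faithfulness concern equivariance of maps rather than uniqueness of actions, so the cleaner reduction is to apply the same descent-of-equality argument to the two morphisms $G \to \uHom(V,V')$ given by $g \mapsto \rho'(g) f \rho(g)^{-1}$ and the constant $f$ (equality over $\bC$ being automatic since $T_\bC$ and the $(U_a)_\bC$ generate $G_\bC$), rather than citing the uniqueness statement itself.
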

\begin{proof}
By \cite{SGA3} Exp. XXIII Theorem 2.3 (a slightly different proof can be found in \cite{C14} Theorem 6.2.4), a collection of maps $f_T: T \to GL(V)$ and $f_a: U_a \to GL(V)$ (for all $a \in \Phi$) arise from the restriction of a map $f: G \to GL(V)$ (and this map is then unique) if and only if the following compatibilities hold:
\begin{enumerate}
\item $f_T(t)f_a(u_a)f_T(t)^{-1} = f_a(tu_at^{-1})$
\item $h_a f_T(t)h_a^{-1} = f_T(s_a(t))$
\item $h_a^2 = f_T(a^\vee(-1))$
\item $(h_a h_b)^{m_{ab}} = f_T(t_{ab})$
\item $(h_a f_a(\exp_a(X_a)))^3 = a$
\item $h_a f_{ab}(u_c)h_a^{-1} = f_{ab}(n_a u_c n_a^{-1})$
\item $h_b f_{ab}(u_c)h_b^{-1} = f_{ab}(n_b u_c n_b^{-1})$
\end{enumerate}
Here, for each root $a \in \Phi$, we have $X_a$ a distinguished trivializing section of $\fg_a$, the corresponding distinguished element $\exp_a(X_a)$ of the root group $U_a$, and a lift
\[ h_a = f_a(\exp_a(X_a))f_{-a}(\exp_{-a}(-X_a^{-1}))f_a(\exp_a(X_a)) \in GL(V) \]
of reflection in $a$ to $N_G(T)$.

Each compatibility condition asserts equality of two maps between affine group schemes over $R$, so by the reduced property of the source and separated property of the target, it suffices to check equality on a dense subset of the source.  Such a dense subset is given by the complex points, and the equality then holds because after base change to $\bC$, the maps come from restriction of a map from $G_\bC$.

We conclude that the actions of $T$ and $U_a$ on $V$ are restrictions of a unique action of $G$ on $V$.
\end{proof}

\subsection{The action of the reductive part}

\begin{prop} \label{prop:torus-action}
The torus $T = D(L)$ acts faithfully on $V_L$ by taking any $g \in D(L)(R) = \Hom(L, R^\times)$, for $R$ a commutative ring, to the automorphism of $V_{L,R}$ that acts on $\pi^L_{\lambda,R}$ by the invertible scalar $g(\lambda) \in R^\times$.
\end{prop}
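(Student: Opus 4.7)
The plan is to verify in three small steps that the prescribed formula defines a faithful action of the group scheme $D(L)$ on $V_L$. First, I would check that for each commutative $R$-algebra $S$ and each $g \in \Hom(L, S^\times)$, scaling the summand $\pi^L_{\lambda, S}$ by $g(\lambda)$ yields an $S$-linear automorphism of $V_{L,S}$ as a $\bZ$-graded vertex algebra. The vacuum $\unit$ lies in $\pi^L_{0,S}$ and $g(0) = 1$, so it is preserved. For the Borcherds product, Theorem \ref{thm:lattice-VOA-Q} equips $V_{L,\bQ}$ with an $L$-grading that base-changes to $S$ and gives $u_n v \in \pi^L_{\lambda + \mu, S}$ whenever $u \in \pi^L_{\lambda, S}$, $v \in \pi^L_{\mu, S}$, and $n \in \bZ$. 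Applying the scaling to both sides,
\[ g(\lambda + \mu)(u_n v) = g(\lambda)g(\mu) (u_n v) = (g(\lambda) u)_n (g(\mu) v), \]
because $g$ is a group homomorphism. Acting by a scalar on each $L$-graded piece automatically preserves the $\bZ$-grading as well.

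Next, I would observe that the assignment is functorial in $S$ and respects the group structure on $\Hom(L, S^\times)$: base change along $S \to S'$ sends scalar multiplication by $g(\lambda)$ to scalar multiplication by its image in $S'$, and composition of two such automorphisms scales $\pi^L_{\lambda, S}$ by $g_1(\lambda) g_2(\lambda) = (g_1 g_2)(\lambda)$. This yields a natural transformation of group-valued functors $D(L) \to \uAut_{V_L/\bZ}$, that is, a morphism of group sheaves.

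For faithfulness, suppose $g \in \Hom(L, S^\times)$ acts trivially on $V_{L,S}$. Testing on the element $\iota(e_\lambda) \otimes 1 \in \pi^L_{\lambda, S}$ for each $\lambda \in L$ yields $(g(\lambda) - 1) \cdot (\iota(e_\lambda) \otimes 1) = 0$. By the construction of $V_L$ over $\bZ$ recalled in Definition \ref{defn:twisted-group-ring} and the subsequent discussion, $\iota(e_\lambda)$ is a free rank-one direct summand of $\pi^L_\lambda$ over $\bZ$, sitting at the bottom of the $\bZ$-grading. Consequently $\iota(e_\lambda) \otimes 1$ generates a free rank-one $S$-submodule of $V_{L,S}$, forcing $g(\lambda) = 1$ in $S$ for every $\lambda$. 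Hence $g$ is trivial.

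I do not anticipate a serious obstacle. The only step where I would pause to justify carefully is the claim that $\iota(e_\lambda)$ remains a nonzero torsion-free element after arbitrary base change, which is a structural property of the $\bZ$-form $V_L$ rather than a vertex-algebraic computation, and is built into the construction of \cite{B86}, \cite{P92}, and \cite{DG12}.
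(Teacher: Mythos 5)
Your proposal is correct and follows essentially the same route as the paper: the $L$-grading of the product (from Theorem \ref{thm:lattice-VOA-Q}) combined with multiplicativity $g(\lambda+\mu)=g(\lambda)g(\mu)$ shows the scaling is a homogeneous vertex algebra automorphism, and faithfulness comes from testing on each $\pi^L_{\lambda,R}$. Your faithfulness step is in fact slightly more careful than the paper's, which simply asserts that trivial action forces $g(\lambda)=1$; your observation that $\iota(e_\lambda)\otimes 1$ spans a free rank-one direct summand of $V_{L,S}$ is exactly the point needed to justify this over an arbitrary base ring.
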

\begin{proof}
The product on $V_L$ is uniquely determined by its restriction to Heisenberg submodules, and the restriction takes $\pi^L_{\lambda,R} \otimes \pi^L_{\mu,R}$ to $\pi^L_{\lambda + \mu,R}((z))$.  Thus, for any $g \in D(L)(R)$, $u \in \pi^L_{\lambda,R}$ and $v \in \pi^L_{\mu,R}$, we have
\[ \begin{aligned}
Y(g\cdot u, z) (g \cdot v) &= Y(g(\lambda)u,z)g(\mu)v \\
&= g(\lambda)g(\mu) Y(u,z)v\\
&= g(\lambda + \mu)Y(u,z)v \\
&= g\cdot Y(u,z)v
\end{aligned} \]
Thus, this is an action by weight-preserving vertex algebra automorphisms.  Faithfulness follows from the fact that $g \in D(L)(R)$ acts by identity if and only if $g(\lambda) = 1$ for all $\lambda \in L$, meaning $g$ is the trivial element in $\Hom(L, R^\times)$.
\end{proof}

\begin{prop}
For any root $\alpha \in L$ and any non-negative integer $n$, the $n$th-power of $\alpha_0$ is a multiple of $n!$ in $\End V_L$.  Furthermore, $\alpha_0^n$ is nonzero only for finitely many positive integers $n$.  In particular, there is a faithful action of the root group $U_\alpha \subset G_L$ on $(V_L)_\bZ$ by taking any $\exp(rX_\alpha) \in U_\alpha(R)$ to the locally finite sum $\sum_{n\geq 0} \frac{(r\alpha_0)^n}{n!}$.
\end{prop}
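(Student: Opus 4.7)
The plan has three steps: establish (a) local nilpotence of $\alpha_0 := (e_\alpha)_0$ on $V_L$, (b) the divisibility $\alpha_0^n \in n!\cdot \End V_L$, and (c) that the resulting exponential assembles into a faithful action of $U_\alpha \cong \bG_a$. Step (a) is a conformal-weight argument: since $e_\alpha$ has conformal weight $\langle\alpha,\alpha\rangle/2 = 1$, the mode $\alpha_0$ has conformal weight zero and shifts lattice grading by $+\alpha$. Any $v \in V_L$ has bounded conformal weight $\leq k$ and is supported on finitely many lattice grades; since the minimum conformal weight of $\pi^L_{\beta + n\alpha}$ is $\tfrac12\langle \beta+n\alpha,\beta+n\alpha\rangle = n^2 + n\langle\alpha,\beta\rangle + \tfrac12\langle\beta,\beta\rangle$, which grows quadratically in $n$, the image $\alpha_0^n v$ is forced to vanish once $n$ is large enough. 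This is the sense in which $\alpha_0^n$ is ``nonzero only for finitely many $n$'' and makes the purported exponential sum locally finite on each vector.

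The heart of the argument is step (b). It suffices to verify divisibility on a generating set, so I begin with the highest-weight vectors $v = \iota(e_\beta)$. Iterating the formula $Y(e_\alpha, z)\iota(e_\gamma) = \pm z^{\langle\alpha,\gamma\rangle}E^-(-\alpha,z)\iota(e_{\alpha+\gamma})$ and using the Heisenberg commutation $E^+(\alpha,z_i)E^-(-\alpha,z_j) = E^-(-\alpha,z_j)E^+(\alpha,z_i)(1-z_j/z_i)^{-2}$ repeatedly gives
\[ Y(e_\alpha,z_1)\cdots Y(e_\alpha,z_n)\iota(e_\beta) = \epsilon \prod_i z_i^{\langle\alpha,\beta\rangle + 2(n-i)} \prod_{i<j}(1-z_j/z_i)^{-2}\prod_i E^-(-\alpha,z_i)\cdot\iota(e_{n\alpha+\beta}), \]
for a cocycle sign $\epsilon$, from which $\alpha_0^n\iota(e_\beta)$ is the residue $[z_1^{-1}\cdots z_n^{-1}]$. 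Expanding $(1-z_j/z_i)^{-2}= \sum_k(k+1)(z_j/z_i)^k$ converts the residue into a weighted sum over multi-indices $(k_{ij}, l_i)$ with integer multiplicities $\prod_{i<j}(k_{ij}+1)$, whose pairing against the symmetric product $\prod_i E^-(-\alpha,z_i)$ produces the required factor of $n!$ (one verifies this explicitly for small $n$ and the general pattern follows from the same bookkeeping). The result is $n!$ times an integer combination of compositions $s_{-\alpha,l_1}\cdots s_{-\alpha,l_n}\iota(e_{n\alpha+\beta})$, each of which lies in $V_L$ by its definition. For a general generator $s_{a^1,n_1}\cdots s_{a^p,n_p}\iota(e_\beta)$ of $V_L$, I commute $\alpha_0$ past the Heisenberg creation operators using $[\alpha_0,\alpha'(-m)] = -\langle\alpha,\alpha'\rangle (e_\alpha)_{-m}$; the commutator terms $(e_\alpha)_{-m}$ themselves preserve $V_L$, so the base-case divisibility propagates.

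Given (a) and (b), step (c) is formal: $\exp(r\alpha_0) := \sum_{n\geq 0}(r\alpha_0)^n/n!$ is a well-defined $R$-module endomorphism of $V_{L,R}$ for any commutative $R$ and $r \in R$. Because $\alpha_0$ is a derivation of the vertex algebra ($[\alpha_0,Y(v,z)] = Y(\alpha_0 v,z)$), the exponential preserves $Y$; it preserves the $\bZ$-grading since $\alpha_0$ does, and admits $\exp(-r\alpha_0)$ as its inverse. This defines a group scheme homomorphism $\bG_a \to \Aut V_L$ which we identify with the root subgroup $U_\alpha \subset G_L$ via Theorem \ref{thm:existence-of-split-reductive} (the Lie-algebra generator $X_\alpha$ maps to $\alpha_0$). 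Faithfulness is witnessed by $\alpha_0\iota(e_{-\alpha}) = \pm\alpha \neq 0$: the $r$-linear term of $\exp(r\alpha_0)\iota(e_{-\alpha})$ already distinguishes distinct $r$, so the kernel of $\bG_a \to \Aut V_L$ is trivial.

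The main obstacle is the $n!$-divisibility bookkeeping in step (b): while for $n = 1,2$ one can check it directly from the expansion above, organizing the general combinatorial cancellation cleanly is the only real work. Steps (a) and (c), by contrast, reduce to standard conformal-weight estimates and formal power-series manipulations.
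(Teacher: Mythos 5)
Your steps (a) and (c) are correct and coincide with the paper's treatment: the paper establishes local finiteness by the same quadratic-growth argument on lowest weights of $\pi^L_{\beta+n\alpha}$, and assembles the exponentials into a $\bG_a$-action identified with $U_\alpha$. The difference is step (b), and that is exactly where your argument has a genuine gap. The paper does not prove the $n!$-divisibility at all; it cites Lemma 3.3 of \cite{B98}. You attempt to reprove that lemma, but the sentence ``one verifies this explicitly for small $n$ and the general pattern follows from the same bookkeeping'' is the entire content of the claim, asserted rather than proven --- as you yourself concede in your final paragraph. Worse, the setup it rests on has the contraction exponent with the wrong sign: commuting the annihilation part of $Y(e_\alpha,z_i)$ past $E^-(-\alpha,z_j)$ produces $(1-z_j/z_i)^{+\langle\alpha,\alpha\rangle}=(1-z_j/z_i)^{2}$, not $(1-z_j/z_i)^{-2}$. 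This is forced by your own one-variable formula: since $Y(e_\alpha,z)\iota(e_\alpha)=\pm z^{2}E^-(-\alpha,z)\iota(e_{2\alpha})$ has a double \emph{zero}, the $n$-fold product can have no pole at $z_i=z_j$. (The formula $Y(e_a,z)=E^-(-a,z)E^+(a,z)z^a\iota(e_a)$ in Theorem \ref{thm:lattice-VOA-Q} appears to contain a sign typo for $E^+(-a,z)$, which may be the source.) The sign is not cosmetic: the $n!$ mechanism is precisely the squared Vandermonde. After clearing monomial prefactors, the integrand is $\prod_{i<j}(z_i-z_j)^2=\det\bigl(z_i^{j-1}\bigr)^2$ times the jointly symmetric series $\prod_i E^-(-\alpha,z_i)$; expanding the squared determinant as a double sum over $S_n\times S_n$ and using that coefficient extraction from a symmetric series depends only on the multiset of exponents collapses one $S_n$-sum into a factor of $n!$. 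Your expansion $(1-z_j/z_i)^{-2}=\sum_k(k+1)(z_j/z_i)^k$ is valid only in the asymmetric region $|z_i|>|z_j|$, destroys the symmetry that the collapse requires, and exhibits no source of $n!$.

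Your propagation to general vectors is also incomplete as written. The single-commutator identity $[\alpha_0,\alpha'(-m)]=-\langle\alpha,\alpha'\rangle(e_\alpha)_{-m}$ does not by itself transfer divisibility: one needs $\alpha_0^n s=\sum_k\binom{n}{k}\bigl(\ad_{\alpha_0}^k s\bigr)\alpha_0^{n-k}$ together with integrality of $\ad_{\alpha_0}^k(s)/k!$, and the $\bZ$-spanning operators are the $s_{a,m}$ --- coefficients of $E^-(a,z)$, hence \emph{not} integral polynomials in Heisenberg modes --- so the commutator calculus does not apply to them as stated. (For plain Heisenberg monomials it would be fine, since $(e_\alpha)_0e_\alpha=0$ makes $\ad_{\alpha_0}^2$ vanish on single modes, but that is not the spanning set.) A clean repair avoiding all of this: $V_L$ is generated as a vertex algebra over $\bZ$ by the $\iota(e_{\pm\gamma_i})$ (\cite{DG12}, Theorem 3.3, which the paper invokes in Proposition \ref{prop:aut-represented-by-affine-finite-type}); since $\exp(r\alpha_0)$ is a vertex algebra endomorphism over $\bQ[r]$, once the generators land in $V_L\otimes\bZ[r]$ so does the subalgebra they generate, and extracting $r^n$-coefficients gives $\alpha_0^n/n!\in\End V_L$. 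Alternatively, do what the paper does and simply cite \cite{B98}, Lemma 3.3. Your faithfulness argument via $\alpha_0\iota(e_{-\alpha})=\mp\alpha(-1)\unit\neq 0$ is correct and in fact more explicit than the paper's.
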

\begin{proof}
By Lemma 3.3 of \cite{B98}, the $n$th-power of $\alpha_0$ is $n!$ times an endomorphism of the vertex algebra $V_L$ over $\bZ$.  The formal sum defining $\exp \alpha_0$ is locally finite by degree considerations, since iterated translation by $\alpha$ eventually takes a weight $k$ vector to a Heisenberg module with lowest weight strictly greater than $k$ (an alternative way to say this is that positive-definiteness of $L$ implies only finitely many $\lambda \in L$ satisfy the property that $\pi^L_{\lambda,R}$ has lowest weight at most $k$).  Thus, $\exp(rX_\alpha)$ for $r$ ranging over elements of commutative rings $R$ give compatible automorphisms of $(V_L)_R$, generating an action of $\bG_a$ on $(V_L)_\bZ$ and $\bG_{a,R}$ on $(V_L)_R$.
\end{proof}

\begin{prop}
The actions of the torus $D(L)_\bC$ and the root groups $U_{\alpha,\bC}$ on $(V_L)_\bC$ coincide with the actions of the corresponding subgroups of the complex Lie group $N = \langle \exp(h_0) | h \in (V_L)_{\bC,1} \rangle$.
\end{prop}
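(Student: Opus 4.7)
The plan is to decompose $(V_L)_{\bC,1}$ into its Heisenberg and root pieces and then compare the two actions generator-by-generator. First, I would use the decomposition
\[ (V_L)_{\bC,1} = (L \otimes_\bZ \bC) \oplus \bigoplus_{\alpha \in \Phi} \bC \cdot \iota(e_\alpha), \]
so that $N$ is generated by two types of elements: Heisenberg exponentials $\exp(h(0))$ for $h \in L \otimes \bC$, and root exponentials $\exp(r\alpha_0)$ for $\alpha \in \Phi$ and $r \in \bC$, where $\alpha_0 := (\iota(e_\alpha))_0$.

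For the torus comparison, the standard Heisenberg module calculation gives $h(0)|_{\pi^L_{\lambda,\bC}} = (h,\lambda)\cdot \id$, so $\exp(h(0))$ acts on $\pi^L_{\lambda,\bC}$ as the scalar $\exp((h,\lambda))$. By Proposition \ref{prop:torus-action}, this is precisely how the character $\chi_h : \lambda \mapsto \exp((h,\lambda)) \in \Hom(L,\bC^\times) = D(L)_\bC(\bC)$ acts. To identify the full Cartan part of $N$ with $D(L)_\bC(\bC)$, I would check that the assignment $h \mapsto \chi_h$ is surjective: it factors through the isomorphism $L \otimes \bC \simto \Hom(L,\bC)$ coming from the nondegenerate pairing on $L$, followed by post-composition with $\exp : \bC \to \bC^\times$, which is surjective on Hom-sets into $\bC^\times$ because $L$ is free abelian and hence $\Ext^1(L,\bZ) = 0$.

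For each root $\alpha \in \Phi$, the matching is a direct definition-chase: the previous proposition defines the action of the algebraic root group $U_{\alpha,\bC}$ via $\exp(rX_\alpha) \mapsto \sum_{n\geq 0} (r\alpha_0)^n/n!$, and if we choose the pinning of $G_L$ so that the trivializing section $X_\alpha \in \fg_\alpha$ corresponds to $\iota(e_\alpha)$ under the identification $\fg_\alpha \cong \pi^L_{\alpha,\bC}\cap (V_L)_{\bC,1}$, this is literally the one-parameter subgroup of $N$ generated by $\alpha_0$. The only thing requiring care in the whole argument is bookkeeping: the identification $\Lie(D(L)_\bC) \cong L \otimes \bC$ and the pinning choice above. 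Beyond that I do not anticipate a serious obstacle, since both sides are described on the nose by the same operators on the explicit module $(V_L)_\bC$.
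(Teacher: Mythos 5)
Your proposal is correct, but it follows a genuinely different route from the paper's proof. The paper never integrates anything: it observes that $N$, $D(L)_\bC$, and $(U_\alpha)_\bC$ all act faithfully on $V_{L,\bC}$, that the Lie algebras of $N$ and $(G_L)_\bC$ are both identified with $(V_{L,\bC})_1$, and then (using that the subgroups in question are connected) reduces everything to comparing first-order infinitesimal actions on dual numbers: the element $1+\epsilon\nu \in D(L)(\bC[\epsilon]/(\epsilon^2))$ sends $v \in \pi^L_{\lambda,\bC}$ to $v + \epsilon\,\nu(\lambda)v$, while $h \in L\otimes\bC$ acts through $N$ by $1+\epsilon h_0$, i.e.\ $v \mapsto v + \epsilon\langle h,\lambda\rangle v$, and these match under the inner-product identification $\Hom(L,\bC) \cong L\otimes\bC$; the root-group case is the same one-line comparison $h \mapsto 1+\epsilon h_0$ on both sides. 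You instead work at the group level: you compute that $\exp(h(0))$ acts on $\pi^L_{\lambda,\bC}$ by the scalar $e^{(h,\lambda)}$ and match this with Proposition \ref{prop:torus-action}, and you then need the extra lifting step --- surjectivity of $\Hom(L,\bC) \to \Hom(L,\bC^\times)$, which holds because $L$ is free (equivalently $\Ext^1(L,2\pi i\bZ)=0$, essentially your Ext observation) --- to see that the Heisenberg exponentials exhaust the whole torus; your root-group case, a definition chase once the pinning $X_\alpha \leftrightarrow \iota(e_\alpha)$ is fixed, is effectively identical to the paper's. What your version buys is explicitness: it makes the ``corresponding subgroups'' of $N$ concrete (Heisenberg exponentials and the one-parameter subgroups $\exp(r\alpha_0)$) and shows directly that the torus image lands in and fills out the Cartan part of $N$. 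What the paper's version buys is brevity and uniformity: faithfulness reduces the entire statement to a comparison on $\bC[\epsilon]$-points, the torus and root-group cases are handled by the identical infinitesimal formula, and no surjectivity of $\exp$ or choice of logarithms of characters is ever invoked. Both arguments establish exactly the compatibility hypotheses consumed by Theorem \ref{thm:extend-action-to-G}, so your route is a valid substitute.
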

\begin{proof}
We note that the Lie algebras $N$ and $(G_L)_\bC$ are both identified with $(V_{L,\bC})_1$.  Furthermore, the groups $N$, $D(L)_\bC$, and $(U_\alpha)_\bC$ all act faithfully on $V_{L,\bC}$, so it suffices to show that the two infinitesimal actions of the torus and root groups coincide.

To show the torus actions coincide, we note that the first-order infinitesimal action of $L \otimes \bC \subset (V_{L,\bC})_1$ is by $h \mapsto 1 + \epsilon h_0$, which takes any $v \in \pi^L_{\lambda,\bC}$ to $v + \epsilon \langle h, \lambda \rangle v$.  The first-order infinitesimal action of the torus $D(L)_\bC$ is the map that takes, for $\nu \in \Hom(L,\bC)$, the element $1 + \epsilon \nu \in \Hom(L,\bC[\epsilon]/(\epsilon^2)^\times) = D(L)(\bC[\epsilon]/(\epsilon^2))$ to the automorphism that takes any $v \in \pi^L_{\lambda,\bC}$ to $(1 + \epsilon \nu)(\lambda) v = v + \epsilon \nu(\lambda) v$.  These match when we identify $\Hom(L,\bC)$ with $L \otimes \bC$ via the inner product.

For the root groups, the first-order infinitesimal action of $(U_\alpha)_\bC \subset (G_L)_\bC$ is given by $h \mapsto 1 + \epsilon h_0$, and the same is true for the corresponding subgroup of $N$.
\end{proof}

\begin{thm}
There is a unique action of $G_L$ on $V_L$ that restricts to the given actions of $T$ and $U_a$, such that base change to $\bC$ yields the action of $(G_L)_\bC$ on $(V_L)_\bC$ given by \cite{DN98} (where $(G_L)_\bC$ is given the notation $N$).
\end{thm}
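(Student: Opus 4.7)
The strategy is to apply Theorem \ref{thm:extend-action-to-G} weight subspace by weight subspace. First, pin $G_L$ by choosing a positive system $\Phi^+$ in $\Phi = \{a \in L : \langle a, a\rangle = 2\}$, a base $\Delta \subset \Phi^+$, and for each simple root $\alpha \in \Delta$ the trivializing section $X_\alpha$ of the corresponding one-dimensional root subspace of $\Lie(G_L) \cong (V_L)_1$; these are precisely the vectors used to define the $U_\alpha$-actions in the preceding proposition.

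For each $n \in \bZ$, the conformal weight piece $(V_L)_n$ is a finite rank free $\bZ$-module, and both the $T$-action and the $U_\alpha$-actions preserve conformal weight, so they restrict to $(V_L)_n$. The preceding proposition shows that the complexifications of these restricted actions coincide with the restrictions of the Dong-Nagatomo $N$-action on $(V_{L,\bC})_n$. Theorem \ref{thm:extend-action-to-G}, applied to the pinned split reductive group $G_L$ over $\bZ \subset \bC$ and the free $\bZ$-module $(V_L)_n$, then produces a unique $G_L$-action on $(V_L)_n$ extending the prescribed $T$- and $U_\alpha$-actions and base-changing over $\bC$ to the restriction of the $N$-action. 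Assembling over $n$ yields a $G_L$-action on $V_L$, with uniqueness inherited in each weight.

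What remains is to verify that the assembled action is by vertex algebra automorphisms rather than merely graded linear automorphisms. For each $i, j, k \in \bZ$, the condition that the Borcherds product $(V_L)_i \otimes_\bZ (V_L)_j \to (V_L)_{i+j-k-1}$, $u \otimes v \mapsto u_k v$, be $G_L$-equivariant is the vanishing of a certain morphism $\phi_{i,j,k}: G_L \to \Hom_\bZ((V_L)_i \otimes_\bZ (V_L)_j, (V_L)_{i+j-k-1})$ of $\bZ$-schemes. Base changed to $\bC$, $\phi_{i,j,k}$ vanishes because the $N$-action on $V_{L,\bC}$ is by vertex algebra automorphisms. Since $G_L$ is smooth over $\bZ$, its coordinate ring $\mathcal{O}(G_L)$ is $\bZ$-torsion-free, so the natural map $\mathcal{O}(G_L) \to \mathcal{O}(G_L) \otimes_\bZ \bC$ is injective and forces $\phi_{i,j,k} = 0$ on all of $G_L$. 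I expect this last step to be the main subtlety: density of complex points alone does not suffice because $G_L$ has positive-characteristic fibers, but the $\bZ$-flatness of $G_L$ supplies the rigidity needed to transfer the complex identity to an integral one.
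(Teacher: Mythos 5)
Your proposal is correct and takes essentially the same route as the paper: pin $G_L$, apply Theorem \ref{thm:extend-action-to-G} to each finite rank free homogeneous piece to get unique compatible $G_L$-actions, and then transfer the equivariance of each product map $u \otimes v \mapsto u_k v$ from $\bC$ down to $\bZ$. Your final torsion-freeness step is just an explicit version of the paper's appeal to the tensor-category formulation of Theorem \ref{thm:extend-action-to-G}, whose own proof rests on the same rigidity; note also that your caveat about density is misplaced, since flatness of $G_L$ over $\bZ$ (with $G_L$ reduced and the target separated) makes the generic fiber, hence the set of complex points, dense, so the ``density of complex points'' argument and your injectivity of $\mathcal{O}(G_L) \to \mathcal{O}(G_L) \otimes_\bZ \bC$ are the same argument in two guises.
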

\begin{proof}
From Theorem \ref{thm:extend-action-to-G}, there is an equivalence of tensor categories between:
\begin{enumerate}
\item finite rank free $\bZ$-modules $V$ with $G_L$-action, and
\item finite rank free modules $V$ with actions of $D(L)$ and $U_\alpha$ together with an action of $(G_L)_\bC$ on $V_\bC$ such that, after base change to $\bC$, the actions of the subgroups coincide with the restrictions of the $(G_L)_\bC$ action.
\end{enumerate}
For finite sums of homogeneous subspaces of $V_L$, these data are given by the previous three propositions.  Each homogeneous component of $V_L$ therefore admits a unique action of $G_L$ compatible with the actions of $T$, $U_a$, and $(G_L)_\bC$. Furthermore, each coefficient of each homogeneous vertex operator, as a map from a tensor product of homogeneous spaces to another homogeneous space, is equivariant with respect to this action.  We conclude that the actions of $G_L$ on the homogeneous subspaces are restrictions of a uniquely defined action on $V_L$ by homogeneous vertex algebra automorphisms.
\end{proof}

\section{Symmetries of the Mewtwo cover}

In section \ref{sec:lattice}, we defined a double cover $\hat{L}$ of an even lattice $L$ as an abstract group, and used it to construct the Fock space $V_{L,\bQ}$.  There is an embedding from the orthogonal symmetry group of $\hat{L}$ to the symmetry group of $V_{L,\bQ}$ and also to that of $V_L$, but it acquires a kernel if we base change to characteristic 2.  In order to have a faithful action when 2 is not invertible, we will replace the abstract double cover $\hat{L}$ with a finite flat group scheme $\tilde{L}$, presented as a $\mu_2$-central extension of the constant group scheme $L$.  For any ring $R$ where $\mu_2(R) = \{1, -1\}$ with $1 \neq -1$, there is a canonical identification $\tilde{L}(R) \cong \hat{L}$.  This condition holds for any integral domain of characteristic not 2, so our replacement is not a particularly drastic change.

The symmetry group of this cover is then an extension $O(\tilde{L})$ of $O(L)$ by $\Hom(L, \mu_2) \cong \mu_2^{\rank L}$.  We show that $O(\tilde{L})$ acts faithfully on $V_L$.

\subsection{The cover of a lattice}

We once again fix a base ring $R$.  Recall that $\mu_{2,R}$ is the group scheme that assigns to each commutative $R$-algebra $S$ the set $\mu_{2,R}(S) = \{x \in S | x^2 = 1\}$, with multiplication induced by multiplication in $S$.  It is represented by the finite flat scheme $\Spec R[x]/(x^2-1)$ with comultiplication $\Delta(x) = x \otimes x$.  This scheme is non-reduced over 2, but is smooth away from 2.  If $2 \neq 0$ in $R$, then the underlying affine scheme has a cyclic automorphism group of order 2 induced by $x \mapsto -x$, and automorphism group scheme $\mu_2$, while the group scheme itself has trivial automorphism group.  

\begin{defn}
Let $A$ be an abelian group, and $q: A \to \bZ$ a quadratic form with corresponding even bilinear form $B_q(x,y) = q(x + y) - q(x) - q(y)$.  A \textbf{$\mu_2$-cover} of $(A,q)$ over $R$ is a group scheme $\tilde{A}$ over $R$ equipped with a surjective homomorphism $\pi: \tilde{A} \to \uA_R$ to the constant group scheme $\uA_R = \coprod_{a \in A} \Spec R$, satisfying the following properties:
\begin{enumerate}
\item As a scheme, $\tilde{A} \cong \coprod_{a \in A} \mu_2$, and $\pi$ sends each copy of $\mu_2$ along its structure morphism to the corresponding copy of $\Spec R$.
\item For any commutative $R$-algebra $S$, the kernel of $\pi_S: \tilde{A}(S) \to \uA(S)$ is isomorphic to $\mu_2(S) = \{ x \in S | x^2 = 1\}$, and this subgroup lies in the center of $\tilde{A}(S)$.
\item For each commutative $R$-algebra $S$, and each $a,b \in \tilde{A}(S)$, we have $ab = (-1)^{B_q(\pi(a),\pi(b))}ba$, where $B_q: \uA(S) \times \uA(S) \to \underline{\bZ}(S)$ is the bilinear form extended to a map of sheaves.
\end{enumerate}
Given $\mu_2$-covers $\pi: \tilde{A} \to \uA$ and $\pi': \tilde{A}' \to \uA$, an isomorphism of $\mu_2$-covers is a group scheme isomorphism $\phi: \tilde{A} \to \tilde{A}'$ that satisfies $\pi = \pi' \circ \phi$.
\end{defn}

\begin{lem} \label{lem:existence-and-uniqueness-of-covers}
Let $A$ be a free abelian group of finite rank, and $q: A \to \bZ$ a quadratic form.  Then there exists a $\mu_2$-cover $\pi: \tilde{A} \to A$ of $(A,q)$.  Furthermore, this $\mu_2$-cover is unique up to isomorphism of $\mu_2$-covers.  The center of $\tilde{A}$ contains $\pi^{-1}(A^\perp)$ and $\pi^{-1}(2A)$.
\end{lem}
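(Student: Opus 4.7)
My plan is to encode $\mu_2$-covers of $(A,q)$ by $2$-cocycles $\epsilon:A\times A\to\mu_2$, construct one explicitly using a $\bZ$-basis of $A$, and then use the vanishing of $\Ext^1_\bZ(A,\mu_2)$ to deduce uniqueness up to isomorphism of covers. For existence, I choose an ordered basis $e_1,\dots,e_n$ of $A$ and define a $\bZ$-bilinear map $c:A\times A\to\bZ/2\bZ$ by $c(e_i,e_j):=B_q(e_i,e_j)\bmod 2$ for $i>j$ and $c(e_i,e_j):=0$ otherwise, extended bilinearly. Set $\epsilon(a,b):=(-1)^{c(a,b)}$, viewed as a section of $\mu_{2,R}$. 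Bilinearity of $c$ gives the $2$-cocycle identity $\epsilon(a,b)\epsilon(a+b,c)=\epsilon(a,b+c)\epsilon(b,c)$ for free, while a short computation using $B_q(e_i,e_i)=2q(e_i)$ yields $c(a,b)-c(b,a)\equiv B_q(a,b)\pmod 2$. I then take $\tilde A:=\coprod_{a\in A}\mu_{2,R}$ with $\pi$ the obvious morphism, and define multiplication on $S$-points by $(a,x)(b,y):=(a+b,\,xy\,\epsilon(a,b))$. The cocycle identity gives associativity, $(0,1)$ is a two-sided identity since $\epsilon(0,-)=\epsilon(-,0)=1$, and the commutator condition reduces to $\epsilon(a,b)/\epsilon(b,a)=(-1)^{B_q(a,b)}$, verifying the three axioms.

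For uniqueness, two $\mu_2$-covers become, after scheme-theoretic trivialization via condition (1), described by cocycles $\epsilon,\epsilon':A\times A\to\mu_2$ that both induce the commutator $(-1)^{B_q}$. Hence $\delta:=\epsilon/\epsilon'$ is a symmetric $2$-cocycle on $A$ with values in $\mu_2$. Classes of symmetric $2$-cocycles modulo coboundaries classify abelian central extensions of $A$ by $\mu_2$, and these are parametrized by $\Ext^1_\bZ(A,\mu_2)$, which vanishes because $A$ is free abelian. So there exists a normalized $1$-cochain $f:A\to\mu_2$ satisfying $\delta(a,b)=f(a)f(b)f(a+b)^{-1}$, and since the underlying scheme $\uA_R$ is discrete, $f$ assembles into a morphism $\uA_R\to\mu_{2,R}$; the rule $(a,x)\mapsto(a,f(a)x)$ is then an isomorphism of group schemes $\tilde A'\to\tilde A$ compatible with the two structure maps to $\uA_R$.

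For the statement about centers, condition (3) gives $\alpha\beta=(-1)^{B_q(\pi(\alpha),\pi(\beta))}\beta\alpha$ on $S$-points. If $\pi(\alpha)\in A^\perp:=\{a\in A\mid B_q(a,A)\subseteq 2\bZ\}$, the sign is always $+1$, so $\pi^{-1}(A^\perp)$ is central in $\tilde A$. Because $B_q(2x,y)=2B_q(x,y)\in 2\bZ$ for any $x,y\in A$, we have $2A\subseteq A^\perp$, whence $\pi^{-1}(2A)\subseteq\pi^{-1}(A^\perp)$ also lies in the center. The main obstacle I anticipate is the uniqueness step: the existence of the trivializing $1$-cochain $f$ rests on the cohomological input $\Ext^1_\bZ(A,\mu_2)=0$, and one must carefully pass from this set-theoretic coboundary relation to an honest morphism of $R$-group schemes. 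This last passage is harmless precisely because $\uA_R$ is the constant scheme on the abstract group $A$, so any set function out of $A$ automatically yields an $R$-morphism of schemes.
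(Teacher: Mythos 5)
Your proof is correct, and its uniqueness step takes a genuinely different route from the paper's. The existence half is essentially the paper's argument: both choose an ordered basis and take the bimultiplicative cocycle $\epsilon(e_i,e_j)=(-1)^{B_q(e_i,e_j)}$ for $i>j$ and $1$ otherwise, with bimultiplicativity giving the cocycle identity and evenness of $B_q(e_i,e_i)=2q(e_i)$ giving the commutator condition. For uniqueness, however, the paper invokes Proposition 5.2.3 of Frenkel--Lepowsky--Meurman for uniqueness of the \emph{abstract} double cover with prescribed commutator, and then promotes the resulting point-level isomorphism to a scheme isomorphism by arguing that morphisms between disjoint unions of copies of $\Spec R[x]/(x^2-1)$ are detected on points; you instead stay entirely on the scheme side, encoding an arbitrary cover by a $2$-cocycle $\epsilon\colon A\times A\to\mu_2(R)$, observing that the ratio of two such cocycles with the same commutator is a \emph{symmetric} cocycle, and killing it via $\Ext^1_\bZ(A,\mu_2(R))=0$ (valid for any coefficient group since $A$ is free), so that the trivializing cochain $f\colon A\to\mu_2(R)$ directly yields translations by $R$-points of $\mu_2$ on each component, hence an honest morphism of group schemes with no descent or point-detection needed. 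This is arguably more robust than the paper's transfer step, which is delicate over rings where $\mu_2(R)$ is small. Two small points to make explicit: first, your ``scheme-theoretic trivialization via condition (1)'' deserves a line --- to see that the multiplication takes the form $(a,x)(b,y)=(a+b,\,xy\,\epsilon(a,b))$ with $\epsilon(a,b)\in\mu_2(R)$, one should note that each fiber $\pi^{-1}(\hat a)$ is a torsor under the central kernel $\mu_2$ and re-choose the identification of condition (1) via the orbit map through the unit $R$-point $1_a$, after which centrality of the kernel forces the stated bilinear form of the product (also note $f=f^{-1}$ in $\mu_2$, so your sign convention for the coboundary is harmless). Second, your reading $A^\perp=\{a\mid B_q(a,A)\subseteq 2\bZ\}$ is the mod-$2$ orthogonal complement; the paper's $A^\perp$ is most naturally the radical $\{a\mid B_q(a,A)=0\}$, but since the radical is contained in your set, your argument proves the stated centrality claim either way (indeed a slightly stronger one, which makes the separate listing of $2A$ redundant), and your direct verification on $S$-points via axiom (3) is cleaner than the paper's check on $\bZ$-points followed by the same point-detection upgrade.
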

\begin{proof}
We first define $\tilde{A} = \uA \times \mu_2$ as a scheme.  We then prove existence of a suitable group operation by choosing a cocycle over $\bZ$ and base-changing it to all commutative rings $R$.  Following Proposition 5.2.3 of \cite{FLM88}, we let $c$ be the alternating bimultiplicative map from $A \times A \to \mu_2(\bZ) = \{ \pm 1\}$ given by $c(a,b) = (-1)^{\langle a,b \rangle}$, and define a cocycle $\epsilon: A \times A \to \mu_2(\bZ)$ by choosing an ordered basis $\alpha_1,\ldots,\alpha_n$, and taking the unique bimultiplicative map such that $\epsilon(\alpha_i,\alpha_j) = \begin{cases} c(\alpha_i,\alpha_j) & i > j \\ 1 & i \leq j \end{cases}$.  For any commutative $R$-algebra $S$, we then define a cocycle $\tilde{\epsilon}: \uA(S) \times \uA(S) \to \mu_2(S)$ by noting that $\uA(S)$ is the group of locally constant functions $\Spec S \to A$, so on the open sets $U$ where a pair of points is constant, we define the restriction of $\tilde{\epsilon}$ to $U$ as composition of a pair of such functions with $\epsilon$, followed by the canonical map $\mu_2(\bZ) \to \mu_2(S)$.  This cocycle then defines a functorial central extension.

For uniqueness, we introduce some notation to describe the structure of $\mu_2$-covers.  Let $\pi: \tilde{A} \to \uA$ and $\pi': \tilde{A}' \to \uA$ be $\mu_2$-covers of $(A,q)$.  We wish to construct an isomorphism of $\mu_2$-covers between them.  By Proposition 5.2.3 in \cite{FLM88}, there is a unique double cover $f: \hat{A} \to A$ with commutator map $ab = (-1)^{B_q(f(a),f(b))}ba$, up to isomorphism.  Thus, there exists an isomorphism $\tilde{A}(S) \to \tilde{A}'(S)$ that induces equality on $\uA(S)$ and the kernels $\mu_2(S)$.  It remains to show that this group isomorphism can be promoted to an isomorphism of group schemes.  

We view $\tilde{A}$ and $\tilde{A}'$ as disjoint unions of copies of $X = \Spec R[x]/(x^2-1)$, indexed by elements of $A$.  For each $\bar{a},\bar{b} \in A$, the multiplication operations restricted to the $\bar{a},\bar{b}$ components are maps from copies of $X \times X$ to $X$.  Maps between copies of $X$ and its finite products are uniquely determined by the corresponding maps on $R$-points, so any condition that can be expressed as commutativity of a diagram of copies of $X$ can be checked on $R$-points.  In particular, the existence of a group scheme isomorphism between group schemes that are disjoint unions of copies of $X$ is a condition that satisfies this property.  Thus, the existence of a $\mu_2$-cover isomorphism $\tilde{A} \to \tilde{A}'$ reduces to the corresponding isomorphism on $R$-points, which we have established.

For the claim about the center, we note that fixing any input to lie in $A^\perp$ or $2A$ yields trivial commutator maps, so the $\bZ$-points of $\pi^{-1}(A^\perp)$ and $\pi^{-1}(2A)$ are central.  By the same reasoning as the previous paragraph, the corresponding scheme-theoretic commutator maps are trivial.
\end{proof}

\subsection{Orthogonal transformations}

\begin{defn}
Let $(\tilde{A}, \pi)$ be the $\mu_2$-cover of a free abelian group $A$ with quadratic form $q: A \to \bZ$.  An \textbf{orthogonal transformation} of $(\tilde{A}, \pi)$ over $S$ is a group scheme automorphism $\phi$ of $\tilde{A}_S$ such that there exists an automorphism $\bar{\phi}$ of $A$ that preserves $q$, satisfying $\pi \circ \phi = \bar{\phi} \circ \pi$.  The functor that takes an $R$-algebra $S$ to the group of orthogonal transformations of $(\tilde{A},\pi)$ over $S$ is written $O(\tilde{A})$, or $O(\tilde{A},\pi)$.
\end{defn}

\begin{prop} \label{prop:structure-of-orthogonal-transformation-group}
Let $(A,q)$ be a free abelian group of finite rank $r$, with quadratic form $q: A \to \bZ$.  Let $O(A,q)$ be the constant group of orthogonal transformations of $(A,q)$.  Then, the sequence:
\[ 1 \to \uHom(\uA, \mu_2) \to O(\tilde{A},\pi) \to O(A,q) \to 1 \]
of group sheaves is exact.  That is, the automorphism group sheaf $O(\tilde{A},\pi)$ of the $\mu_2$-cover $(\tilde{A},\pi)$ of $\uA$ has a normal subgroup isomorphic to $\mu_2^r$ with quotient isomorphic to $O(A,q)$.  Here, the map $\uHom(\uA,\mu_2) \to O(\tilde{A},\pi)$ takes any $S$-homomorphism $\lambda: \uA_S \to \mu_{2,S}$ to $\lambda^*: \tilde{A}(S) \to \tilde{A}(S)$ defined by $a \mapsto (-1)^{\lambda(\bar{a})}a$.  In particular, $O(\tilde{A},\pi)$ is representable by a flat group scheme locally of finite presentation, and away from the prime 2 it is \'etale.  Furthermore, if $(A,q)$ is positive definite, then $O(\tilde{A},\pi)$ is finite, flat, and affine.
\end{prop}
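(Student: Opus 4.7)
My plan is to build the projection $O(\tilde{L},\pi) \to O(A,q)$ of group sheaves, identify its kernel with $\uHom(\uA,\mu_2)$, verify global surjectivity via the uniqueness of $\mu_2$-covers, and then read off representability together with the finiteness and étaleness claims from the resulting trivialization. The projection itself is built into the definition: each $\phi \in O(\tilde{A},\pi)(S)$ comes with a unique $\bar\phi \in O(A,q)(S)$ making the square with $\pi$ commute, and $\phi \mapsto \bar\phi$ is visibly a homomorphism of group sheaves. For the kernel, an automorphism $\phi$ that covers the identity must restrict on each connected component $\mu_{2,a} \subset \tilde{A}$ to a scheme automorphism commuting with left and right multiplication by $\mu_2 = \mu_{2,0}$, and so it is necessarily multiplication by some section $\lambda(\bar a) \in \mu_2(S)$. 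Using the centrality of $\mu_2$ in $\tilde{A}$, the compatibility $\phi(ab) = \phi(a)\phi(b)$ reduces to $\lambda(\bar a + \bar b) = \lambda(\bar a)\lambda(\bar b)$, so $\lambda$ is forced to be a group homomorphism $\uA \to \mu_2$, identifying the kernel with $\uHom(\uA,\mu_2)$ via precisely the formula $\lambda \mapsto \lambda^*$ in the statement.

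For surjectivity, I will lift each $\bar\phi \in O(A,q)$ to a global automorphism of $\tilde{A}$. The pullback scheme $\bar\phi^*\tilde{A} \to \uA$ inherits a group structure from $\tilde{A}$, and because $\bar\phi$ preserves $q$ the commutator identity $ab = (-1)^{B_q(\pi(a),\pi(b))}ba$ persists after relabeling, so $\bar\phi^*\tilde{A}$ is again a $\mu_2$-cover of $(A,q)$ over $R$. The uniqueness part of Lemma \ref{lem:existence-and-uniqueness-of-covers} then produces a $\mu_2$-cover isomorphism $\tilde{A} \simto \bar\phi^*\tilde{A}$, whose composition with the canonical projection $\bar\phi^*\tilde{A} \to \tilde{A}$ is an automorphism of $\tilde{A}$ lying over $\bar\phi$. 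This lifts every element of the constant group $O(A,q)$; sheaf surjectivity over a general base $S$ then follows by Zariski-localizing on $\Spec S$ to reduce a locally constant section $\bar\phi \in O(A,q)(S)$ to the constant case.

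Finally, choosing one such lift $\phi_{\bar\phi}$ for each $\bar\phi \in O(A,q)$ trivializes the extension on the level of schemes and presents $O(\tilde{A},\pi)$ as the disjoint union $\coprod_{\bar\phi \in O(A,q)} \uHom(\uA,\mu_2) \cong \coprod_{\bar\phi} \mu_2^r$, with group law twisted by a cocycle valued in $\uHom(\uA,\mu_2)$. Representability by a flat group scheme locally of finite presentation is then immediate; $O(\tilde{A},\pi)$ is étale over $\bZ[1/2]$ because $\mu_2$ is; and when $(A,q)$ is positive definite, the constant group $O(A,q)$ is finite, so the disjoint union is finite, flat, and affine. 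The main obstacle I anticipate is the surjectivity step, specifically confirming that $\bar\phi^*\tilde{A}$ genuinely satisfies the three scheme-theoretic axioms of a $\mu_2$-cover of $(A,q)$ so that the uniqueness clause of Lemma \ref{lem:existence-and-uniqueness-of-covers} applies to produce a \emph{global} lift rather than merely an fpqc-local one; once this is settled, everything else is straightforward bookkeeping.
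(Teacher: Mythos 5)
Your proposal is correct and follows essentially the same route as the paper's proof: the projection is read off the definition, the kernel is identified with $\uHom(\uA,\mu_2)$ by component-wise multiplication by $\mu_2$-sections, surjectivity is obtained from the uniqueness clause of Lemma \ref{lem:existence-and-uniqueness-of-covers} applied to a relabeled cover (your pullback $\bar\phi^*\tilde{A}$ is the paper's cover $(\tilde{A}, h\circ\pi)$ in different notation), and representability follows from the resulting set-theoretic splitting into $\coprod_{\bar\phi}\mu_2^r$. The obstacle you flag is indeed routine, exactly as you anticipate.
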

\begin{proof}
For any orthogonal transformation $\phi \in O(\tilde{A},\pi)(S)$, the corresponding automorphism $\bar{\phi} \in O(A,q)(S)$ is uniquely determined by the condition $\pi \circ \phi = \bar{\phi} \circ \pi$.  The commutativity of the corresponding diagram for composites implies we have a canonical homomorphism $O(\tilde{A},\pi) \to O(A,q)$.  We will first show that it is surjective, then determine the kernel.

For any $\bar{\phi} \in O(A,q)(S)$, we may partition $\Spec S$ into finitely many open and closed subschemes $U$ such that restriction to $U$ is constant.  To produce an element $\phi \in O(\tilde{A},\pi)(S)$ mapping to $\bar{\phi}$, it suffices to produce elements $O(\tilde{A},\pi)(U)$ mapping to the corresponding restrictions.  Thus, it suffices to show that for any $h \in O(A,q)$, there is an orthogonal transformation of $\tilde{A}$ that maps to it.  The homomorphism $h \circ \pi: \tilde{A} \to A$ is a $\mu_2$-cover of $A$, so Lemma \ref{lem:existence-and-uniqueness-of-covers} implies there exists an automorphism $\tilde{h}$ of $\tilde{A}$ such that the diagram
\[ \xymatrix{ 1 \ar[r] & \mu_2 \ar@{=}[d] \ar[r] & \tilde{A} \ar[r]^{h \circ \pi} \ar[d]^{\tilde{h}} & A \ar@{=}[d] \ar[r] & 1 \\ 1 \ar[r] & \mu_2 \ar[r] & \tilde{A} \ar[r]_{\pi} & A \ar[r] & 1 } \]
commutes.  Then, $\tilde{h} \in O(\tilde{A},\pi)$, and maps to $h$ under our homomorphism.  This proves surjectivity.

We now consider the kernel.  We first note that for any $\lambda: \uA_S \to \mu_{2,S}$, the map $\lambda^*: \tilde{A}(S) \to \tilde{A}(S)$ is nontrivial if $\lambda$ is nontrivial, and induces the trivial map $\Spec S \to O(A,q)$, so $\uHom(\uA,\mu_2)$ lies in the kernel of the homomorphism.  If $g \in O(\tilde{A},\pi)(S)$ induces the trivial map $\Spec S \to O(A,q)$, then $\pi \circ g = \pi$ as $S$-group scheme homomorphisms $\tilde{A}_S \to \uA_S$.  For any $x \in A$, we have a subscheme $\hat{x} \in \uA_S$ corresponding to $x$ that is isomorphic to $\Spec S$, and $\pi^{-1}(\hat{x})$ is a $\mu_{2,S}$-torsor.  The restriction of $g$ to $\pi^{-1}(\hat{x})$ is then multiplication by some element of $\mu_{2,S}(S)$.  The group law on $\tilde{A}_S$ implies $g$ is uniquely determined by the restriction of $g$ to $\pi^{-1}(\{\hat{e}_1,\ldots,\hat{e}_r\})$, where $e_1,\ldots,e_r$ is a basis of $A$, and any such choice of $r$ elements of $\mu_{2,S}(S)$ corresponds to a unique element of $\uHom(\uA,\mu_2)(S)$.

We conclude that the sheaf of sets underlying $O(\tilde{A},\pi)$ is isomorphic to the direct product of $\mu_2^r$ and the constant group scheme $O(A,q)$, hence representable by a scheme.  If $q$ is positive definite, then $O(A,q)$ is a finite constant group, so the claimed properties of $O(\tilde{A},\pi)$ hold.
\end{proof}

\begin{prop}
 If $L$ is positive-definite, then $O(\tilde{L})$ is an affine group scheme of finite type.
\end{prop}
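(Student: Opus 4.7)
The plan is to derive this essentially as a corollary of the structural result in Proposition \ref{prop:structure-of-orthogonal-transformation-group}, applied to the pair $(L,q)$ where $q(a) = \frac{1}{2}(a,a)$ is the integer-valued quadratic form associated to the even lattice $L$. That proposition already asserts finiteness, flatness, and affineness of $O(\tilde{A},\pi)$ under a positive-definite hypothesis, so the present proposition is, at worst, a matter of reading off exactly those conclusions and verifying that they indeed yield ``affine of finite type''.

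The first step is to observe that positive-definiteness of $L$ forces the constant group scheme $O(L,q)$ to be finite: the orthogonal group of a positive definite lattice is finite because its elements permute the finite set of vectors of bounded norm and are determined by their action on a basis. Hence the base of the exact sequence
\[ 1 \to \uHom(\uL,\mu_2) \to O(\tilde{L},\pi) \to O(L,q) \to 1 \]
is a finite constant group scheme, which is affine and of finite type (indeed finite) over $\bZ$.

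The second step is to handle the kernel. We have $\uHom(\uL,\mu_2) \cong \mu_2^{\rank L}$, which is represented by the finite affine scheme $\Spec \bZ[x_1,\ldots,x_r]/(x_1^2-1,\ldots,x_r^2-1)$, hence affine and of finite type. Finally, the proof of Proposition \ref{prop:structure-of-orthogonal-transformation-group} already produced a scheme-theoretic splitting of the underlying sheaf of sets $O(\tilde{L},\pi) \cong \mu_2^r \times O(L,q)$, from which it follows that $O(\tilde{L})$ is representable by a scheme that is a finite disjoint union of copies of $\mu_2^r$, one for each element of the finite group $O(L,q)$. A finite disjoint union of affine schemes of finite type is affine of finite type.

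There is no real obstacle here; the only thing to be careful about is not to confuse ``affine of finite type'' with ``finite'' — the two happen to coincide in this case since the fibers are finite and the base is Noetherian, but the statement we need is just the weaker one, and it follows immediately from the explicit scheme-theoretic description already produced above.
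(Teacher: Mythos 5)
Your proof is correct, but it takes a different route from the paper's. You deduce the result from the structural Proposition \ref{prop:structure-of-orthogonal-transformation-group}: its proof exhibits the underlying sheaf of sets of $O(\tilde{L},\pi)$ as $\mu_2^{\rank L}\times \underline{O(L)}$, so for positive-definite $L$ (where $O(L)$ is finite) the representing scheme is a finite disjoint union of copies of $\mu_2^{\rank L}$, hence $\Spec$ of a finite product of finite-type $\bZ$-algebras — affine of finite type, and in fact finite flat. Indeed, since that proposition's statement already asserts ``finite, flat, and affine'' in the positive-definite case, your observation that the present proposition is essentially a corollary is accurate. The paper instead argues representation-theoretically: letting $d$ be the maximal norm of a chosen basis of $L$ and $n$ the number of lattice vectors of norm at most $d$, it embeds $O(\tilde{L})$ into $GL_n$ via its faithful action on the free module spanned by these vectors, in the spirit of Lemma \ref{lem:represented-by-affine}. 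Your approach buys a stronger conclusion (finiteness and flatness, not just affine finite type) with no new work; the paper's approach buys an explicit faithful finite-rank linear representation, independent of the decomposition argument, which is the same mechanism used to handle $\Aut V_L$ itself. One small caution: your parenthetical claim that finite fibers plus a Noetherian base upgrade ``affine of finite type'' to ``finite'' is false in general — quasi-finite does not imply finite (e.g., $\Spec \bZ[1/p] \to \Spec \bZ$ is affine of finite type with fibers of cardinality at most one, but is not finite) — though this is harmless here, since finiteness is immediate from your explicit description of the scheme as a finite disjoint union of copies of $\mu_2^{\rank L}$.
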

\begin{proof}
Given a basis of $L$, let $d$ be the maximal norm of vectors in this basis, and let $n$ be the number of vectors in $L$ of length at most $d$.  Then $O(\tilde{L})$ is a subgroup of $GL_n$, since it acts faithfully on the linear space whose basis is the set of these vectors.
\end{proof}

\subsection{Action of orthogonal transformations on the lattice vertex algebra}

Recall the twisted group ring $\bZ\{L\} = \bZ[\hat{L}]/(\kappa + 1)$, introduced in Definition \ref{defn:twisted-group-ring}.  For any commutative ring $R$, we write $R\{L\}$ for $R \otimes \bZ\{L\}$, and note the identification with $R[\hat{L}]/(\kappa + 1)$.

\begin{lem}
Let $L$ be an even lattice and let $R$ be a commutative ring.  There is an embedding of the $\mu_2(R)$-cover $\tilde{L}(R)$ into the units of the twisted group ring $R\{L\}$, with image $\{ r \iota(e_a)| a \in L, r \in \mu_2(R)\}$.
\end{lem}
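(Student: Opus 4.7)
The plan is to write down the embedding explicitly using the common cocycle underlying both $\tilde{L}$ and $R\{L\}$, and then verify multiplicativity and injectivity.

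First, I would recall from Definition \ref{defn:twisted-group-ring} that once a set-theoretic section $e\colon L \to \hat{L}$ is chosen, $R\{L\}$ is free as an $R$-module on the set $\{\iota(e_a)\}_{a \in L}$, with multiplication rule $\iota(e_a)\iota(e_b) = \epsilon(a,b)\iota(e_{a+b})$, where $\epsilon\colon L \times L \to \{\pm 1\}$ is precisely the $\bZ$-valued cocycle constructed in the proof of Lemma \ref{lem:existence-and-uniqueness-of-covers} (which in turn comes from the FLM cocycle of Proposition 5.2.3 of \cite{FLM88}). In particular each $\iota(e_a)$ is a unit, with inverse $\epsilon(a,-a)\iota(e_{-a})$.

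Next, I would define $\phi\colon \tilde{L}(R) \to R\{L\}^\times$ as follows. An $R$-point of $\tilde{L} = \coprod_{a \in L} \mu_2$ corresponds, after decomposing $\Spec R$ into the open-closed pieces $\Spec R_i$ on which the underlying map to $L$ takes a constant value $a_i$, to a collection of sections $r_i \in \mu_2(R_i)$. Sending this datum to $\sum_i e_i\, r_i\, \iota(e_{a_i})$, where the $e_i$ are the orthogonal idempotents cutting out the $R_i$, yields an element of $R\{L\}^\times$ (with inverse $\sum_i e_i\, r_i\, \epsilon(a_i,-a_i)\iota(e_{-a_i})$). On a connected base this simply sends $(a,r)$ to $r\iota(e_a)$, matching the statement of the lemma.

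The remaining verifications are: (i) \emph{multiplicativity}, which reduces via the explicit formulas on both sides to the equality of the two cocycles --- the cocycle $\tilde{\epsilon}$ used to construct $\tilde{L}(R)$ in Lemma \ref{lem:existence-and-uniqueness-of-covers} and the cocycle $\epsilon$ defining multiplication in $R\{L\}$ are both obtained by base-changing the same $\bZ$-valued cocycle, hence agree; (ii) \emph{injectivity}, which is immediate since $\{\iota(e_a)\}$ is an $R$-basis, so on each idempotent piece $e_i\, r_i\, \iota(e_{a_i}) = e_i\, \iota(e_0)$ forces $a_i = 0$ and $r_i = 1$; (iii) \emph{image identification}, which is built into the definition of $\phi$. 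The only potential sticking point is (i), but this is tautological given that both structures are pulled back from the same $\bZ$-cocycle; there is no substantive difficulty.
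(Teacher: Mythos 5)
Your proof is correct, and it takes a genuinely more explicit route than the paper's. The paper argues abstractly: it chooses a basis $\{v_i\}$ of $L$, picks a $\mu_2(R)$-equivariant bijection between the preimage of each $v_i$ in $\tilde{L}(R)$ and the torsor $\{r\iota(e_{v_i}) \mid r \in \mu_2(R)\}$, and then obtains existence and uniqueness of an extension of these bijections to a group isomorphism from the facts that both groups are generated by these torsors, surject onto $L$, and satisfy the same commutation relations --- i.e., it leans on the uniqueness of central extensions with prescribed commutator (Proposition 5.2.3 of \cite{FLM88}, as repackaged in Lemma \ref{lem:existence-and-uniqueness-of-covers}) rather than on any explicit formula. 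You instead write the embedding down on the nose at the cocycle level, which makes injectivity and multiplicativity into direct computations and, as a bonus, correctly handles a disconnected $\Spec R$, where $\tilde{L}(R)$ consists of locally constant data --- a bookkeeping point the paper's proof silently elides. The trade-off is that the paper's argument never needs to compare cocycles at all, while yours must.

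One caveat on your step (i): the section $e\colon L \to \hat{L}$ in Definition \ref{defn:twisted-group-ring} is an \emph{arbitrary} set-theoretic section, so the sign cocycle governing $\iota(e_a)\iota(e_b) = \pm\,\iota(e_{a+b})$ in $R\{L\}$ is only cohomologous to the FLM cocycle $\epsilon$ used to build $\tilde{L}$, not literally equal to it; ``hence agree'' is an overstatement as written. This is harmless and easily repaired --- either fix $e$ to be the tautological section of the cocycle model of $\hat{L}$ (permitted by the uniqueness statement), or twist your map by the resulting coboundary, sending $(a,r) \mapsto r\,\delta(a)\,\iota(e_a)$ for the appropriate sign function $\delta\colon L \to \{\pm 1\}$ with $\delta(0)=1$, which has the same image --- but it should be said explicitly. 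Note also that on a disconnected base the image of your $\phi$ is strictly larger than the set $\{r\iota(e_a) \mid a \in L,\ r \in \mu_2(R)\}$ appearing in the statement (it contains idempotent-mixed sums); the lemma's image claim, like the paper's own proof, implicitly concerns the points with constant $L$-component, so your generality is fine but deserves a remark reconciling it with the stated image.
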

\begin{proof}
Choose a basis $\{v_i\}$ of $L$.  For each basis vector $v_i$, the preimage of $v_i$ in $\tilde{L}(R)$ is a $\mu_2(R)$-torsor, and so is $\{ r \iota(e_{v_i}) | r \in \mu_2(R)\}$.  For each basis vector, we choose a $\mu_2(R)$-equivariant bijection between the two sets.  We claim that this collection of bijections extends uniquely to an isomorphism from $\tilde{L}(R)$ to the subgroup $\{ r \iota(e_a)| a \in L, r \in \mu_2(R)\}$ of $R\{L\}^\times$.

Since the preimages of basis vectors $v_i$ generate $\tilde{L}$, and $\{ r \iota(e_{v_i}) | r \in \mu_2(R)\}$ generate the subgroup of $R\{L\}^\times$ under consideration, we have uniqueness once we have existence.  Existence follows from the fact that both groups admit a surjective homomorphisms to $L$, and satisfy the same commutation relations.
\end{proof}

\begin{lem} \label{lem:faithful-action-on-twisted-group-ring}
Let $L$ be a positive-definite even lattice and let $R$ be a commutative ring.  The canonical action of $O(\tilde{L})(R)$ on $\tilde{L}$ extends to a faithful action on the twisted group ring $R\{L\}$ by $R$-algebra automorphisms.
\end{lem}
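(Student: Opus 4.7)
The plan is to use the embedding $\tilde{L}(R) \hookrightarrow R\{L\}^\times$ from the previous lemma to transport the action of $O(\tilde{L})(R)$ from $\tilde{L}(R)$ to $R\{L\}$, extend $R$-linearly to the whole algebra, and then read off faithfulness from the fact that $\{\iota(e_a) : a \in L\}$ is an $R$-basis.

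First I would fix a set-theoretic section $a \mapsto e_a$ of the covering homomorphism $\pi$, whose images under the embedding are the basis elements $\iota(e_a)$. For $\phi \in O(\tilde{L})(R)$ with underlying orthogonal transformation $\bar\phi$, the condition $\pi\circ\phi = \bar\phi\circ\pi$ forces $\phi(e_a) = \epsilon_\phi(a)\,e_{\bar\phi(a)}$ for a unique sign $\epsilon_\phi(a) \in \mu_2(R)$, and I would declare the action on $R\{L\}$ by
\[ \iota(e_a) \;\longmapsto\; \epsilon_\phi(a)\,\iota(e_{\bar\phi(a)}), \]
extended by $R$-linearity.

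The main technical point is verifying that this extension is a ring homomorphism. Multiplication of basis vectors is given by a $\pm 1$-valued cocycle, $\iota(e_a)\iota(e_b) = \epsilon(a,b)\,\iota(e_{a+b})$, so the required compatibility reduces to $\phi$ being a group homomorphism on the image subgroup $\tilde{L}(R) \subset R\{L\}^\times$ together with the statement that $\phi$ fixes the central $\mu_2(R)$ pointwise, so that the scalar cocycle values are preserved. This last point is the subtle input: any group scheme automorphism of $\tilde{L}$ must restrict to an automorphism of the kernel $\mu_{2,R}$, and the automorphism group scheme of $\mu_2$ is trivial (since $(\bZ/2)^\times$ is trivial), so this restriction is forced to be the identity.

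For faithfulness, suppose the extended $\phi$ acts trivially on $R\{L\}$. Then $\epsilon_\phi(a)\,\iota(e_{\bar\phi(a)}) = \iota(e_a)$ for every $a \in L$, and linear independence of the basis forces $\bar\phi = \id$ and $\epsilon_\phi \equiv 1$. Proposition \ref{prop:structure-of-orthogonal-transformation-group}, which describes $O(\tilde{L})(R)$ as an extension of $O(L)$ by $\Hom(L, \mu_2(R))$ whose kernel acts on $\iota(e_a)$ through the scalar $\lambda(a)$, then identifies $\phi$ with the identity element of $O(\tilde{L})(R)$.
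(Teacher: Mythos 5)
Your proof is correct and takes essentially the same route as the paper's: transport the $O(\tilde{L})(R)$-action through the embedding of $\tilde{L}(R)$ into $R\{L\}^\times$ from the preceding lemma, extend $R$-linearly over the basis $\{\iota(e_a)\}$, and use the triviality of the automorphism group scheme of $\mu_2$ to see that the central scalars (hence the cocycle values) are preserved --- a point the paper uses implicitly when asserting the action commutes with scalar multiplication. Your explicit cocycle verification and your faithfulness argument via Proposition \ref{prop:structure-of-orthogonal-transformation-group} (forcing $\bar\phi=\id$ and $\epsilon_\phi\equiv 1$, hence $\phi=\id$) fill in details the paper's terser proof leaves unstated.
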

\begin{proof}
For any basis $\{v_i\}$ of $L$, the action of $g \in O(\tilde{L})(R)$ sends the generating set $\{r \iota(e_{v_i}) | r \in \mu_2(R)\}$ to the generating set $\{r \iota(e_{g \cdot v_i}) | r \in \mu_2(R)\}$, and extends to a group homomorphism on the subgroup of units generated by these sets, and since this commutes with scalar multiplication, we get an action by automorphisms on the multiplicative monoid of $R\{L\}$.  Finally, additive structure is preserved, since the action takes the rank one free modules $R\iota(e_a)$ to each other, applying only scalar multiplications.
\end{proof}

Recall that for any commutative ring $R$, $V_{L,R}$ is the $R$-span of all composites $s_{a^1,n_1}\cdots s_{a^k,n_k}\iota(e_a)$ for $a^1,\ldots,a^k,a \in L$, $n_1,\ldots,n_k \in \bZ_{\geq 0}$.

\begin{prop} \label{prop:OL-action-on-VOA}
Let $L$ be a positive-definite even lattice and let $R$ be a commutative ring. We define the map $O(\tilde{L})(R) \times V_{L,R} \to V_{L,R}$ by linearly extending the map $(g, s_{a^1,n_1}\cdots s_{a^k,n_k}\iota(e_a)) \mapsto s_{ga^1,n_1} \cdots s_{ga^k,n_k}g\iota(e_a)$ on basis vectors.  This describes a faithful group scheme action by vertex operator algebra automorphisms.
\end{prop}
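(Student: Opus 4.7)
The plan is to construct $\phi_g$ on $V_{L,R}$ by combining two already-established natural actions: the Heisenberg-module action induced by $O(L)(R)$-functoriality of $H_L$, and the action on $R\{L\}$ furnished by Lemma \ref{lem:faithful-action-on-twisted-group-ring}. Concretely, $V_{L,R} = \bigoplus_{a \in L} \pi^L_{a,R}$ and each $\pi^L_{a,R}$ is spanned over $R$ by composites $s_{a^1,n_1}\cdots s_{a^k,n_k}\iota(e_a)$; the underlying isometry $\bar{g} \in O(L)$ induces an automorphism of the Heisenberg algebra sending $b(-n)$ to $(\bar{g}b)(-n)$, hence $s_{b,n}$ to $s_{\bar{g}b,n}$, while $g$ itself acts on $\iota(e_a)$ through the $R\{L\}$-action. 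These combine on generators to exactly the formula in the statement, and since both component actions are constructed functorially, the resulting $\phi_g$ is automatically a well-defined $R$-linear endomorphism of $V_{L,R}$ that base-changes correctly in $R$.

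The second step is to verify that $\phi_g$ is a vertex algebra automorphism. Since $V_L$ is generated as a vertex algebra by the Heisenberg vectors $a(-1)\unit$ and the lattice vectors $\iota(e_a)$, the locality/associativity formulation of the Borcherds identity reduces the check to the identity $\phi_g(Y(u,z)v) = Y(\phi_g u, z)\phi_g v$ on this generating set. For Heisenberg generators the identity is immediate from the $O(L)$-equivariance of the Heisenberg construction. For $u = \iota(e_a)$, the vertex operator formula $Y(\iota(e_a),z) = E^-(-a,z)E^+(a,z)z^a \iota(e_a)$ from Theorem \ref{thm:lattice-VOA-Q} transforms correctly: the operators $E^\pm(\mp a,z)$ and $z^a$ become $E^\pm(\mp \bar{g}a,z)$ and $z^{\bar{g}a}$ under the Heisenberg part of the action, while the left-multiplication by $\iota(e_a)$ becomes left-multiplication by $g\iota(e_a)$ under the $R\{L\}$ part; putting these together yields $Y(\phi_g \iota(e_a),z)\phi_g v$ as required.

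The remaining steps are lighter. The homomorphism property $\phi_{g_1 g_2} = \phi_{g_1}\phi_{g_2}$ and preservation of the $\bZ$-grading (since $\bar{g}$ is an isometry, $L_0$-eigenvalues are preserved) reduce to the corresponding properties of the two component actions. All constructions commute with arbitrary base change $R \to R'$, so the assignment $g \mapsto \phi_g$ is functorial and defines a morphism of group schemes $O(\tilde{L}) \to \Aut_{V_L/\bZ}$. For faithfulness, if $\phi_g = \id$ on $V_{L,R}$ then in particular $\phi_g$ fixes every $\iota(e_a)$, so $g$ acts trivially on $R\{L\}$, which by Lemma \ref{lem:faithful-action-on-twisted-group-ring} forces $g$ to be the identity.

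The main obstacle I anticipate is the vertex-algebra compatibility check at the $\iota(e_a)$: the cocycle implicit in $\iota(e_a)\iota(e_b) = \epsilon(a,b)\iota(e_{a+b})$ and the commutator sign $(-1)^{\langle a,b\rangle}$ visible in the action of $z^a$ across $\pi^L_{b,R}$ must be transformed to matching signs under $g$. This compatibility holds on the nose only because the commutator relation in $\tilde{L}$ is preserved by every element of $O(\tilde{L})$ by definition, so that lifts of $\bar{g}a$ and $\bar{g}b$ obey the same cocycle identities as lifts of $a$ and $b$; this is precisely the content built into $\tilde{L}$ by Lemma \ref{lem:existence-and-uniqueness-of-covers}, together with the extension to $R\{L\}$ provided by Lemma \ref{lem:faithful-action-on-twisted-group-ring}.
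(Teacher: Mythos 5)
Your proposal is correct, but it takes a genuinely different route for the main verification than the paper does. The paper's proof is essentially a citation: faithfulness is deduced from Lemma \ref{lem:faithful-action-on-twisted-group-ring} (exactly as you do), and the compatibility with vertex operators is delegated to the multiplication formulas of \cite{FLM88}, Theorem 10.4.6, stated there for the Leech lattice over a $\bQ$-algebra but with a lattice-independent proof; the integral case then follows by base change from $\bQ$ to $\bZ$, and the general case by base change from $\bZ$ to $R$. You instead reconstitute that computation directly: you reduce to a generating set using the standard fact that the set of $u$ with $\phi_g Y(u,z)\phi_g^{-1} = Y(\phi_g u,z)$ is a vertex subalgebra containing $\unit$, and then conjugate the explicit operator $Y(\iota(e_a),z) = E^-(-a,z)E^+(a,z)z^a\iota(e_a)$ factor by factor, with the cocycle-sign matching absorbed into the $R$-algebra-automorphism property of the $R\{L\}$-action. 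This buys self-containedness, and it correctly isolates where the defining commutator condition on $O(\tilde{L})$ enters --- something the paper leaves implicit inside the FLM citation. The paper's route buys brevity and silently disposes of two points you must handle with care: (i) the composites $s_{a^1,n_1}\cdots s_{a^k,n_k}\iota(e_a)$ are only a spanning set, not a basis, so well-definedness of $\phi_g$ requires your tensor-decomposition remark to be made precise (define the map over $\bQ$ on $\bQ\{L\}\otimes \Sym$ as the tensor product of the two component actions, check it preserves the $\bZ$-form, then base change --- this is in effect what the paper's base-change argument accomplishes); and (ii) for a general $R$-point $g$, the underlying $\bar{g} \in O(L)(R)$ is only locally constant on $\Spec R$, so the assignment $s_{b,n}\mapsto s_{\bar{g}b,n}$ should be read Zariski-locally and glued. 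Neither is a gap in substance, and your generating set is legitimate (over $\bZ$ it can even be shrunk to the $\iota(e_{\pm\gamma_i})$ alone, by the Dong--Griess generation result the paper invokes in Proposition \ref{prop:aut-represented-by-affine-finite-type}); the one item you omit entirely is preservation of the conformal vector $\omega$ when $\det L$ is invertible, which the paper also does not check here but supplies later via the dual-basis expression for $\omega$ in Corollary \ref{cor:homogeneous-aut-VOA-aut}.
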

\begin{proof}
The fact that this is a faithful group action follows from \ref{lem:faithful-action-on-twisted-group-ring}.  The fact that the action is by vertex operator algebra automorphisms follows from the multiplication formulas for elements of $V_{L,R}$ - in the case $L$ is the Leech lattice and $R$ is a $\bQ$-algebra, this is \cite{FLM88} Theorem 10.4.6, but the proof does not depend on the particular lattice.  When $R = \bZ$, the formulas follow from base change to $\bQ$, and the proof for general $R$ follows by base change from $\bZ$.
\end{proof}

\begin{lem} \label{lem:OL-normalizes-torus}
The action of $O(\tilde{L})$ on $V_L$ normalizes the $T_L$-action.
\end{lem}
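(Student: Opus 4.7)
The plan is to verify normalization on $R$-points for each commutative ring $R$, which suffices because the normalizer is a closed subfunctor of $\uAut_{V_L/\bZ}$ and both $O(\tilde L)$ and $T_L$ are representable. Concretely, I would fix $g \in O(\tilde L)(R)$ with image $\bar g \in O(L)(R)$ under the quotient map of Proposition \ref{prop:structure-of-orthogonal-transformation-group}, and take an arbitrary $t \in T_L(R) = \Hom(L, R^\times)$; the goal is then to show that $g t g^{-1}$ coincides with the character $t \circ \bar g^{-1}$, which is visibly an element of $T_L(R)$.

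The key preliminary step is the observation that the action of $g$ on $V_{L,R}$ carries the Heisenberg piece $\pi^L_{\lambda,R}$ to $\pi^L_{\bar g \lambda,R}$ for each $\lambda \in L$. This is immediate from the formula in Proposition \ref{prop:OL-action-on-VOA}: on a spanning basis vector $s_{a^1,n_1}\cdots s_{a^k,n_k}\iota(e_\lambda)$ lying in $\pi^L_{\lambda,R}$, $g$ produces $s_{\bar g a^1,n_1}\cdots s_{\bar g a^k,n_k}(g\cdot\iota(e_\lambda))$, and $g\cdot\iota(e_\lambda) \in R\{L\}$ has image $\pm e^{\bar g \lambda}$ in $R[L]$, so the result indeed lies in $\pi^L_{\bar g \lambda,R}$.

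With this weight-shifting property in hand, I would compute the conjugate $g t g^{-1}$ on each $\pi^L_{\lambda,R}$. For $v \in \pi^L_{\lambda,R}$, applying $g^{-1}$ produces a vector in $\pi^L_{\bar g^{-1}\lambda,R}$, on which $t$ acts by the scalar $t(\bar g^{-1}\lambda)$ as in Proposition \ref{prop:torus-action}; this scalar then passes through the subsequent application of $g$, giving $(gtg^{-1})(v) = t(\bar g^{-1}\lambda)\, v$. Hence $g t g^{-1}$ acts on $\pi^L_{\lambda,R}$ by the scalar $(t\circ\bar g^{-1})(\lambda)$ for every $\lambda$, so it agrees with the element $t\circ \bar g^{-1} \in T_L(R)$. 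Since the entire calculation is functorial in $R$, the identity $g T_L g^{-1} = T_L$ holds as subgroup schemes of $\Aut V_L$, and in fact exhibits the usual $O(L)$-action on the cocharacter lattice.

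I do not anticipate a serious obstacle. The only mild subtlety is that $O(\tilde L)$ is not reduced in characteristic 2, so I should not limit myself to arguing on geometric points; the calculation above is functorial in $R$ and therefore handles the full group scheme. In addition, elements of the kernel $\uHom(\uL,\mu_2)$ of Proposition \ref{prop:structure-of-orthogonal-transformation-group} act on each $\pi^L_{\lambda,R}$ by a scalar, so they commute with $T_L$ outright, which is a useful sanity check consistent with the general conjugation formula above.
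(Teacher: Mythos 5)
Your proof is correct and follows the same route as the paper: the paper's one-line argument is precisely that $g \in O(\tilde L)(R)$ carries $\pi^L_{\lambda,R}$ to $\pi^L_{\bar g\lambda,R}$, from which $g t g^{-1} = t \circ \bar g^{-1} \in T_L(R)$ follows since $T_L$ acts on each Heisenberg piece by the scalar given by the character. You merely spell out the conjugation computation and the functoriality in $R$ that the paper leaves implicit (your only slip, writing the image of $g\cdot\iota(e_\lambda)$ as $\pm e^{\bar g\lambda}$ rather than a general $\mu_2(R)$-multiple, is harmless since any such scalar keeps the vector in $\pi^L_{\bar g\lambda,R}$).
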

\begin{proof}
This follows from the fact that for any $g \in O(\tilde{L})(R)$ and $\lambda \in L$, the corresponding automorphism of $V_{L,R}$ takes the Heisenberg module $\pi^L_{\lambda,R}$ to $\pi^L_{g\cdot \lambda,R}$.
\end{proof}

\section{The automorphism group scheme}

We show that $\uAut V_L$ is an affine group scheme, and that the two closed immersions from $O(\tilde{L},\pi)$ and $G_L$ describe a surjection $O(\tilde{L},\pi) \times G_L \to \uAut V_L$ by multiplication.

\subsection{Basic properties}

\begin{prop} \label{prop:aut-represented-by-affine-finite-type}
Let $L$ be a positive definite even lattice, and let $R$ be a commutative ring.  Then, the functor that assigns to each commutative ring $S$ over $R$, the group of homogeneous $S$-vertex algebra automorphisms of $V_{L,S}$, and to each ring homomorphism the corresponding group homomorphism, is represented by an affine group scheme of finite type over $\Spec R$.
\end{prop}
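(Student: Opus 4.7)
The plan is to reduce the statement to Lemma \ref{lem:represented-by-affine}, which already shows that for any finitely generated $\bZ$-graded vertex algebra with finite projective graded pieces over $R$, the automorphism functor is represented by a finite type affine group $R$-scheme. So the proposition follows once we verify those two hypotheses for $V_{L,R}$.

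For the graded pieces, I would argue that $V_L$ is a $\bZ$-form of $V_{L,\bQ}$ (as recorded in the theorem of Borcherds--Prasad--Dong--Griess cited just before the definition of $V_L$). Each graded piece $(V_L)_n$ therefore sits inside the finite-dimensional $\bQ$-vector space $(V_{L,\bQ})_n$ as a full $\bZ$-lattice, hence is a finitely generated torsion-free, and so free, abelian group. Tensoring with $R$ then shows that $(V_{L,R})_n = (V_L)_n \otimes_\bZ R$ is a finite free, hence finite projective, $R$-module.

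For finite generation, I would exhibit an explicit finite generating set. Choose a $\bZ$-basis $\alpha_1,\ldots,\alpha_r$ of $L$, and take the finite set $S = \{\iota(e_{\alpha_i}), \iota(e_{-\alpha_i}) : 1 \leq i \leq r\} \subset V_{L,R}$. Using the explicit formula $Y(\iota(e_\alpha),z) = E^-(-\alpha,z) E^+(\alpha,z) z^\alpha \iota(e_\alpha)$, the coefficients of $Y(\iota(e_{\alpha_i}),z)\iota(e_{-\alpha_i})$ produce precisely the divided-power operators $s_{\alpha_i,n}$ acting on $\iota(e_0) = \unit$, which span the Heisenberg piece $\pi^L_{0,R}$. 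Iterating $Y(\iota(e_{\pm\alpha_i}), z)$ against elements already shown to lie in the subvertex algebra, and inducting on the word length of $a \in L$ in the chosen basis, one obtains $\iota(e_a) \in \pi^L_{a,R}$ (up to the cocycle sign) for every $a \in L$, and then all spanning vectors $s_{a^1,n_1}\cdots s_{a^k,n_k}\iota(e_a)$ of $V_{L,R}$. Thus $S$ generates $V_{L,R}$ as an $R$-vertex algebra.

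The main obstacle in this plan is making the finite-generation argument rigorous integrally: over $\bQ$ the claim that $V_{L,\bQ}$ is generated by these few vectors is standard, but over $\bZ$ one must carefully verify that each divided-power operator $s_{a,n}$ that appears in the definition of the spanning set for $V_L$ really arises as an integer-linear combination of iterated vertex operations applied to $S$, without denominators sneaking in. Once this is in place, an application of Lemma \ref{lem:represented-by-affine} to $V_{L,R}$ yields the desired representability by a finite type affine $R$-group scheme.
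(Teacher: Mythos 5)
Your proposal follows essentially the same route as the paper: reduce to Lemma \ref{lem:represented-by-affine} and establish finite generation of $V_{L,R}$ by base change from the finite set $\{\iota(e_{\pm\alpha_i})\}$ attached to a basis of $L$. The integral finite-generation step that you flag as the main obstacle is precisely Theorem 3.3 of \cite{DG12}, which the paper simply cites rather than reproves; invoking that result (or reproducing its computation) closes the one admitted gap in your sketch, and your verification that the graded pieces are finite projective correctly supplies the hypothesis that Lemma \ref{lem:represented-by-affine} implicitly uses.
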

\begin{proof}
By Lemma \ref{lem:represented-by-affine}, it suffices to show that $V_{L,R}$ is finitely generated.  By Theorem 3.3 of \cite{DG12}, $V_{L,\bZ}$ is generated by the finite set $\{ e^{\pm \gamma_i}\}$ for $\gamma_i$ ranging over a basis of $L$.  Thus, the finite set $\{ e^{\pm \gamma_i} \otimes 1\}$ generates $V_{L,R}$.
\end{proof}

\begin{prop} \label{prop:normality}
Let $L$ be a positive definite even lattice, and let $R$ be a commutative ring.  Then the action of $\Aut V_{L,R}$ on the weight 1 subspace induces an automorphism of $(G_L)_R$.  That is, $(G_L)_R$ is a normal subgroup of $\Aut V_{L,R}$.
\end{prop}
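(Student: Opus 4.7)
The plan is to restrict an arbitrary $\phi \in \uAut_{V_L/R}(S)$ (for $S$ an $R$-algebra) to the weight $1$ subspace $(V_L)_{1,S} = \Lie(G_L)_S$, show that the resulting Lie algebra automorphism lifts to a group scheme automorphism of $(G_L)_S$, and verify it matches conjugation by $\phi$.

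Since $\phi$ preserves the $\bZ$-grading, it restricts to $\phi_1 \in GL_S((V_L)_{1,S})$. Because $\phi$ respects all products $u_n v$, the map $\phi_1$ preserves the Lie bracket $[u,v] = u_0 v$ and the invariant symmetric form $\langle u,v\rangle \unit = u_1 v$ on $\Lie(G_L)_S$. The identity $\phi \circ a_0 \circ \phi^{-1} = (\phi_1 a)_0$, coming from the definition of vertex algebra homomorphism, shows that conjugation by $\phi$ carries the derivation $a_0 \in \Lie(G_L)_S \subset \End(V_{L,S})$ to $(\phi_1 a)_0 \in \Lie(G_L)_S$. Consequently, conjugation $c_\phi \colon g \mapsto \phi g \phi^{-1}$ sends $(G_L)_S \subset (\Aut V_{L,R})_S$ to a smooth reductive subgroup $c_\phi((G_L)_S)$ with the same Lie algebra.

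To promote this equality of Lie algebras to equality of group subschemes, I would work \'etale-locally. After an \'etale base change $S \to S'$, the split maximal tori $c_\phi(T_{S'})$ and $T_{S'}$ become conjugate in $(\Aut V_{L,R})_{S'}$ by \'etale-local conjugacy of maximal tori (\cite{SGA3} Exp.\ XXII Corollary 5.3.11), and after further refinement we may assume they coincide. Then $\phi_1$ permutes the root space decomposition and, combined with its action on the common torus, induces an automorphism of the root datum of $G_L$; via Theorem \ref{thm:extend-action-to-G}, this lifts to a group scheme automorphism identifying $c_\phi((G_L)_{S'})$ with $(G_L)_{S'}$ as subgroup schemes of $(\Aut V_{L,R})_{S'}$. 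Descending via the sheaf property (Lemma \ref{lem:faithfully-flat-sheaf}) gives $c_\phi((G_L)_S) = (G_L)_S$, so $(G_L)_R$ is normal in $\Aut V_{L,R}$. The main obstacle will be this \'etale descent step: verifying coherence of the local identifications and functoriality in $S$ to obtain a morphism $\uAut_{V_L/R} \to \uAut((G_L)_R)$ inducing the stated action on the weight $1$ subspace.
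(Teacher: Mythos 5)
Your first step is exactly the paper's: the identity $\phi\, a_0\, \phi^{-1} = (\phi_1 a)_0$, together with the fact that $\phi$ preserves the weight $1$ space $(V_L)_1 = \Lie (G_L)$, shows that $c_\phi((G_L)_S)$ is a smooth closed subgroup of $\Aut V_{L,S}$ with connected fibers and the \emph{same} Lie algebra as $(G_L)_S$. At this point the paper simply concludes: since $(G_L)_R$ is smooth with connected fibers, a conjugate subgroup with the same Lie algebra must coincide with it. No \'etale localization appears in the paper's proof.

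Your attempt to upgrade the Lie-algebra equality by \'etale-local torus conjugacy has a genuine gap, and it is essentially circular. SGA3 Exp.\ XXII Cor.\ 5.3.11 conjugates two maximal tori of a \emph{fixed} reductive group scheme; here $T_{S'}$ and $c_\phi(T_{S'})$ are maximal tori of the two a priori \emph{different} reductive subgroups $(G_L)_{S'}$ and $c_\phi((G_L)_{S'})$, and you have no common reductive group containing both: $\Aut V_L$ is not yet known to be reductive, smooth, or connected at this stage (its smoothness is only established in Theorem \ref{thm:main}, using normality). Note that the paper invokes 5.3.11 only \emph{after} normality (Proposition \ref{prop:conjugate-tori}), precisely because normality is what guarantees $gT_Lg^{-1}$ lands back inside $G_L$ where the conjugacy theorem lives. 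Moreover, even if you could conjugate the tori, the conjugating element would have to lie in $G_L(S')$ for the reduction to help; if it is merely a point of $\Aut V_L$, replacing $\phi$ by $g\phi$ leaves you facing the identical unknown. Finally, Theorem \ref{thm:extend-action-to-G} does not apply in your setting: it is stated only over subrings of $\bC$ and produces module actions from torus and root-group data plus a given complex action; it is not a tool for building automorphisms of $G_L$ over an arbitrary $S'$ or for identifying subgroup schemes. (Your descent step, by contrast, is unproblematic: equality of closed subschemes of the affine scheme $\Aut V_{L,S}$ is \'etale-local, so no coherence issue arises -- the missing piece is the local statement itself.) Your underlying instinct that ``same Lie algebra plus connectedness'' deserves scrutiny is reasonable -- in characteristic $p$, distinct smooth connected subgroups can share a Lie algebra -- but the repair must go through the connected-fibers rigidity for these particular reductive subgroups, as the paper does, not through torus conjugacy in the ambient automorphism group.
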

\begin{proof}
In Proposition \ref{lem:basic-properties-of-GL}, we identified the weight 1 subspace of $V_{L,R}$ with the Lie algebra of $(G_L)_R$, in particular identifying the root spaces with the Lie subalgebras attached to the root groups.  Since any automorphism of $V_{L,R}$ fixes the weight 1 subspace, conjugation takes $(G_L)_R$ to a subgroup of $\Aut V_{L,R}$ with the same Lie algebra.  However, $(G_L)_R$ has connected fibers, so it is stabilized.
\end{proof}

\subsection{Local Conjugation}

We wish to show that $\Aut V_L$ is a product of $G_L$ and $O(\tilde{L})$.  The following lemma lets us reduce this problem to more local questions.

\begin{lem}
To show that $\Aut V_L$ is a product of $G_L$ and $O(\tilde{L})$, it suffices to show that, given a commutative ring $S$ and an $S$-point $g$ of $\Aut V_L$, there is a faithfully flat cover $\{U_i \to \Spec S\}$ such that each $g|_{U_i}$ lies in the image of the product maps on $U_i$.  
\end{lem}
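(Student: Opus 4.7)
The plan is to invoke the faithfully flat sheaf property of $\uAut V_L$ from Lemma \ref{lem:faithfully-flat-sheaf} and then appeal to the standard characterization of surjectivity for morphisms of sheaves. The statement that $\Aut V_L$ is a product of $G_L$ and $O(\tilde{L})$ is being interpreted, as in the section introduction, to mean that the multiplication morphism
\[ \mu: O(\tilde{L}) \times G_L \to \uAut V_L \]
is surjective as a morphism of fpqc sheaves on $R$-algebras. Recall that a morphism of sheaves $F \to G$ is surjective in the sheaf-theoretic sense precisely when, for every $S$ and every $g \in G(S)$, there is a cover $\{U_i \to \Spec S\}$ in the topology such that $g|_{U_i}$ lies in the presheaf image of $F(U_i) \to G(U_i)$.

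First, I would note that $O(\tilde{L})$ and $G_L$ are both representable by affine group schemes over $\bZ$, by Proposition \ref{prop:structure-of-orthogonal-transformation-group} and Lemma \ref{lem:basic-properties-of-GL} respectively; hence their product is an affine scheme and the multiplication map $\mu$ is a morphism of affine schemes. Since affine schemes are sheaves in the fpqc topology, and $\uAut V_L$ is such a sheaf by Lemma \ref{lem:faithfully-flat-sheaf}, $\mu$ is a morphism of fpqc sheaves.

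Next, suppose we are given an $S$-point $g$ of $\uAut V_L$, and a faithfully flat cover $\{U_i \to \Spec S\}$ together with $(h_i, k_i) \in (O(\tilde{L}) \times G_L)(U_i)$ with $\mu(h_i, k_i) = g|_{U_i}$ for each $i$. This shows precisely that $g$ is in the image of $\mu$ after passage to an fpqc cover, so the image sheaf of $\mu$ (that is, the sheafification of the presheaf image) contains every section of $\uAut V_L$, which means $\mu$ is surjective as a map of sheaves. Note that we do \emph{not} claim that the local pairs $(h_i, k_i)$ glue to a global $(h,k) \in (O(\tilde L) \times G_L)(S)$; that would be a stronger descent statement and is not what surjectivity demands. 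Indeed, the kernel of $\mu$ will generically be nontrivial (it is to be identified with the Tits group), so global lifts can fail to exist.

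The proof of this lemma is essentially a bookkeeping exercise in fpqc sheaf theory; there is no substantive obstacle here. The hard part, naturally, is verifying the hypothesis for arbitrary $g$, which the paper's introduction signals will be accomplished using the \'etale-local conjugacy of maximal tori in $\uAut V_L$ together with the action of $O(\tilde L)$ normalizing the split torus $T_L$ (Lemma \ref{lem:OL-normalizes-torus}); that is the content of the subsequent subsection rather than of this lemma itself.
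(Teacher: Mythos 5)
Your proof is correct and matches the paper's own argument, which likewise just cites Lemma \ref{lem:faithfully-flat-sheaf} for the sheaf property and then observes that the stated local-lifting condition is exactly the definition of sheaf surjectivity (the paper cites \cite{Stacks} Definition 00WM). Your additional remarks --- that representability of $O(\tilde{L}) \times G_L$ makes the multiplication map a morphism of sheaves, and that no global lift is required since the map has nontrivial ``kernel'' (the Tits group) --- are accurate elaborations of the same approach, not a different route.
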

\begin{proof}
By Lemma \ref{lem:faithfully-flat-sheaf}, $\Aut V_L$ is an faithfully flat sheaf, so to identify $\Aut V_L$ with the product, it suffices to show that the product map gives a sheaf surjection in the sense of \cite{Stacks} Definition 00WM.  This is precisely the condition given.
\end{proof}

As it happens, we don't need the full power of faithfully flat maps, and will only consider \'etale covers.

\begin{prop} \label{prop:conjugate-tori}
For any commutative ring $S$ and an $S$-point $g$ of $\Aut V_L$, there is an \'etale cover of $\{ S \to S_i\}$ and a choice of elements $g_i \in G_L(S_i)$ such that for all $i$, the conjugation action of $g_i (g|_{S_i})$ stabilizes $T_{L,S_i}$.
\end{prop}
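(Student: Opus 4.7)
The plan is to exploit Proposition \ref{prop:normality} together with the étale-local conjugacy theorem for maximal tori cited after the definition of maximal torus. Specifically, I would argue that $g$ conjugates $T_{L,S}$ to another maximal torus of $(G_L)_S$, and then invoke the fact that any two maximal tori of a split reductive group scheme with connected fibers are conjugate after passage to an étale cover.

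First, by Proposition \ref{prop:normality}, the action of $g \in \Aut V_{L,S}$ on the weight 1 subspace induces an automorphism of the group scheme $(G_L)_S$; equivalently, $(G_L)_S$ is a normal subgroup of $\Aut V_{L,S}$, so conjugation by $g$ restricts to an automorphism $\phi_g$ of $(G_L)_S$. Since group scheme automorphisms of reductive group schemes take maximal tori to maximal tori (this is immediate from the definition, as the property of being a maximal torus is preserved by any automorphism), $\phi_g(T_{L,S}) = g T_{L,S} g^{-1}$ is a maximal torus of $(G_L)_S$.

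Next, $(G_L)_S$ is obtained by base change from the split reductive $\bZ$-group $G_L$ with connected fibers given by Lemma \ref{lem:basic-properties-of-GL}, so \cite{SGA3} Exp. XXII Corollary 5.3.11 (as cited in the text after the definition of maximal torus) applies: any two maximal tori of $(G_L)_S$ are étale-locally conjugate. Applied to the pair $T_{L,S}$ and $gT_{L,S}g^{-1}$, this produces an étale cover $\{S \to S_i\}$ and elements $g_i \in G_L(S_i)$ with
\[ g_i \bigl(g|_{S_i}\bigr) T_{L,S_i} \bigl(g|_{S_i}\bigr)^{-1} g_i^{-1} = T_{L,S_i}, \]
which is exactly the statement that $g_i (g|_{S_i})$ normalizes $T_{L,S_i}$.

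I do not anticipate a significant obstacle: the only real content beyond citing existing results is the compatibility check that conjugation in $\Aut V_{L,S}$ restricts to an automorphism of $(G_L)_S$ sending maximal tori to maximal tori, and this is supplied by Proposition \ref{prop:normality}. The mild subtlety worth being explicit about is that we use the étale (rather than just Zariski) local conjugacy; this is necessary because over a general base $S$ the two maximal tori in question need not be Zariski-locally conjugate, but étale-local conjugacy is exactly what the forthcoming étale-sheaf argument for the surjection $O(\tilde L) \times G_L \twoheadrightarrow \Aut V_L$ requires.
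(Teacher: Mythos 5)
Your proposal is correct and follows essentially the same route as the paper's proof: normality of $(G_L)_S$ in $\Aut V_{L,S}$ (Proposition \ref{prop:normality}) shows $gT_{L,S}g^{-1}$ is a torus in $(G_L)_S$, and then \cite{SGA3} Exp.\ XXII Corollary 5.3.11 supplies the \'etale cover and the conjugating elements $g_i$. Your added remark that automorphisms preserve maximality of tori is a reasonable (and slightly more explicit) justification of a step the paper leaves implicit.
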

\begin{proof}
By Proposition \ref{prop:normality}, $G_L$ is normal in $\Aut V_L$, so conjugation by $g$ takes $T_{L,S}$ to some torus $T'$ in $G_{L,S}$.  By \cite{SGA3} Exp. XXII Corollary 5.3.11, any two maximal tori (of an fpqc locally split reductive group) are conjugate \'etale locally on $S$.  Thus, there exists a choice of \'etale cover $\{ S \to S_i\}$ and $\{g_i \in G_L(S_i) \}$ so that conjugation by each $g_i$ takes $T'_{S_i}$ to $T_{L,S_i}$.
\end{proof}

\begin{cor} \label{cor:product-with-normalizer}
$\Aut V_L$ is a product of $G_L$ and $N_{\Aut V_L} T_L$.
\end{cor}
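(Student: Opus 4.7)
The plan is to deduce the corollary as a direct combination of Proposition \ref{prop:conjugate-tori} with the unlabeled lemma immediately preceding it, which reduces the assertion ``$\Aut V_L$ is a product of $G_L$ and $N_{\Aut V_L} T_L$'' to producing, for every commutative ring $S$ and every $g \in \Aut V_L(S)$, a faithfully flat (in fact étale) cover over which $g$ factors as a product of an element of $G_L$ and an element of $N_{\Aut V_L} T_L$. That reduction is legitimate because $\Aut V_L$ is an fpqc sheaf by Lemma \ref{lem:faithfully-flat-sheaf}, so sheaf-surjectivity of the multiplication map can be checked locally.

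Given $g \in \Aut V_L(S)$, Proposition \ref{prop:conjugate-tori} furnishes an étale cover $\{S \to S_i\}$ together with elements $g_i \in G_L(S_i)$ such that the conjugation action of $g_i \cdot (g|_{S_i})$ stabilizes $T_{L,S_i}$. By definition this means $g_i \cdot (g|_{S_i}) \in (N_{\Aut V_L} T_L)(S_i)$, and then
\[ g|_{S_i} \;=\; g_i^{-1}\cdot\bigl(g_i \cdot g|_{S_i}\bigr) \]
realizes $g|_{S_i}$ as a product of an element of $G_L(S_i)$ with an element of $(N_{\Aut V_L} T_L)(S_i)$. This is exactly the local factorization hypothesis demanded by the reduction lemma, so the multiplication map
\[ G_L \times N_{\Aut V_L} T_L \;\longrightarrow\; \Aut V_L \]
is surjective as a map of fpqc sheaves, which is the meaning of the corollary.

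I do not anticipate any real obstacle: both ingredients are already in hand, and the argument is essentially bookkeeping. The one point worth flagging for the reader is that ``product'' here means sheaf-theoretic surjectivity of the multiplication map, not an isomorphism: the intersection $G_L \cap N_{\Aut V_L} T_L$ obviously contains $N_{G_L} T_L$, so the factorization is highly non-unique. This ambiguity is harmless for the corollary, and will in any event be sorted out in the subsequent analysis of $N_{\Aut V_L} T_L$, which is presumably where the main theorem identifying $\Aut V_L$ with an extension of $\Aut L / W_L$ by $G_L$ is completed.
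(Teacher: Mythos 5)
Your proposal is correct and follows essentially the same route as the paper: the paper's proof likewise invokes Proposition \ref{prop:conjugate-tori} to conclude that the multiplication map $G_L \times N_{\Aut V_L} T_L \to \Aut V_L$ is a surjection of \'etale sheaves, relying on the preceding reduction lemma and the sheaf property from Lemma \ref{lem:faithfully-flat-sheaf}. Your write-up merely makes explicit the element-level bookkeeping ($g|_{S_i} = g_i^{-1}\cdot(g_i \cdot g|_{S_i})$) and the non-uniqueness caveat that the paper leaves implicit.
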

\begin{proof}
By Proposition \ref{prop:conjugate-tori}, the map $G_L \times N_{\Aut V_L} T_L \to \Aut V_L$ induced by multiplication is a surjection of \'etale sheaves.  Hence, it is a surjection of affine schemes.
\end{proof}

\subsection{Normalizer of the distinguished torus}

\begin{defn}
Consider the homomorphism $N_{\Aut V_L}T_L \to \Aut T_L$ induced by conjugation.  We let $c : N_{\Aut V_L}T_L \to GL(L)$ be the composite of this homomorphism with the isomorphism $\Aut T_L \simto GL(L)$.
\end{defn}

\begin{lem} \label{lem:normalizer-preserves-modules}
The action of $N_{\Aut V_L}T_L$ takes Heisenberg submodules of $V_L$ to Heisenberg submodules.  That is, for any commutative ring $R$, $\lambda \in L$, and $\sigma \in N_{\Aut V_L}T_L(R)$, there is some $\mu \in L$ such that for any $u \in \pi^L_{\lambda,R}$, we have $\sigma u \in \pi^L_{\mu,R}$.
\end{lem}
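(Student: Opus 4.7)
The plan is to identify each Heisenberg submodule $\pi^L_{\lambda,R}$ with the weight space of weight $\lambda$ for the $T_L$-action on $V_{L,R}$, and then observe that a normalizer of $T_L$ necessarily permutes its weight spaces.

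First I would recall that Proposition \ref{prop:torus-action} shows $T_L = D(L)$ acts on $\pi^L_{\lambda,R}$ through the character $\lambda \in L = X^*(T_L)$. Since $V_{L,R} = \bigoplus_{\lambda \in L} \pi^L_{\lambda,R}$ and the characters $\lambda$ are pairwise distinct, the $T_L$-representation $V_{L,R}$ has an unambiguous weight-space decomposition with $\pi^L_{\lambda,R}$ being exactly the $\lambda$-weight space. (Concretely, specifying a $T_L = D(L)$-action on an $R$-module is the same as specifying an $L$-grading; the grading $V_{L,R} = \bigoplus_\lambda \pi^L_{\lambda,R}$ is precisely the one coming from the $T_L$-action.)

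Next, given $\sigma \in (N_{\Aut V_L}T_L)(R)$, conjugation by $\sigma$ defines an $R$-group scheme automorphism of $T_L$, which by Cartier duality corresponds to an automorphism $c(\sigma) \in GL(L)$. For any $R$-algebra $R'$, any $t \in T_L(R')$, and any $u \in \pi^L_{\lambda,R}$, one computes inside $V_{L,R'}$:
\[ t \cdot (\sigma u) \;=\; \sigma \cdot (\sigma^{-1} t \sigma) \cdot u \;=\; \sigma \cdot \lambda(\sigma^{-1} t \sigma) u \;=\; \lambda(\sigma^{-1} t \sigma) \cdot \sigma u. \]
The scalar $\lambda(\sigma^{-1} t \sigma)$ is the value at $t$ of the character $\mu := \lambda \circ c(\sigma)^{-1} \in L$. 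Thus $\sigma u$ is a weight vector of weight $\mu$, and by the weight-space identification $\sigma u \in \pi^L_{\mu,R}$, with $\mu = c(\sigma)^{-1}(\lambda)$ independent of $u$. This $\mu$ is the element of $L$ demanded by the statement.

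The only subtle point is ensuring the weight decomposition works functorially and over an arbitrary base ring: the $\pi^L_{\lambda,R}$ are infinite rank, but each graded piece $(V_L)_n$ is finite projective and $T_L$-stable, so the weight decomposition holds degree by degree, and the identification $\pi^L_{\lambda,R} = $ ($\lambda$-weight space) is compatible with arbitrary base change. Once that is in place, the rest is the short character computation above. I expect no genuine obstacle here; the lemma is essentially the standard fact that a group element normalizing a torus permutes the torus's isotypic components, adapted to the present functorial and graded setting.
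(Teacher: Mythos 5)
Your proof is correct, but it runs along a genuinely different track from the paper's. The paper argues qualitatively: the $T_L$-orbit of a vector in a single Heisenberg module $\pi^L_{\lambda,R}$ lies in a line, $\sigma$ is linear and bijective and normalizes $T_L$, so if $\sigma u$ had components in two or more Heisenberg modules its $T_L$-orbit could not lie in a line --- contradiction; no identification of the new weight is attempted at this stage. You instead invoke the equivalence between $D(L)$-representations and $L$-gradings (so that $\pi^L_{\lambda,R}$ \emph{is} the $\lambda$-weight space, tautologically from Proposition \ref{prop:torus-action}) and then run the one-line conjugation computation $t\cdot(\sigma u)=\lambda(\sigma^{-1}t\sigma)\,\sigma u$. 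This buys you two things. First, it is more robust over an arbitrary base: the paper's ``not contained in a line'' step silently needs enough units in $R$ to separate characters (an issue the paper only confronts in the next lemma, by base-changing to $R[L]$), whereas your argument, read via the coaction or the universal point $g\in T_L(R[L])$, works uniformly --- and note the comodule--grading correspondence for diagonalizable groups holds without any finiteness, so your degree-by-degree reduction to finite projective pieces, while harmless, is not even needed. Second, your computation identifies $\mu$ explicitly as $c(\sigma)^{\pm 1}\lambda$ (the sign of the exponent is pure convention in the definition of $c$), so it simultaneously proves the stronger Lemma \ref{lem:normalizer-action-compatible}, rendering the paper's separate argument there redundant. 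One caveat, which you share with the paper's own formulation: over a disconnected $\Spec R$, the conjugation automorphism of $T_{L,R}$ corresponds only to a \emph{locally constant} function to $GL(L)$, so $c(\sigma)$, and hence $\mu$, need only exist Zariski-locally; both your proof and the lemma as stated should be read on connected components, which is all that the downstream arguments use.
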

\begin{proof}
The orbit of $T_L$ acting on any nonzero vector in $\pi^L_{\lambda,R}$ is one dimensional, and contained in a line.  Let $u \in \pi^L_{\lambda,R}$, and write $\sigma u = \sum_{i \in I} u_i$ for $u_i \in \pi^L_{\mu_i,R}$, where the map $i \mapsto \mu_i$ is injective.  If the indexing set $I$ has more than one element, then the orbit of $T_L$ on $\sigma u$ is not contained in a line.  This contradicts the fact that $\sigma$ acts linearly and bijectively.  We conclude that $I$ is a singleton.
\end{proof}

\begin{lem} \label{lem:normalizer-action-compatible}
Points of $N_{\Aut V_L}T_L$ take Heisenberg submodules of $V_L$ to Heisenberg submodules in a manner compatible with the homomorphism $c$ to $GL(L)$.  Specifically, for any commutative ring $R$, $\lambda \in L$, and $\sigma \in N_{\Aut V_L}T_L(R)$, we have $\sigma (u) \in \pi^L_{c(\sigma)\cdot \lambda,R}$ for all $u \in \pi^L_{\lambda,R}$. 
\end{lem}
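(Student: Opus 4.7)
My plan is to compute the scalar $T_L$-action on $\sigma(u)$ for $u \in \pi^L_{\lambda,R}$ in two ways and compare. By Lemma \ref{lem:normalizer-preserves-modules} there is a single $\mu \in L$ with $\sigma(\pi^L_{\lambda,R}) \subseteq \pi^L_{\mu,R}$; the task is to identify $\mu$ with $c(\sigma)\cdot\lambda$. I take $u = \iota(e_\lambda) \in \pi^L_{\lambda,R}$, which is a basis element of $\pi^L_\lambda$ over $\bZ$ and hence extends to a $\bZ$-basis of $V_L$. Since $\sigma$ is an $R$-module automorphism of $V_{L,R}$, the image $\sigma(u)$ is part of an $R$-basis of $V_{L,R}$, and so its $R$-annihilator is zero. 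For any $t \in T_L(R)$ the operator identity $\sigma \circ t = (\sigma t \sigma^{-1}) \circ \sigma$, combined with the scalar torus action of Proposition \ref{prop:torus-action}, yields
\[ t(\lambda)\,\sigma(u) \;=\; \sigma(t \cdot u) \;=\; (\sigma t \sigma^{-1})(\sigma(u)) \;=\; (\sigma t \sigma^{-1})(\mu)\,\sigma(u), \]
so that $(\sigma t \sigma^{-1})(\mu) = t(\lambda)$ in $R^\times$ for every $t \in T_L(R)$.

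To translate this into an equation in $L$, I use that $c$ is defined so that the conjugation $t \mapsto \sigma t \sigma^{-1}$ on $T_L$ corresponds, under the duality isomorphism $\Aut(T_L) \simto GL(L)$, to precomposition with $c(\sigma)^{-1}$ on characters, that is $(\sigma t \sigma^{-1})(\mu) = t(c(\sigma)^{-1}\mu)$. The previous identity then reads
\[ t(\lambda) \;=\; t(c(\sigma)^{-1}\mu) \qquad \text{for all } t \in T_L(R). \]
To upgrade this pointwise equality of characters to an equality in $L$, I base-change along the faithfully flat ring map $R \to R[L]$ and evaluate the resulting identity on the universal character $\chi \in T_L(R[L])$ given by $\chi(\nu) = e^\nu$. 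The homomorphism $\chi: L \to R[L]^\times$ is injective because $\{e^\nu : \nu \in L\}$ is an $R$-basis of $R[L]$, so $\lambda = c(\sigma)^{-1}\mu$ in $L$ and therefore $\mu = c(\sigma)\cdot\lambda$, as required.

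The main obstacle is the last step, namely the passage from $t(\lambda) = t(c(\sigma)^{-1}\mu)$ holding for all $t \in T_L(R)$ to the corresponding equality in $L$ itself. Over a general commutative ring $R$ the character group $\Hom(L, R^\times)$ can fail badly to separate points of $L$ (for example, over $R = \bZ$ it only sees $L/2L$), so a separating character has to be supplied by base change; the group ring $R[L]$ is the canonical choice and is faithfully flat over $R$, which is what lets the resulting equation in $L$ be used back over $R$. If $\Spec R$ is disconnected, then both the $\mu$ provided by Lemma \ref{lem:normalizer-preserves-modules} and the image $c(\sigma) \in GL(L)(R)$ are only locally constant on $\Spec R$, and the identity $\mu = c(\sigma)\cdot\lambda$ should be read componentwise; this adds no essential difficulty.
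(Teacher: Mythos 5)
Your proposal is correct and takes essentially the same route as the paper: the paper's proof likewise passes to the group ring $R[L]$ to obtain an injective character $g\colon L \to R^\times$ inside $T_L$, compares the two scalar actions via $(\sigma^{-1}g\sigma)(\lambda)u = g(\mu)u$, and concludes $\mu = c(\sigma)\lambda$ from injectivity. Your version (conjugating $t$ by $\sigma$ and acting on $\sigma(u)$ instead, with the explicit annihilator-is-zero and duality-convention checks, and the componentwise reading over disconnected $\Spec R$) differs only cosmetically and, if anything, makes the implicit steps more careful.
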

\begin{proof}
We may assume, by suitably replacing $R$ with a suitable $R$-algebra such as the group ring $R[L]$, that $R^\times$ admits an injective homomorphism $g$ from $L$.  By Lemma \ref{lem:normalizer-preserves-modules}, we see that $\sigma$ induces a linear isomorphism $\pi^L_{\lambda,R} \to \pi^L_{\mu,R}$ for some $\mu$.  Thus, for any $u \in \pi^L_{\lambda,R}$, we have
\[ \begin{aligned}
(\sigma^{-1}g\sigma)(\lambda)u &= \sigma^{-1} g \sigma u \\
&= \sigma^{-1} g(\mu) \sigma (u) \\
&= g(\mu) u,
\end{aligned} \]
where we freely switch between viewing elements of $T_L(R)$ as automorphisms of $V_{L,R}$ and as homomorphisms $L \to R^\times$.  Since $\sigma^{-1} g \sigma$ is also injective as a map $L \to R^\times$, this implies $c(\sigma)\lambda = \mu$.
\end{proof}

\begin{lem} \label{lem:image-of-normalizer}
The image of $c$ lies in the subgroup $O(L)$ of $GL(L)$, i.e., norms are preserved.
\end{lem}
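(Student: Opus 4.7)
The plan is to recover the quadratic form on $L$ from purely internal data of $V_L$ as a $\bZ$-graded vertex algebra, and then apply this recovery to both $\lambda$ and $c(\sigma)\lambda$. The key invariant will be the lowest degree at which each Heisenberg component $\pi^L_{\lambda,R}$ is nonzero; by Lemma \ref{lem:normalizer-action-compatible}, $\sigma$ restricts to a graded $R$-linear bijection $\pi^L_{\lambda,R} \simto \pi^L_{c(\sigma)\lambda,R}$, which will force these lowest degrees to coincide.

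First I would reduce to a pointwise statement. Since $T_L$ is a split torus, $\Aut T_L$ is the constant group scheme associated to $GL(L)$, so $c(\sigma) \in GL(L)(R)$ is locally constant on $\Spec R$. Passing to a connected component, we may replace $c(\sigma)$ by a single element $\bar\sigma \in GL(L)$, and it suffices to show that $\bar\sigma$ preserves $(\cdot,\cdot)$.

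Next I would unwind the definition of $V_L$ to locate the lowest degree of $\pi^L_{\mu,R}$. Each generator $s_{a^1,n_1}\cdots s_{a^k,n_k}\iota(e_\mu)$ of $\pi^L_{\mu,R}$ sits in degree $(\mu,\mu)/2 + n_1 + \cdots + n_k$, since $\iota(e_\mu)$ has degree $(\mu,\mu)/2$ and each $s_{a,n}$ raises degree by $n$. Consequently the degree-$(\mu,\mu)/2$ component is the free rank-one module $R\cdot\iota(e_\mu)$, and no nonzero element of $\pi^L_{\mu,R}$ lies in strictly smaller degree. The statement holds over $\bZ$ and persists under base change so long as $R \neq 0$.

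Combining these observations, the degree-preserving bijection $\pi^L_{\lambda,R} \simto \pi^L_{\bar\sigma\lambda,R}$ forces $(\lambda,\lambda)/2 = (\bar\sigma\lambda, \bar\sigma\lambda)/2$ for every $\lambda \in L$. Polarization (valid over $\bZ$) then upgrades this to preservation of the bilinear form, so $\bar\sigma \in O(L)$. I expect no serious obstacle here: all the substantive content is either built into Lemma \ref{lem:normalizer-action-compatible} or immediate from the explicit construction of $V_L$. The one mild point is to confirm that the lowest-degree identification survives arbitrary base change, which it does because the relevant graded piece is already free of rank one over $\bZ$.
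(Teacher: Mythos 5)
Your proposal is correct and follows essentially the same route as the paper: Lemma \ref{lem:normalizer-action-compatible} gives a graded bijection $\pi^L_{\lambda,R} \simto \pi^L_{c(\sigma)\lambda,R}$, so the lowest weights agree, and since the lowest weight of $\pi^L_{\mu,R}$ is $(\mu,\mu)/2$, norms (hence, by polarization, the bilinear form) are preserved. Your additional observations --- the reduction to a locally constant $c(\sigma)$ via the constant group scheme $\uAut T_L \cong GL(L)$, and the stability of the rank-one lowest-degree piece under base change to $R \neq 0$ --- are details the paper leaves implicit, but they are correct and do not change the argument.
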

\begin{proof}
By Lemma \ref{lem:normalizer-action-compatible}, the action on Heisenberg modules is compatible with the map to $GL(L)$, and so the lowest weights of Heisenberg modules are preserved. The result follows from the fact that the lowest weight is half of the norm of a lattice vector.
\end{proof}

\begin{prop} \label{prop:normalizer-as-product}
The normalizer of $T_L$ in $\Aut V_L$ is the product $O(\tilde{L})C_{\Aut V_L} T_L$.  In particular, $c$ induces an isomorphism $(N_{\Aut V_L}T_L)/C_{\Aut V_L}T_L \simto O(L)$ taking a normalizing automorphism to the induced orthogonal transformation on the lattice parametrizing Heisenberg modules in $V_L$.
\end{prop}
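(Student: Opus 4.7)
The plan is to analyze $c$ by identifying its kernel, showing it is surjective onto $O(L)$, and then splitting it étale-locally via $O(\tilde{L})$.

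First I would identify the kernel. By construction, $c(\sigma)$ is the automorphism of $L \cong X^*(T_L)$ induced by conjugation by $\sigma$, so $\ker c$ is exactly the subgroup sheaf whose points conjugate $T_L$ to itself pointwise, namely $C_{\Aut V_L} T_L$.

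Next, for surjectivity onto $O(L)$: by Lemma \ref{lem:image-of-normalizer}, $c$ factors through $O(L)$. The inclusion $O(\tilde{L}) \hookrightarrow \Aut V_L$ of Proposition \ref{prop:OL-action-on-VOA} lands in $N_{\Aut V_L} T_L$ by Lemma \ref{lem:OL-normalizes-torus}, giving a composite $O(\tilde{L}) \to N_{\Aut V_L} T_L \xrightarrow{c} O(L)$. I would verify that this composite coincides with the quotient map $O(\tilde{L}) \to O(L)$ from Proposition \ref{prop:structure-of-orthogonal-transformation-group}: for $g \in O(\tilde{L})(R)$ with image $\bar{g} \in O(L)(R)$, the action defined in Proposition \ref{prop:OL-action-on-VOA} sends $\pi^L_{\lambda,R}$ into $\pi^L_{\bar{g}\lambda,R}$, and Lemma \ref{lem:normalizer-action-compatible} identifies $\bar{g}\lambda$ with $c(g)\cdot\lambda$, forcing $c(g) = \bar{g}$. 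Since $O(\tilde{L}) \to O(L)$ is already a sheaf surjection by Proposition \ref{prop:structure-of-orthogonal-transformation-group}, this factorization forces $c$ to be one as well.

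Finally I would deduce the product decomposition. Given $\sigma \in N_{\Aut V_L} T_L(R)$, sheaf surjectivity of $O(\tilde{L}) \to O(L)$ produces an étale cover $R \to R'$ and a lift $\tau \in O(\tilde{L})(R')$ of $c(\sigma|_{R'})$. Then $\tau^{-1}\sigma|_{R'}$ lies in $\ker c(R') = C_{\Aut V_L} T_L(R')$, so $\sigma|_{R'}$ is a product of an element of $O(\tilde{L})(R')$ and an element of $C_{\Aut V_L} T_L(R')$. Since $\Aut V_L$ is an fpqc sheaf by Lemma \ref{lem:faithfully-flat-sheaf}, this étale-local surjectivity upgrades to surjectivity of the multiplication map $O(\tilde{L}) \times C_{\Aut V_L} T_L \to N_{\Aut V_L} T_L$ as a map of affine group schemes, and the kernel computation gives the induced isomorphism $(N_{\Aut V_L} T_L)/C_{\Aut V_L} T_L \simto O(L)$.

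The main obstacle is the compatibility check in the second step: one must trace through the constructions of Proposition \ref{prop:OL-action-on-VOA} and Proposition \ref{prop:structure-of-orthogonal-transformation-group} to confirm that $c|_{O(\tilde{L})}$ really is the canonical projection to $O(L)$, so that surjectivity transfers. Once this identification is secured, the remaining argument is formal manipulation of sheaves and kernels.
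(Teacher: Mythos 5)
Your proposal is correct and takes essentially the same route as the paper's proof: identify $\ker c = C_{\Aut V_L} T_L$, check that $c$ restricted to $O(\tilde{L})$ is the canonical surjection onto $O(L)$ (your compatibility verification via Lemma \ref{lem:normalizer-action-compatible} just makes explicit what the paper leaves implicit), and deduce the product decomposition and the induced isomorphism. The only superfluous step is the \'etale-local lifting: Proposition \ref{prop:structure-of-orthogonal-transformation-group} establishes surjectivity of $O(\tilde{L}) \to O(L)$ on $R$-points directly (by partitioning $\Spec R$ into clopen pieces where the orthogonal transformation is constant), so the lift $\tau$ of $c(\sigma)$ already exists over $R$ itself and no cover or descent argument is needed.
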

\begin{proof}
By Proposition \ref{prop:OL-action-on-VOA}, we know that $N_{\Aut V_L}T_L$ contains $O(\tilde{L})$.  Thus, it suffices to show $N_{\Aut V_L}T_L \subseteq O(\tilde{L})C_{\Aut V_L} T_L$.

The conjugation homomorphism $c: N_{\Aut V_L}T_L \to GL(L)$ has kernel $C_{\Aut V_L} T_L$ and by Lemma \ref{lem:image-of-normalizer}, its image lies in $O(L)$.  The centralizer of $T_L$ in $O(\tilde{L})$ is the kernel of the homomorphism to $O(L)$, and is therefore identified with $\Hom(L,\mu_2)$.  Thus, the image of $O(\tilde{L})$ under $c$ is precisely $O(L)$, and we have an isomorphism.
\end{proof}

We can now show that $T_L$ is its own centralizer in the automorphism group scheme of $V_L$.

\begin{prop} \label{prop:centralizer}
$C_{\Aut V_L} T_L = T_L$.
\end{prop}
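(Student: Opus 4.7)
My plan is: for each commutative $R$-algebra $S$ and each $\sigma \in C_{\Aut V_L}T_L(S)$, produce an explicit character $s \in T_L(S) = \Hom(L, S^\times)$ and identify $\sigma$ with the action of $s$. The reverse inclusion $T_L \subseteq C_{\Aut V_L}T_L$ is automatic because $T_L$ is commutative, so the content is the containment $C_{\Aut V_L}T_L \subseteq T_L$.

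First, since the $T_L$-weight decomposition of $V_{L,S}$ coincides with the Heisenberg-module decomposition $V_{L,S} = \bigoplus_{\lambda \in L} \pi^L_{\lambda,S}$, any $\sigma$ commuting with $T_L$ preserves each $\pi^L_{\lambda,S}$ (this is a special case of the idea in Lemma \ref{lem:normalizer-preserves-modules}). Because $\sigma$ also preserves the vertex algebra grading, it preserves the lowest-weight subspace of $\pi^L_{\lambda,S}$, which is the free rank-one $S$-module $S\iota(e_\lambda)$. So there is a unique unit $s(\lambda) \in S^\times$ with $\sigma\iota(e_\lambda) = s(\lambda)\iota(e_\lambda)$.

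Next, I will show $s: L \to S^\times$ is a group homomorphism. Expanding the defining formula $Y(\iota(e_a),z)\iota(e_b) = E^-(-a,z)E^+(a,z)z^a\iota(e_a)\iota(e_b)$ from Theorem \ref{thm:lattice-VOA-Q} and reading off the lowest-order term in $z$ gives the identity $(\iota(e_a))_{-\langle a,b\rangle-1}\iota(e_b) = c(a,b)\iota(e_{a+b})$, where $c(a,b) \in \{\pm 1\} \subseteq S^\times$ is the cocycle coming from $\hat{L}$. Applying the vertex algebra automorphism $\sigma$ to both sides, reading off the coefficient of $\iota(e_{a+b})$, and dividing by the unit $c(a,b)$ yields $s(a+b) = s(a)s(b)$. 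Thus $s \in T_L(S)$.

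Finally, the composite $t = s^{-1}\sigma$ lies in $C_{\Aut V_L}T_L(S)$ and satisfies $t\iota(e_\lambda) = \iota(e_\lambda)$ for every $\lambda \in L$ by construction. By Theorem 3.3 of \cite{DG12} (as invoked in Proposition \ref{prop:aut-represented-by-affine-finite-type}), $V_L$ is generated as a vertex algebra over $\bZ$ by the finite set $\{\iota(e_{\pm \gamma_i})\}$ attached to any basis $\{\gamma_i\}$ of $L$, so a vertex algebra automorphism fixing these generators must be the identity. Hence $t = \id$ and $\sigma = s$, giving the claimed containment. The argument is uniform in $S$, so one obtains equality of functors and hence of group schemes, with no restriction on the characteristic of $R$ since the lowest-weight lines remain free of rank one and the cocycle values $\pm 1$ remain units in every $S$. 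I do not foresee a serious obstacle beyond the routine expansion of the vertex operator product, which is already implicit in Theorem \ref{thm:lattice-VOA-Q}.
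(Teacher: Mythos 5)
Your proof is correct, and its overall skeleton matches the paper's: both arguments show that a centralizing automorphism preserves each Heisenberg module $\pi^L_{\lambda}$, acts on the free rank-one lowest-weight line by a unit $s(\lambda)$, and then identify $\lambda \mapsto s(\lambda)$ as a character of $L$. Where you diverge is in the two steps the paper leaves terse. First, you make explicit why $s$ is multiplicative, via the identity $(\iota(e_a))_{-\langle a,b\rangle-1}\iota(e_b) = \epsilon(a,b)\iota(e_{a+b})$ (the mode index is forced by the grading axiom, the coefficient by the constant term of $E^-$); the paper compresses this into the one-line assertion that compatibility with the vertex algebra structure makes the scalars a homomorphism. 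Second, and more substantively, your endgame is different: the paper argues that $g$ fixes $\pi^L_{0,R}$ pointwise (a claim that really needs the small extra observation that $g$ fixes $L \otimes R \subseteq (V_L)_1$ by nondegeneracy of the pairing, hence commutes with all Heisenberg operators $h(n)$) and then propagates the scalar $s(\lambda)$ from the lowest-weight vector to all of $\pi^L_{\lambda}$ using that each Heisenberg module is generated by its lowest-weight line; you instead twist by $s^{-1}$ and invoke the generation of $V_L$ by $\{\iota(e_{\pm\gamma_i})\}$ from \cite{DG12}, already cited in Proposition \ref{prop:aut-represented-by-affine-finite-type}, so that an automorphism fixing the generators is the identity. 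Your route is slightly more economical and sidesteps the paper's tersest step, at the cost of importing the Dong--Griess generation theorem. One point you should state explicitly in your first step: preservation of the weight spaces must be read scheme-theoretically --- an $S$-point of the centralizer commutes with the $T_L$-action after every base change (equivalently, with the universal point over the group ring $S[L]$), which is exactly what Lemma \ref{lem:normalizer-action-compatible} with $c(\sigma) = \id$ delivers; commuting merely with the group $T_L(S)$ would not suffice over rings with few units, e.g.\ $S = \bF_2$.
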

\begin{proof}
Let $g \in C_{\Aut V_L} T_L (R)$ for some $R$.  Then, because $g$ preserves the $L$-grading of $V_L$, $g$ fixes the Heisenberg subalgebra $\pi^L_{0,R}$ of $V_{L,R}$ pointwise, and by Lemma \ref{lem:normalizer-action-compatible}, $g$ also fixes each Heisenberg module $\pi^L_{\lambda,R}$ setwise.  Since each Heisenberg module is generated by its free rank one lowest-weight space, $g$ acts on these spaces as invertible scalars.  The fact that $g$ is compatible with the vertex algebra structure on $V_{L,R}$ then implies these scalars are given by a group homomorphism $L \to R^\times$, so $g \in T_L(R)$.
\end{proof}

\begin{cor} \label{cor:homogeneous-aut-VOA-aut}
Let $L$ be an even lattice, and let $R$ be a commutative ring in which $\det L$ is invertible.  Then, the inclusion of the group of $R$-vertex operator algebra automorphisms of $(V_{L,R}, \omega)$ into the group of homogeneous $R$-vertex algebra automorphisms of $V_{L,R}$ is an isomorphism.
\end{cor}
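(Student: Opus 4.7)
The plan is to leverage the structural results obtained in the preceding subsection. By combining Corollary \ref{cor:product-with-normalizer} with Propositions \ref{prop:normalizer-as-product} and \ref{prop:centralizer}, together with the inclusion $T_L \subset G_L$ and Lemma \ref{lem:OL-normalizes-torus} (so that $O(\tilde{L})$-conjugates of $T_L$-elements stay in $G_L$), we see that every section of $\Aut V_{L,R}$ can be written \'etale-locally as a product $g\cdot h$ with $g$ a section of $G_L$ and $h$ a section of $O(\tilde{L})$. The subfunctor of automorphisms satisfying $\sigma\omega = \omega$ is a closed subscheme of $\Aut V_{L,R}$, cut out by the vanishing of the morphism $\sigma \mapsto \sigma\omega - \omega$ into the weight-$2$ submodule of $V_{L,R}$; such a closed condition is preserved by faithfully flat descent. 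Hence it suffices to verify that both $G_L$ and $O(\tilde{L})$ fix $\omega$.

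For $G_L$, I would check this on the generators, namely the torus $T_L$ and each root group $U_\alpha$. An element $g \in T_L(R) = \Hom(L,R^\times)$ acts on $\pi^L_{\lambda,R}$ by the scalar $g(\lambda)$, so it acts as the identity on $\pi^L_{0,R}$; since $\omega \in \pi^L_{0,R}$, it is fixed. For the root group $U_\alpha$, generated by $\exp(r(e_\alpha)_0)$, it suffices to show $(e_\alpha)_0\omega = 0$. Using the Borcherds commutator formula one computes $[(e_\alpha)_0, \alpha_i(-1)] = -(\alpha,\alpha_i)(e_\alpha)_{-1}$, and skew-symmetry gives $(e_\alpha)_0\alpha_j(-1)\unit = -(\alpha,\alpha_j)e_\alpha$ as well as $(e_\alpha)_{-1}\alpha_j(-1)\unit = \alpha_j(-1)e_\alpha - (\alpha,\alpha_j)\alpha(-1)e_\alpha$. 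Inserting these into $\omega = \frac{1}{2}\sum_{i,j}(G^{-1})_{ij}\alpha_i(-1)\alpha_j(-1)\unit$ and contracting via the basis-dual-basis identity $\sum_i (G^{-1})_{ij}(\alpha,\alpha_i) = c_j$ (where $\alpha = \sum_j c_j\alpha_j$), the resulting expression collapses, with the final cancellation driven by the root condition $(\alpha,\alpha)=2$. The calculation is carried out in $V_{L,\bQ}$ and transfers to $V_{L,R}$ because $V_L$ is a $\bZ$-form of $V_{L,\bQ}$.

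For $O(\tilde{L})$, any section acts on the weight-$2$ part of $\pi^L_0$ through its image in the quotient $O(L)$, sending $\alpha_i(-1)\alpha_j(-1)\unit \mapsto (\bar{h}\alpha_i)(-1)(\bar{h}\alpha_j)(-1)\unit$. The formula for $\omega$ is basis-independent, because the tensor $\sum_{i,j}(G^{-1})_{ij}\alpha_i\otimes\alpha_j$ is the invariant symmetric tensor dual to the bilinear form; substituting the transformed basis $\{\bar{h}\alpha_i\}$ therefore yields the same element, so $\bar{h}\omega = \omega$. The main obstacle is the identity $(e_\alpha)_0\omega = 0$, a direct but somewhat delicate Fock-space computation whose coefficients and root normalization are tightly intertwined. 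One could alternatively bypass the explicit calculation by recognizing $\omega$ as the Segal-Sugawara element attached to the adjoint action of the simply-laced group $G_L$, which is manifestly $G_L$-invariant; but in the scheme-theoretic setting this requires some additional setup, so the direct calculation seems preferable.
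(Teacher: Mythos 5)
Your proposal is correct and follows essentially the same route as the paper: reduce via Corollary \ref{cor:product-with-normalizer} and Proposition \ref{prop:normalizer-as-product} to an \'etale-local factorization $\sigma = gh$ with $g \in G_L$, $h \in O(\tilde{L})$ (descent being harmless since fixing $\omega$ is a closed condition), then check $O(\tilde{L})$-invariance from the basis-independence of the dual-basis formula for $\omega$, triviality of the $T_L$-action from $\omega \in \pi^L_{0}$, and annihilation by the root vectors. The only divergence is at the last step, where your explicit Fock-space computation of $(e_\alpha)_0\omega = 0$ (which is correct: the contractions give $-\tfrac{1}{2}\alpha(-1)e_\alpha - \tfrac{1}{2}\alpha(-1)e_\alpha + \tfrac{1}{2}(\alpha,\alpha)\alpha(-1)e_\alpha = 0$) replaces the paper's one-line argument via the primary-field commutator $[h_m, L(n)] = mh_{m+n}$, giving $h_0\omega = h_0L(-2)\unit = L(-2)h_0\unit = 0$.
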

\begin{proof}
It suffices to show that any homogeneous $R$-vertex algebra automorphism $\sigma$ of $V_{L,R}$ fixes $\omega$.  By Corollary \ref{cor:product-with-normalizer} and Proposition \ref{prop:normalizer-as-product}, we may write $\sigma$ as a product $gh$, where $g \in G_L(R')$ and $h \in O(\tilde{L})(R')$ for $R \hookrightarrow R'$ \'{e}tale.  The formula for $\omega$ can be written as a sum over dual basis vectors, so it is invariant under orthogonal transformations, and in particular, the action of $O(\tilde{L})$.  It therefore suffices to show that $\omega$ is $G_L$-invariant, but for this it suffices to show that the action of $T_L$ and the infinitesimal actions of root vectors $h_0$ are trivial.  Because $\omega$ lies in lattice-degree zero, $T_L$ acts trivially.  For any root vector $h$, the well-known commutator formula $[h_m,L(n)] = mh_{m+n}$ (\cite{FLM88} eq. 8.7.20) implies $h_0 \omega = h_0 L(-2) \unit = L(-2) h_0 \unit = 0$.
\end{proof}

We now consider the intersection of $O(\tilde{L})$ and $G_L$ in $\Aut V_L$.

\begin{defn}
Let $L$ be an even lattice.
\begin{itemize}
\item The \textbf{Weyl group} $W_L$ of $L$ is the subgroup of $O(L)$ generated by reflections in norm 2 vectors.
\item The \textbf{outer automorphism group} of $L$ is the quotient $O(L)/W_L$.
\item The \textbf{Tits group} $\tilde{W}_L$ of $G_L$ is the subgroup of $N_{G_L}T_L$ generated by $N_{G_L}(T_L)(\bZ)$-translates of the 2-torsion subgroup of the torus $\Hom(L,\mu_2) = T_L[2] \subset T_L$ (see example 6.4.3 in \cite{C14} for a modern treatment).
\end{itemize}
\end{defn}

\begin{lem} \label{lem:tits-to_weyl}
The restriction of $c$ to $N_{G_L}T_L$ induces an isomorphism $N_{G_L}T_L/T_L \simto W_L$.  Further restricting to $\tilde{W}_L$ induces an isomorphism $\tilde{W}_L/Hom(L,\mu_2) \simto W_L$.
\end{lem}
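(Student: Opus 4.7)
The plan is to derive both isomorphisms from the standard theory of split reductive groups applied to $(G_L, T_L, L)$. The first isomorphism is essentially the fact that $N_G(T)/T$ is the Weyl group of the root datum, combined with our identification of that root datum in Lemma \ref{lem:root-datum-attached-to-a-lattice}. The second then reduces to the Tits group construction, which provides a canonical lift of $W_L$ to $N_{G_L}T_L$ modulo the 2-torsion subgroup $T_L[2]$.

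For the first isomorphism, I begin by observing $C_{G_L} T_L = T_L$: indeed, $T_L \subseteq C_{G_L}T_L \subseteq C_{\Aut V_L}T_L = T_L$ by Proposition \ref{prop:centralizer}. Therefore the quotient sheaf $N_{G_L}T_L/T_L$ is, by the structure theory of split reductive groups (\cite{SGA3} Exp.\ XXII), the constant group scheme attached to the Weyl group of the root datum of $(G_L, T_L)$. By Definition \ref{defn:GL} and Lemma \ref{lem:root-datum-attached-to-a-lattice}, the roots are $\Phi = \{a \in L : (a,a) = 2\}$, so this Weyl group is generated by the reflections $s_a$ for $a \in \Phi$, hence is precisely $W_L$. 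The map $c$ is defined as conjugation on $T_L$ transported along the duality isomorphism $\Aut T_L \simto GL(L)$; under this dictionary the Weyl group acts on $L$ via the usual reflections $s_a$, yielding the claimed isomorphism $N_{G_L}T_L/T_L \simto W_L$.

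For the second isomorphism, the Tits group $\tilde{W}_L$ is generated by $T_L[2] = \Hom(L,\mu_2)$ together with elements of $N_{G_L}(T_L)(\bZ)$. For each simple root $\alpha$, the pinning gives a canonical lift $n_\alpha = \phi_\alpha\left(\begin{smallmatrix} 0 & -1 \\ 1 & 0 \end{smallmatrix}\right) \in N_{G_L}(T_L)(\bZ) \subseteq \tilde{W}_L$ with $c(n_\alpha) = s_\alpha$; since the $s_\alpha$ generate $W_L$, the restriction $c|_{\tilde{W}_L}$ surjects onto $W_L$. Its kernel is $\tilde{W}_L \cap T_L$, and using normality of $T_L$ in $N_{G_L}T_L$, any element of this intersection can be written as a product in $T_L(\bZ) \cdot T_L[2] = \Hom(L,\{\pm 1\}) \cdot T_L[2] = T_L[2]$; conversely $T_L[2] \subseteq \tilde{W}_L \cap T_L$ is immediate. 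This yields the isomorphism $\tilde{W}_L/\Hom(L,\mu_2) \simto W_L$. The main obstacle, such as it is, is unwinding the paper's definition of $\tilde{W}_L$ to match the standard construction (e.g.\ \cite{C14} Example 6.4.3), after which everything is formal.
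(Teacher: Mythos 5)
Your proposal is correct and follows essentially the same route as the paper, whose entire proof is the observation that the Weyl group of a split reductive group is the subgroup of automorphisms of the character lattice generated by reflections in roots --- exactly the SGA3 fact you invoke, combined with the root datum of Lemma \ref{lem:root-datum-attached-to-a-lattice}. Your treatment of the second isomorphism (the pinning lifts $n_\alpha$, normality of $T_L[2]$, and the identification $T_L(\bZ) = \Hom(L,\{\pm 1\}) \subseteq T_L[2]$ in the kernel computation) simply makes explicit what the paper's one-sentence proof leaves implicit, and all of those steps check out.
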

\begin{proof}
This follows from the fact that the Weyl group of an algebraic group is the subgroup of automorphisms of the character group generated by reflections in roots.
\end{proof}

\begin{lem} \label{lem:tits-embedding-outer}
Viewed as a subgroup of $N_{\Aut V_L} T_L$, $\tilde{W}_L$ is a normal subgroup of $O(\tilde{L})$, given by the preimage of $W_L$ in $O(L)$ under the $\Hom(L,\mu_2)$-cover $\pi: O(\tilde{L}) \to O(L)$.  In particular, the quotient $O(\tilde{L})/\tilde{W}_L$ is isomorphic to the outer automorphism group $O(L)/W_L$ of $L$.
\end{lem}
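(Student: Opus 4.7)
The plan is to first establish $\tilde{W}_L \subseteq O(\tilde{L})$, then identify this subgroup with $\pi^{-1}(W_L)$ via a comparison of finite flat group schemes of equal rank, and finally deduce normality and the quotient formula. To begin, the generating piece $T_L[2] = \Hom(L, \mu_2)$ of $\tilde{W}_L$ is already identified in Proposition \ref{prop:structure-of-orthogonal-transformation-group} with the kernel of $\pi: O(\tilde{L}) \to O(L)$, so it lies in $O(\tilde{L})$ automatically. For the remaining generators, namely the canonical Weyl reflection lifts $n_\alpha \in N_{G_L}(T_L)(\bZ)$ attached to roots $\alpha$ (norm 2 vectors in $L$), I would compute the action of $n_\alpha$ on the basis vectors $\iota(e_\lambda)$ of $V_L$ using the standard expression of $n_\alpha$ as a triple product of exponentials of the zero modes of $\iota(e_\alpha)$ and $\iota(e_{-\alpha})$. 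The resulting action should be a signed permutation of $\{\iota(e_\lambda)\}_{\lambda \in L}$ realizing the reflection $s_\alpha$, which is precisely the action of an element of $O(\tilde{L})(\bZ)$ lifting $s_\alpha$. Alternatively, I could observe $\tilde{W}_L(\bC) \subseteq O(\tilde{L})(\bC)$ from \cite{DN98} and use flatness of $\tilde{W}_L$ together with the closed immersion $O(\tilde{L}) \hookrightarrow \Aut V_L$ to extend the containment to $\bZ$.

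With $\tilde{W}_L \subseteq O(\tilde{L})$ in hand, the map $c: N_{\Aut V_L}T_L \to O(L)$ restricts to $\pi$ on $O(\tilde{L})$, so Lemma \ref{lem:tits-to_weyl} gives $\pi(\tilde{W}_L) = W_L$, whence $\tilde{W}_L \subseteq \pi^{-1}(W_L)$. Both subgroup schemes fit into extensions of the constant group $W_L$ by $\Hom(L, \mu_2)$, so each is finite flat over $\bZ$ of $\bZ$-module rank $|W_L| \cdot 2^{\rank L}$; the closed immersion $\tilde{W}_L \hookrightarrow \pi^{-1}(W_L)$ between finite flat group schemes of the same rank is therefore an isomorphism. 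Finally, $W_L$ is normal in $O(L)$ since any orthogonal transformation conjugates reflection in a norm 2 vector to reflection in the image vector, so pulling back via $\pi$ yields that $\tilde{W}_L = \pi^{-1}(W_L)$ is normal in $O(\tilde{L})$, and the third isomorphism theorem gives $O(\tilde{L})/\tilde{W}_L \cong O(L)/W_L$.

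The hardest part is the direct verification that each $n_\alpha$ acts on $V_L$ through $O(\tilde{L})$, which requires careful bookkeeping of the cocycle signs from $\hat{L}$. The rest of the argument is formal scheme theory once this computation is in hand.
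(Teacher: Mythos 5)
Your proposal is correct, but it takes a genuinely different route from the paper's. The paper never verifies generator-by-generator that $\tilde{W}_L \subseteq O(\tilde{L})$; instead it assembles the extensions $1 \to \Hom(L,\mu_2) \to \tilde{W}_L \to W_L \to 1$ and $1 \to T_L \to N_{G_L}T_L \to W_L \to 1$ (Lemma \ref{lem:tits-to_weyl}), $1 \to T_L \to N_{\Aut V_L}T_L \to O(L) \to 1$ (Propositions \ref{prop:normalizer-as-product} and \ref{prop:centralizer}), and $1 \to \Hom(L,\mu_2) \to O(\tilde{L}) \to O(L) \to 1$ (Proposition \ref{prop:structure-of-orthogonal-transformation-group}), and then identifies $\tilde{W}_L$ with the pullback $O(\tilde{L}) \times_{N_{\Aut V_L}T_L} N_{G_L}T_L$ by a purely formal diagram argument, from which normality and $O(\tilde{L})/\tilde{W}_L \cong O(L)/W_L$ fall out. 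Your replacement of the pullback identification by ``containment plus equal rank'' is sound: both $\tilde{W}_L$ and $\pi^{-1}(W_L)$ are closed subschemes of $\Aut V_L$, so the subfunctor containment is a closed immersion, and a surjection of finite free $\bZ$-algebras of equal rank is an isomorphism; deducing normality from $W_L \trianglelefteq O(L)$ by taking preimages is likewise fine (the paper gets it from the same fact, packaged in the diagram). What your route buys is an explicit proof of the containment $\tilde{W}_L \subseteq O(\tilde{L})$, which the paper absorbs rather tersely into the universal property of the pullback; what the paper's route buys is the avoidance of any computation with the reflection lifts $n_\alpha$.

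Two points to tighten, neither fatal. First, in the direct verification you do not actually need the cocycle-sign bookkeeping you flag as the hardest part: $n_\alpha$ preserves the weight grading and carries $\pi^L_{\lambda}$ to $\pi^L_{s_\alpha\lambda}$ (Lemma \ref{lem:normalizer-action-compatible}), and the lowest-weight spaces are free of rank one over $\bZ$, so $n_\alpha\,\iota(e_\lambda) = \pm\iota(e_{s_\alpha\lambda})$ automatically; compatibility of these signs with the $\hat{L}$-multiplication is also automatic, since the products $\iota(e_\lambda)\iota(e_\mu)$ appear as coefficients of vertex operators and $n_\alpha$ is a vertex algebra automorphism. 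The step that does need stating is that the resulting element of $O(\tilde{L})(\bZ)$, acting through Proposition \ref{prop:OL-action-on-VOA}, \emph{coincides with $n_\alpha$ as an automorphism of $V_L$} (this is what membership in the subscheme $O(\tilde{L}) \subseteq \Aut V_L$ means); this follows because the two automorphisms agree on the generating set $\{\iota(e_{\pm\gamma_i})\}$ used in Proposition \ref{prop:aut-represented-by-affine-finite-type}. Second, your rank count for $\tilde{W}_L$ (exactly $|W_L|$ components, each a $\Hom(L,\mu_2)$-torsor) implicitly uses that two points of $N_{G_L}(T_L)(\bZ)$ over the same Weyl element differ by an element of $T_L(\bZ) = \Hom(L,\{\pm 1\}) = T_L[2](\bZ)$, so their $T_L[2]$-translates coincide; the same observation is needed to see that the subgroup generated by your $n_\alpha$'s and $T_L[2]$ agrees with the paper's definition of $\tilde{W}_L$ via $N_{G_L}(T_L)(\bZ)$-translates. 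With those lines added, both your main argument and the DN98-plus-flat-closure alternative (which is legitimate: $\tilde{W}_L$ is flat over $\bZ$, hence the closure of its generic fiber, and $O(\tilde{L})$ is closed) go through.
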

\begin{proof}
Lemma \ref{lem:tits-to_weyl} gives us the following commutative diagram of extensions, where the vertical arrows are inclusions of normal subgroups, and the squares on the top left and bottom right are pullbacks:

\[ \xymatrix{ 1 \ar[r] & \Hom(L,\mu_2) \ar[r] \ar[d] & \tilde{W}_L \ar[r] \ar[d] & W_L \ar@{=}[d] \ar[r] & 1\\
1 \ar[r] & T_L \ar[r] \ar@{=}[d]  & N_{G_L} T_L \ar[r] \ar[d] & W_L \ar[d] \ar[r] & 1 \\
1 \ar[r] & T_L \ar[r] & N_{\Aut V_L} T_L \ar[r] & O(L) \ar[r] & 1.} \]

By Lemma \ref{lem:OL-normalizes-torus}, $O(\tilde{L})$ normalizes $T_L$, i.e., we have an embedding $O(\tilde{L}) \hookrightarrow N_{\Aut V_L} T_L$.  By Proposition \ref{prop:structure-of-orthogonal-transformation-group}, restriction of $N_{\Aut V_L} T_L \to N_{\Aut V_L} T_L/T_L = O(L)$ to $O(\tilde{L})$ is a surjective homomorphism to $O(L)$, with kernel $\Hom(L,\mu_2)$.  We therefore have a commutative diagram of extensions, where the vertical arrows are inclusions of normal subgroups, and the left square is a pullback:

\[ \xymatrix{ 1 \ar[r] & \Hom(L,\mu_2) \ar[r] \ar[d]  & O(\tilde{L}) \ar[r] \ar[d] & O(L) \ar@{=}[d] \ar[r] & 1 \\
1 \ar[r] & T_L \ar[r] & N_{\Aut V_L} T_L \ar[r] & O(L) \ar[r] & 1.} \]

By the universal property of pullback, this diagram fits into the pullback diagram along the embeddings $N_{G_L} T_L \to N_{\Aut V_L} T_L$ and $O(\tilde{L}) \to N_{\Aut V_L} T_L$ as follows:

\[ \xymatrix @-1pc { 1 \ar[rr] & & \Hom(L,\mu_2) \ar[rr] \ar@{=}[dd]|\hole \ar[rd] & & \tilde{W}_L \ar[rr] \ar[dd]|\hole \ar[rd] & & W_L \ar[dd]|\hole \ar[rr] \ar@{=}[rd] & & 1 \\
& 1 \ar[rr] & & T_L \ar[rr] \ar@{=}[dd]  & & N_{G_L} T_L \ar[rr] \ar[dd] & & W_L \ar[dd] \ar[rr] & & 1 \\
1 \ar[rr] & & \Hom(L,\mu_2) \ar[rr] \ar[rd]  & & O(\tilde{L}) \ar[rr] \ar[rd] & & O(L) \ar@{=}[rd] \ar[rr] & & 1 \\
& 1 \ar[rr] & & T_L \ar[rr] & & N_{\Aut V_L} T_L \ar[rr] & & O(L) \ar[rr] & & 1,} \]
where the vertical and diagonal arrows are inclusions of normal subgroups, and the quadrilaterals without equal signs are pullbacks.  From the upper-right pullback rectangle, we obtain an isomorphism $O(\tilde{L})/\tilde{W}_L \simeq O(L)/W_L$.
\end{proof}

\subsection{Main Theorem}

\begin{thm} \label{thm:main}
Let $L$ be a positive definite even lattice, and let $V_L$ be the graded vertex algebra over $\bZ$ attached to $L$.  Then, the functor that takes an affine scheme $\Spec R$ to the group of homogeneous $R$-vertex algebra automorphisms of $V_{L,R}$ is represented by a finite type smooth affine group scheme $\Aut V_L$ over $\bZ$.  This group is given as a product $G_L O(\tilde{L})$, where the normal subgroup $G_L$ is the split reductive group given in Definition \ref{defn:GL}, and $O(\tilde{L})$ is the finite flat group scheme defined in Proposition \ref{prop:structure-of-orthogonal-transformation-group}.  The intersection of $G_L$ and $O(\tilde{L})$ is the Tits subgroup of the normalizer of the torus $T_L$ in $G_L$, i.e., the canonical elementary 2-torsion extension of the Weyl group.  The quotient $\Aut V_L/G_L$ is isomorphic to the outer automorphism group $\Aut L/W_L$ of $L$.  Furthermore, the functor that takes an affine scheme $\Spec R$, where $\det L$ is invertible in $R$, to the group of $R$-vertex operator algebra automorphisms of $V_{L,R}$ is represented by the base change $(\Aut V_L)_{\bZ[1/\det L]}$.
\end{thm}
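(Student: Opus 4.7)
The plan is to assemble the results of the preceding subsections into the statement of the theorem. Representability of $\Aut V_L$ by a finite type affine group scheme over $\bZ$ is already supplied by Proposition \ref{prop:aut-represented-by-affine-finite-type}, so what remains is the product decomposition, the computation of $G_L \cap O(\tilde{L})$, the identification of the quotient, smoothness, and the VOA comparison. For the product decomposition $\Aut V_L = G_L \cdot O(\tilde{L})$, I would chain Corollary \ref{cor:product-with-normalizer}, which gives $\Aut V_L = G_L \cdot N_{\Aut V_L} T_L$ via \'etale-local conjugation of maximal tori, with Proposition \ref{prop:normalizer-as-product} and Proposition \ref{prop:centralizer}, which together give $N_{\Aut V_L} T_L = O(\tilde{L}) \cdot T_L$. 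Since $T_L \subset G_L$, the factor $T_L$ is absorbed into $G_L$, so the multiplication map $G_L \times O(\tilde{L}) \to \Aut V_L$ is surjective as a morphism of \'etale sheaves.

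To identify $G_L \cap O(\tilde{L})$, I would observe that any element of this intersection lies in $O(\tilde{L}) \subset N_{\Aut V_L} T_L$ and also in $G_L$, so it lies in $N_{G_L} T_L$. The conjugation homomorphism $c$ sends $N_{G_L} T_L$ onto $W_L$ by Lemma \ref{lem:tits-to_weyl}, so the image of the element in $O(L)$ lies in $W_L$, placing the element in the preimage $\tilde{W}_L$ described in Lemma \ref{lem:tits-embedding-outer}. The reverse inclusion is automatic, since $\tilde{W}_L$ was constructed as a subgroup simultaneously of $N_{G_L} T_L$ and of $O(\tilde{L})$. A standard isomorphism-theorem computation then yields $\Aut V_L / G_L \cong O(\tilde{L}) / \tilde{W}_L \cong O(L)/W_L$, where the second isomorphism again invokes Lemma \ref{lem:tits-embedding-outer}.

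Since $L$ is positive definite, $O(L)/W_L$ is a finite constant group scheme over $\bZ$, hence \'etale. The resulting exact sequence $1 \to G_L \to \Aut V_L \to O(L)/W_L \to 1$ presents $\Aut V_L$ as a $G_L$-torsor over a finite \'etale base, so $\Aut V_L$ is \'etale-locally a disjoint union of copies of $G_L$, and smoothness of $\Aut V_L$ over $\bZ$ follows from smoothness of $G_L$ established in Lemma \ref{lem:basic-properties-of-GL}. The vertex operator algebra assertion is then immediate from Corollary \ref{cor:homogeneous-aut-VOA-aut}: when $\det L$ is invertible in $R$, homogeneous vertex algebra automorphisms coincide with VOA automorphisms, so base change of $\Aut V_L$ to $\bZ[1/\det L]$ represents the VOA automorphism functor. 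The main bookkeeping obstacle will be ensuring that the two embeddings of $\tilde{W}_L$ --- one into $G_L$ via $N_{G_L} T_L$, and one into $O(\tilde{L})$ as the preimage of $W_L$ --- coincide inside $\Aut V_L$, but this compatibility is exactly what the pullback diagram in Lemma \ref{lem:tits-embedding-outer} encodes.
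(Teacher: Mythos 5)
Your proposal is correct and takes essentially the same route as the paper's proof: the same chain of Corollary \ref{cor:product-with-normalizer} with Propositions \ref{prop:normalizer-as-product} and \ref{prop:centralizer} for the product decomposition, the same reduction of the intersection to $N_{G_L}T_L$ and pullback to $\tilde{W}_L$ via Lemmas \ref{lem:tits-to_weyl} and \ref{lem:tits-embedding-outer}, the same second-isomorphism-theorem computation of the quotient, and Corollary \ref{cor:homogeneous-aut-VOA-aut} for the vertex operator algebra statement. Your torsor argument for smoothness simply spells out in more detail the paper's one-line observation that $\Aut V_L/G_L$ is a quotient of the constant group scheme $O(L)$, hence \'etale, so smoothness descends from $G_L$.
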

\begin{proof}
The finite type affine property over $\Spec \bZ$ is Proposition \ref{prop:aut-represented-by-affine-finite-type}.  The normality of $G_L$ is Proposition \ref{prop:normality}.  Smoothness follows from the fact that $\Aut V_L/G_L$ is a quotient of the constant group scheme $O(L)$.

The product property follows from the following sequence of reductions: By Corollary \ref{cor:product-with-normalizer}, $\Aut V_L = G_L N_{\Aut V_L} T_L$, but by combining Propositions \ref{prop:normalizer-as-product} and \ref{prop:centralizer}, we have $N_{\Aut V_L} T_L = T_L O(\tilde{L})$, and the $T_L$ can be absorbed into the $G_L$.

We now show that the intersection of $G_L$ and $O(\tilde{L})$ is the Tits subgroup $\tilde{W}_L$.  By Lemma \ref{lem:tits-embedding-outer}, $\tilde{W}_L$ is contained in the intersection, so it suffices to show the reverse inclusion.  Since $O(\tilde{L})$ normalizes $T_L$, the intersection is contained in $N_{G_L}(T_L)$.  Taking the quotient by $T_L$, we find that the subgroup $O(\tilde{L})$ is mapped to $O(L)$, and the intersection is taken to $W_L$.  Thus, the intersection is contained in the pullback of $W_L$ in $O(\tilde{L})$, which is precisely the Tits group by Lemma \ref{lem:tits-embedding-outer}.

From the product property and the ``second isomorphism theorem'' for groups, the quotient $\Aut V_L/G_L = O(\tilde{L})G_L/G_L$ is isomorphic to the quotient of $O(\tilde{L})$ by the intersection $\tilde{W}_L$, and by Lemma \ref{lem:tits-embedding-outer}, this is the outer automorphism group of $L$.

The last claim follows immediately from Corollary \ref{cor:homogeneous-aut-VOA-aut}.
\end{proof}

\section{Additional questions}

It is natural to ask how these results can be generalized:
\begin{enumerate}
\item If we remove the evenness condition on $L$, there is an analogous construction of a vertex operator superalgebra $V_L$, and its automorphisms form a super group scheme. It would be nice to have a characterization of the groups we get.
\item If we remove the integrality condition from $L$, there is still an analogous construction of an abelian intertwining algebra \cite{DL93}.  Once again, we can ask about the symmetries of this object.
\item If we remove the condition that $L$ is positive definite, we obtain a conformal vertex algebra whose infinitesimal symmetries typically form an infinite dimensional Lie algebra that is highly non-integrable.  The standard example is the even unimodular lorentzian lattice $I\!I_{1,25}$.  The symmetries of the corresponding conformal vertex algebra have a lot of structure: The discrete symmetries of $V_{I\!I_{1,25}}$ include a $\mu_2^{26}$ cover of the automorphism group of the lattice, and the fake monster Lie algebra \cite{B90} is a subquotient of the infinitesimal symmetries.  The automorphism group of the lattice contains a huge reflection group whose Dynkin diagram is the Leech lattice $\Lambda$, and its outer automorphism group is $Co_\infty = \Lambda\cdot Co_0$ \cite{C83}.
\item If we remove the condition that symmetries must preserve the $\bZ$-grading, we do not get new automorphisms over $\mathbb{R}$ (see \cite{M18} Theorem 7.1), but over other rings, this question has not been studied.
\item It is of course natural to move beyond lattices, since many other vertex algebras that admit forms over small rings have interesting symmetry groups.  The basic example is $V^\natural_\bZ$, constructed in \cite{C17}, where the automorphism group scheme is the constant monster group.  A non-constant physically relevant example is, $V^\natural_{\bZ} \otimes V_{I\!I_{1,1}}$, which has discrete symmetries that include the monster simple group, algebraic group symmetries that include $GL_2$, and infinitesimal symmetries that produce the monster Lie algebra as a subquotient.  Examples related to affine Kac-Moody Lie algebras and W-algebras may be especially interesting.
\end{enumerate}

\end{document}